\documentclass[11pt]{amsart}  
%
%
\usepackage{amssymb}
\usepackage{amsfonts}
\usepackage{amsthm}
\usepackage{amsthm}
\usepackage{amscd}
\numberwithin{equation}{section}
\overfullrule=0pt \theoremstyle{plain}
\newtheorem{theorem}{Theorem}[section]

\newtheorem{proposition}[theorem]{Proposition}
\newtheorem{corollary}[theorem]{Corollary}
\newtheorem{lemma}[theorem]{Lemma}
\theoremstyle{definition}
\newtheorem{definition}[theorem]{Definition}
\theoremstyle{remark}
\newtheorem*{remark}{Remark}
\newtheorem*{example}{Example}
%
\textwidth15.0cm \textheight=20.0truecm \hoffset-1.5cm
\headsep20pt \voffset-1cm
%
\begin{document}
\newcommand{\M}{\mathcal{M}_{g,N+1}^{(1)}}
\newcommand{\Teich}{\mathcal{T}_{g,N+1}^{(1)}}
\newcommand{\T}{\mathrm{T}}
\newcommand{\corr}{\bf}
\newcommand{\vac}{|0\rangle}
\newcommand{\Ga}{\Gamma}
\newcommand{\new}{\bf}
\newcommand{\define}{\def}
\newcommand{\redefine}{\def}
\newcommand{\Cal}[1]{\mathcal{#1}}
\renewcommand{\frak}[1]{\mathfrak{{#1}}}
\newcommand{\refE}[1]{(\ref{E:#1})}
\newcommand{\refS}[1]{Section~\ref{S:#1}}
\newcommand{\refSS}[1]{Section~\ref{SS:#1}}
\newcommand{\refT}[1]{Theorem~\ref{T:#1}}
\newcommand{\refO}[1]{Observation~\ref{O:#1}}
\newcommand{\refP}[1]{Proposition~\ref{P:#1}}
\newcommand{\refD}[1]{Definition~\ref{D:#1}}
\newcommand{\refC}[1]{Corollary~\ref{C:#1}}
\newcommand{\refL}[1]{Lemma~\ref{L:#1}}
\newcommand{\R}{\ensuremath{\mathbb{R}}}
\newcommand{\C}{\ensuremath{\mathbb{C}}}
\newcommand{\N}{\ensuremath{\mathbb{N}}}
\newcommand{\Q}{\ensuremath{\mathbb{Q}}}
\renewcommand{\P}{\ensuremath{\mathbb{P}}}
\newcommand{\Z}{\ensuremath{\mathbb{Z}}}
\newcommand{\kv}{{k^{\vee}}}
\renewcommand{\l}{\lambda}
\newcommand{\gb}{\overline{\mathfrak{g}}}
\newcommand{\g}{\mathfrak{g}}
\newcommand{\gh}{\widehat{\mathfrak{g}}}
\newcommand{\ghN}{\widehat{\mathfrak{g}_{(N)}}}
\newcommand{\gbN}{\overline{\mathfrak{g}_{(N)}}}
\newcommand{\tr}{\mathrm{tr}}
\newcommand{\sln}{\mathfrak{sl}}
\newcommand{\sn}{\mathfrak{s}}
\newcommand{\so}{\mathfrak{so}}
\newcommand{\spn}{\mathfrak{sp}}
\newcommand{\gl}{\mathfrak{gl}}
\newcommand{\slnb}{{\overline{\mathfrak{sl}}}}
\newcommand{\snb}{{\overline{\mathfrak{s}}}}
\newcommand{\sob}{{\overline{\mathfrak{so}}}}
\newcommand{\spnb}{{\overline{\mathfrak{sp}}}}
\newcommand{\glb}{{\overline{\mathfrak{gl}}}}
\newcommand{\Hwft}{\mathcal{H}_{F,\tau}}
\newcommand{\Hwftm}{\mathcal{H}_{F,\tau}^{(m)}}

\newcommand{\car}{{\mathfrak{h}}}    
\newcommand{\bor}{{\mathfrak{b}}}    
\newcommand{\nil}{{\mathfrak{n}}}    
\newcommand{\vp}{{\varphi}}
\newcommand{\bh}{\widehat{\mathfrak{b}}}  
\newcommand{\bb}{\overline{\mathfrak{b}}}  
\newcommand{\Vh}{\widehat{\mathcal V}}
\newcommand{\KZ}{Kniz\-hnik-Zamo\-lod\-chi\-kov}
\newcommand{\TUY}{Tsuchia, Ueno  and Yamada}
\newcommand{\KN} {Kri\-che\-ver-Novi\-kov}
\newcommand{\pN}{\ensuremath{(P_1,P_2,\ldots,P_N)}}
\newcommand{\xN}{\ensuremath{(\xi_1,\xi_2,\ldots,\xi_N)}}
\newcommand{\lN}{\ensuremath{(\lambda_1,\lambda_2,\ldots,\lambda_N)}}
\newcommand{\iN}{\ensuremath{1,\ldots, N}}
\newcommand{\iNf}{\ensuremath{1,\ldots, N,\infty}}

\newcommand{\tb}{\tilde \beta}
\newcommand{\tk}{\tilde \kappa}
\newcommand{\ka}{\kappa}
\renewcommand{\k}{\kappa}

\newcommand{\nw}{\nabla^{(\omega)}}

\newcommand{\Pif} {P_{\infty}}
\newcommand{\Pinf} {P_{\infty}}
\newcommand{\PN}{\ensuremath{\{P_1,P_2,\ldots,P_N\}}}
\newcommand{\PNi}{\ensuremath{\{P_1,P_2,\ldots,P_N,P_\infty\}}}
\newcommand{\Fln}[1][n]{F_{#1}^\lambda}
\newcommand{\tang}{\mathrm{T}}
\newcommand{\Kl}[1][\lambda]{\can^{#1}}
\newcommand{\A}{\mathcal{A}}
\newcommand{\U}{\mathcal{U}}
\newcommand{\V}{\mathcal{V}}
\renewcommand{\O}{\mathcal{O}}
\newcommand{\Ae}{\widehat{\mathcal{A}}}
\newcommand{\Ah}{\widehat{\mathcal{A}}}
\newcommand{\La}{\mathcal{L}}
\newcommand{\Le}{\widehat{\mathcal{L}}}
\newcommand{\Lh}{\widehat{\mathcal{L}}}
\newcommand{\eh}{\widehat{e}}
\newcommand{\Da}{\mathcal{D}}
\newcommand{\kndual}[2]{\langle #1,#2\rangle}
\newcommand{\cins}{\frac 1{2\pi\mathrm{i}}\int_{C_S}}
\newcommand{\cinsl}{\frac 1{24\pi\mathrm{i}}\int_{C_S}}
\newcommand{\cinc}[1]{\frac 1{2\pi\mathrm{i}}\int_{#1}}
\newcommand{\cintl}[1]{\frac 1{24\pi\mathrm{i}}\int_{#1 }}
\newcommand{\w}{\omega}
\newcommand{\ord}{\operatorname{ord}}
\newcommand{\res}{\operatorname{res}}
\newcommand{\nord}[1]{:\mkern-5mu{#1}\mkern-5mu:}
\newcommand{\Fn}[1][\lambda]{\mathcal{F}^{#1}}
\newcommand{\Fl}[1][\lambda]{\mathcal{F}^{#1}}
\renewcommand{\Re}{\mathrm{Re}}

\newcommand{\ha}{H^\alpha}
\newcommand{\al}{\alpha}
\newcommand{\be}{\beta}

\define\ldot{\hskip 1pt.\hskip 1pt}
\define\ifft{\qquad\text{if and only if}\qquad}
\define\a{\alpha}
\redefine\d{\delta}
\define\w{\omega}
\define\ep{\epsilon}
\redefine\b{\beta}
\redefine\t{\tau}
\redefine\i{{\,\mathrm{i}}\,}
\define\ga{\gamma}
\define\cint #1{\frac 1{2\pi\i}\int_{C_{#1}}}
\define\cintta{\frac 1{2\pi\i}\int_{C_{\tau}}}
\define\cintt{\frac 1{2\pi\i}\oint_{C}}
\define\cinttp{\frac 1{2\pi\i}\int_{C_{\tau'}}}
\define\cinto{\frac 1{2\pi\i}\int_{C_{0}}}
\define\cinttt{\frac 1{24\pi\i}\int_C}
\define\cintd{\frac 1{(2\pi \i)^2}\iint\limits_{C_{\tau}\,C_{\tau'}}}
\define\cintdr{\frac 1{(2\pi \i)^3}\int_{C_{\tau}}\int_{C_{\tau'}}
\int_{C_{\tau''}}}
\define\im{\operatorname{Im}}
\define\re{\operatorname{Re}}
\define\res{\operatorname{res}}
\redefine\deg{\operatornamewithlimits{deg}}
\define\ord{\operatorname{ord}}
\define\rank{\operatorname{rank}}
\define\fpz{\frac {d }{dz}}
\define\dzl{\,{dz}^\l}
\define\pfz#1{\frac {d#1}{dz}}

\define\K{\Cal K}
\define\U{\Cal U}
\redefine\O{\Cal O}
\define\He{\text{\rm H}^1}
\redefine\H{{\mathrm{H}}}
\define\Ho{\text{\rm H}^0}
\define\A{\Cal A}
\define\Do{\Cal D^{1}}
\define\Dh{\widehat{\mathcal{D}}^{1}}
\redefine\L{\Cal L}
\redefine\D{\Cal D^{1}}
\define\KN {Kri\-che\-ver-Novi\-kov}
\define\Pif {{P_{\infty}}}
\define\Uif {{U_{\infty}}}
\define\Uifs {{U_{\infty}^*}}
\define\KM {Kac-Moody}
\define\Fln{\Cal F^\lambda_n}
\define\gb{\overline{\mathfrak{ g}}}
\define\G{\overline{\mathfrak{ g}}}
\define\Gb{\overline{\mathfrak{ g}}}
\redefine\g{\mathfrak{ g}}
\define\Gh{\widehat{\mathfrak{ g}}}
\define\gh{\widehat{\mathfrak{ g}}}
\define\Ah{\widehat{\Cal A}}
\define\Lh{\widehat{\Cal L}}
\define\Ugh{\Cal U(\Gh)}
\define\Xh{\hat X}
\define\Tld{...}
\define\iN{i=1,\ldots,N}
\define\iNi{i=1,\ldots,N,\infty}
\define\pN{p=1,\ldots,N}
\define\pNi{p=1,\ldots,N,\infty}
\define\de{\delta}

\define\kndual#1#2{\langle #1,#2\rangle}
\define \nord #1{:\mkern-5mu{#1}\mkern-5mu:}
\define \sinf{{\widehat{\sigma}}_\infty}
\define\Wt{\widetilde{W}}
\define\St{\widetilde{S}}
\define\Wn{W^{(1)}}
\define\Wtn{\widetilde{W}^{(1)}}
\define\btn{\tilde b^{(1)}}
\define\bt{\tilde b}
\define\bn{b^{(1)}}
%
\define\eps{\varepsilon}    
\define\doint{({\frac 1{2\pi\i}})^2\oint\limits _{C_0}
       \oint\limits _{C_0}}                            
\define\noint{ {\frac 1{2\pi\i}} \oint}   
\define \fh{{\frak h}}     
\define \fg{{\frak g}}     
\define \GKN{{\Cal G}}   
\define \gaff{{\hat\frak g}}   
\define\V{\Cal V}
\define \ms{{\Cal M}_{g,N}} 
\define \mse{{\Cal M}_{g,N+1}} 
\define \tOmega{\Tilde\Omega}
\define \tw{\Tilde\omega}
\define \hw{\hat\omega}
\define \s{\sigma}
\define \car{{\frak h}}    
\define \bor{{\frak b}}    
\define \nil{{\frak n}}    
\define \vp{{\varphi}}
\define\bh{\widehat{\frak b}}  
\define\bb{\overline{\frak b}}  
\define\Vh{\widehat V}
\define\KZ{Knizhnik-Zamolodchikov}
\define\ai{{\alpha(i)}}
\define\ak{{\alpha(k)}}
\define\aj{{\alpha(j)}}
\newcommand{\laxgl}{\overline{\mathfrak{gl}}}
\newcommand{\laxsl}{\overline{\mathfrak{sl}}}
\newcommand{\laxs}{\overline{\mathfrak{s}}}
\newcommand{\laxg}{\overline{\frak g}}
\newcommand{\bgl}{\laxgl(n)}
\newcommand{\tX}{\widetilde{X}}
\newcommand{\tY}{\widetilde{Y}}
\newcommand{\tZ}{\widetilde{Z}}

\title[Multipoint Lax operator algebras]
     {Multipoint Lax operator algebras.
\\
  Almost-graded structure
\\
and central extensions}
\thanks{This work was partially supported by the IRP
GEOMQ11  of the University of Luxembourg.}
\author[M. Schlichenmaier]{Martin Schlichenmaier}
\address[Martin Schlichenmaier]
{%
University of Luxembourg\\                                           
Mathematics Research Unit, FSTC\\                                    
Campus Kirchberg\\ 6, rue Coudenhove-Kalergi,
L-1359 Luxembourg-Kirchberg\\ Luxembourg
}
\email{martin.schlichenmaier@uni.lu}
\begin{abstract}
Recently,  Lax operator algebras 
appeared  as a new  class of higher genus current type
algebras. Based on I.~Krichever's theory of Lax
operators on algebraic curves they were introduced by 
I.~Kri\-che\-ver and
O.~Sheinman. These algebras  are almost-graded Lie
algebras of currents on Riemann surfaces with marked points
(in-points, out-points, and Tyurin points). In a previous
joint article of the author with Sheinman
 the local cocycles and associated
almost-graded central extensions are classified in the case of one
in-point and one out-point. It was shown that the almost-graded
extension is essentially unique. In this article  the
general case of Lax operator algebras corresponding to several
in- and out-points is considered. 
In a first step it is shown that they are almost-graded. The grading
is given by the splitting of the marked points 
which are non-Tyurin points into in- and out-points.
Next, classification results both for local and bounded cocycles are shown. 
The uniqueness theorem for almost-graded central extensions
follows.
For this  generalization additional techniques
are needed which are presented in this article.
\end{abstract}
\subjclass[2000]{17B65, 17B67, 17B80,
14H55,  14H70, 30F30, 81R10, 81T40}
\keywords{
infinite-dimensional
Lie algebras, current algebras, Krichever Novikov type algebras,
central extensions, Lie algebra cohomology, integrable systems}
\date{April 11, 2013}
\maketitle
\tableofcontents
\section{Introduction}\label{S:intro}
Lax operator algebras are a recently introduced new class of
current type Lie algebras. In their full generality they were
introduced by Krichever and Sheinman in \cite{KSlax}. There the
concept of Lax operators on algebraic curves, as considered by
Krichever in \cite{Klax}, was generalized to $\g$-valued Lax operators,
where $\g$ is a classical complex Lie algebra. 
Krichever \cite{Klax} extended the conventional Lax operator
representation with a rational parameter to the case of
algebraic curves of arbitrary genus. Such generalizations of Lax
operators
appear in many fields. They are closely related to
integrable
systems (Krichever-Novikov equations on elliptic curves, elliptic
Calogero-Moser systems, Baker-Akhieser functions), see \cite{Klax},
\cite{Kiso}.
Another important application appears in the context of  moduli spaces
of bundles. In particular, they are related to 
Tyurin's result on the classification of framed semi-stable holomorphic
vector bundles on algebraic curves \cite{Tyvb}.
The classification 
uses {\it Tyurin parameters} of such
bundles, consisting of points $\ga_s$ ($s=1,\ldots ,ng$), and
associated elements $\a_s\in\P^{n-1}(C)$ (where $g$ denotes the genus
of the Riemann surface $\Sigma$, and $n$ corresponds to the rank of
the  bundle).
In the following I will not make any reference to these applications.
Beside the above mentioned work the reader might refer to 
Sheinman \cite{ShLax}, \cite{SheBo} for more background
in the case of integrable systems.

\smallskip

Here I will concentrate on the mathematical structure of these
algebras. 
Lax operator algebras are infinite dimensional Lie algebras of
geometric
origin and are interesting mathematical objects.
In contrast to the classical genus zero algebras, appearing in
Conformal Field Theory, they are not graded anymore. 
In this article we will 
introduce an almost-graded structure 
(see \refD{almgrad}) for them. Such an almost-grading
will be an indispensable tool. A crucial task for such
infinite dimensional Lie algebras is the construction and
classification of central extensions. 
This is done in the article. We will concentrate on such central
extensions for which the almost-grading can be extended.

In certain respect the Lax operator algebras can be considered as
generalizations of the higher-genus Krichever-Novikov type current 
and affine algebras, see \cite{KNFa}, \cite{Shea}, \cite{Sha}, \cite{SSS},
\cite{SHab},
\cite{Saff}. 
They themselves are generalizations of the classical
affine Lie algebras as e.g. introduced by 
Kac \cite{Kac68}, \cite{KacB} and Moody \cite{Moody69}.

This article extends the results on the two-point case (see the 
next paragraph for its definition) to the multi-point case. As far as 
the
almost-grading in the two-point case 
is concerned, see Krichever and Sheinman \cite{KSlax}. For the central
extensions in the two-point case 
see the joint work of the author with Sheinman \cite{SSlax}.

\smallskip
To describe  the obtained results we first have to give 
a rough description of the setup. Full details will be given in 
\refS{algebras}.
Let $\g$ be one of the classical Lie algebras
\footnote{As far as $G_2$ is  concerned see 
the recent preprint of Sheinman \cite{Shg2}, and the remark at the end of the
introduction.}
$\gl(n),\sln(n),\so(n),\spn(n)$ over $\C$
and $\Sigma$ a compact Riemann surface. Let $A$ be a finite set
of points of $\Sigma$ divided into two disjoint
non-empty subsets $I$ and $O$. Furthermore, let $W$
be another finite set of points (called weak singular points).
Our Lax operator algebra consists of meromorphic functions
$\ \Sigma\to\g$, holomorphic outside of $W\cup A$ with possibly  poles of
order 1 (resp. of order  2 for $\spn(n)$) at the points in $W$ and certain
additional conditions, depending on $\g$,  on the  
Laurent series expansion there (see e.g. \refE{gldef}).
It turns out \cite{KSlax} that due to the additional condition
this set of matrix-valued functions closes to a Lie algebra $\gb$ under
the point-wise commutator.
In case that $W=\emptyset$ then $\gb$ will be the 
Krichever-Novikov type current algebra (associated to this special
finite-dimensional Lie algebras).
They were extensively studied by Krichever and Novikov, Sheinman, and
Schlichenmaier
 see e.g.
\cite{KNFa}, \cite{Shea}, \cite{Sha}, \cite{ShN65},
\cite{SHab}, \cite{SSS}, \cite{Saff}.
It has to be pointed out that the Krichever-Novikov type algebras can
be defined for all finite-dimensional Lie algebras $\g$.

If furthermore, the genus of the Riemann surface is zero and $A$
consists only of two points, which we might assume to be $\{0\}$ and
$\{\infty\}$, then the algebras will be the usual classical
current algebras. These classical algebras are graded algebras. Such
a grading is used e.g. to introduce highest weight representations,
Verma modules, Fock spaces,  and to classify these representations. 
Unfortunately, the algebras which we consider here will not be
graded. But they admit an almost-grading, see \refD{almgrad}.
As was realized by Krichever and Novikov \cite{KNFa}
for most applications it is a valuable replacement for the 
grading.
They also gave a method how to introduce it for the two-point algebras
of Krichever-Novikov type.

For the multi-point case of the Krichever-Novikov type algebras such
an almost-grading was given by the author
\cite{SDiss}, \cite{SLc}, \cite{Saff}, \cite{Schknrev},
\cite{Schlknbook},
see also \cite{Sad}.
The crucial point is that the almost-grading will depend on the
splitting
of $A$ into $I$ and $O$. Different splittings will give
different almost-gradings. Hence, the multi-point
case is more involved than he two-point case. 

For the Krichever-Novikov current algebra $\gb$ the grading
comes from the grading of the function algebra (to be found in the
above cited works of the author). 
This is due to the fact, that they are tensor products.
If $W\ne\emptyset$ the Lax operator algebras are not tensor products
anymore and their almost-grading has to be constructed
directly. This has been done in the two point case by Krichever and
Sheinman
\cite{KSlax}.

\smallskip
Our first result in this article is to introduce an almost-grading
of $\gb$
for the arbitrary multi-point case. As mentioned above, it will depend
in an essential way on the splitting of $A$ into $I\cup O$. 
This is done in \refS{alm}.
The construction is much more involved than in the two-point case.

\smallskip
Our second goal is to study central extensions $\gh$ of the Lax
operator 
algebras $\gb$. It is well-known that central extensions are given by
Lie algebra two-cocycles of $\gb$ with values in the trivial module
$\C$.
Equivalence classes of central extensions are in 1:1 correspondence to the 
elements of the Lie algebra cohomology space $\H^2(\gb,\C)$. Whereas
for the classical current algebras associated to a finite-dimensional
simple Lie algebra $\g$ the extension class will be unique 
this is not the case anymore for higher genus
and even for genus zero in the multi-point case.
But we are interested only in central extensions $\gh$ which allow
us to extend the almost-grading of $\gb$.
This reduces the possibilities. The condition for the
cocycle defining the central extension
will be that it is  {\it local} (see \refE{cloc}) with
respect to the almost-grading given by the splitting $A=I\cup O$.
Hence, which cocycles will be local will depend on the splitting
as well.

If $\g$ is simple then the space of local cohomology classes for $\gb$
will be one-dimensional.
For $\gl(n)$ we have to add another natural property for the
cocycle meaning that it is invariant under the action of 
the vector field algebra $\La$ (see \refE{linv}). 
In this case the space of local and $\L$-invariant cocycle classes
will be two-dimensional.

The action of the vector field algebra $\La$ on $\gb$
is given 
in terms  of a certain  connection
$\nabla^{(\omega)}$, see \refS{Lmod}. 
With the help of the connection we can define
geometric cocycles 
\begin{align}
\ga_{1,\w,C}(L,L')&= \cinc {C} \tr(L\cdot \nabla^{(\w)}L'),
\\
\label{E:ig2} \ga_{2,\w,C}(L,L')&= \cinc{C} \tr(L)\cdot
\tr(\nabla^{(\w)}L'),
\end{align}
where $C$ is an arbitrary cycle on
$\Sigma$ avoiding the points of possible singularities.
The cocycle  $\ga_{2,\w,C}$ will only be different from zero 
in the $\gl(n)$ case.

Special integration paths are circles $C_i$ around the points
in $I$, resp. around the points in $O$, and a path $C_S$ separating the
points in $I$ form the points in $O$.

Our main result is \refT{main} about uniqueness of local cocycles classes
and that the cocycles are given by integrating along $C_S$.
The proof presented in \refS{class} is based on \refT{bounded} which 
gives the 
classification of bounded (from above) cohomology classes (see 
\refE{bounded}).
The bounded cohomology classes constitute a subspace of dimension $N$, 
(resp. $2N$ for $\gl(n)$) where $N=\#I$ and the integration is
done over the $C_i, i=1,\ldots, N$.

The proof of \refT{bounded} is given in \refS{induction} and \refS{direct}. 
We use recursive techniques as developed in
\cite{Scocyc} and \cite{Saff}.
Using  the boundedness and        
$\L$-invariance we  show that such a  cocycle is given by its values  
at  pairs of homogeneous elements for which the sum of their    
degrees is equal to zero. Furthermore, we show that an            
$\L$-invariant and bounded cocycle will be uniquely fixed by a      
certain finite number of such cocycle values. A more detailed     
analysis shows that the cocycles are of the form claimed.
In  \refS{direct} we show  that  in the
simple Lie algebra case in each bounded  cohomology class there is a 
representing cocycle which is            
$\L$-invariant. 
For this we  use the internal structure of the  Lie algebra $\gb$ related
to the root system of the underlying finite dimensional      
simple Lie algebra $\g$, and                                 
the almost-gradedness of $\gb$. Recall that in the classical case
$\g\otimes\C[z,z^{-1}]$ the algebra is graded. In this very      
special case the  chain of arguments gets simpler and is similar 
to the arguments of  Garland \cite{Gar}.

\medskip

As already mentioned above, in joint work with Sheinman 
\cite{SSlax} the two-point case was considered.
This article extends the result to the multi-point
case. Unfortunately,
it is not an  application of the results of the two-point
case.
In this more general context 
the proofs have to be done anew.
(The two-point case will finally be a special case.)
Only at few places  references to proofs in
\cite{SSlax} can be made.

\medskip
I like to thank Oleg Sheinman for 
extensive discussions which were very helpful during writing
this article.  
After I finished this work he succeeded \cite{Shg2} to give a definition of
a Lax operator algebra for the exceptional Lie algebra $G_2$
in such a way, that all properties and statements presented here 
will also be true in this case.
Hence, there is now another element in the list of
Lax operator algebra associated to simple Lie algebras.
\section{The algebras} 
\label{S:algebras}
\subsection{Lax operator algebras}\label{S:alg}
$ $

Let $\g$ be  one of the classical matrix
algebras $\gl(n)$, $\sln(n)$, $\so(n)$, $\spn(2n)$, or $\sn(n)$,
where the latter  denotes the algebra of scalar matrices.
Our algebras  will consist of certain $\g$-valued meromorphic  
functions, forms, etc, defined on Riemann surfaces with additional
structures (marked points, vectors associated to this points, ...).

To become more precise, 
let $\Sigma$ be a compact Riemann surface of genus $g$
($g$ arbitrary)
and $A$ a finite subset of points in $\Sigma$ divided into
two non-empty disjoint subsets
\begin{equation}
I:=\{P_1,P_2,\ldots,P_N\}, \qquad O:=\{Q_1,Q_2,\ldots,Q_{M}\}
\end{equation}
with $\#A=N+M$.
The points in $I$ are called incoming-points the points in $O$ 
outgoing-points.

To define 
Lax operator algebras we have to fix some
additional data.
Fix $K\in\N_0$ and a collection of points 
\begin{equation}
W:=\{\ga_s\in\Sigma\setminus A\mid s=1,\ldots, K\}.
\end{equation}
We assign to every point $\ga_s$ 
 a vector $\al_s\in\C^n$ 
(resp. from $\C^{2n}$ for  $\spn(2n)$).
The system 
\begin{equation}
\mathcal{T}:=\{(\ga_s,\al_s)\in\Sigma\times \C^n\mid s=1,\ldots, K\}
\end{equation}
is called 
{\it Tyurin data}. 
We will be more general 
than in our earlier joint paper \cite{SSlax} with 
Sheinman, not only in respect
that we allow for $A$ more than two points also that
our $K$ is not bound to be $n\cdot g$.
Even $K=0$ is allowed. In the latter case the Tyurin data will be
empty.
\begin{remark}
For $K=n\cdot g$ 
and for  generic values of $(\ga_s,\a_s)$ with $\a_s\ne 0$ the tuples
of pairs
 $(\ga_s,[\a_s])$ with $[\a_s]\in\P^{n-1}(\C)$
parameterize framed semi-stable rank $n$ and degree $n\, g$ holomorphic
vector bundles as shown by Tyurin \cite{Tyvb}.
Hence, the name Tyurin data.
\end{remark}
We fix local coordinates 
$z_l,\ l=1,\dots, N$  centered at the points $P_l\in I$ 
and $w_s$ centered at $\ga_s$,  $s=1,\ldots, K$.
In fact nothing  will dependent on the choice of 
 $w_s$. This is essentially also true for $z_l$. 
Only its
first jet will be used to normalize certain
basis elements uniquely.

We consider $\g$-valued meromorphic functions
\begin{equation}
L:\ \Sigma\ \to\  \g,
\end{equation}
which are holomorphic outside  $W\cup A$, have at most
poles of order one (resp. of order two for $\spn(2n)$) at the
points in $W$, and fulfill certain conditions at $W$ depending on
$\mathcal{T}$, $A$,  and $\g$. These conditions will be 
described in the following. The singularities at $W$ are called {\it weak
singularities}. These objects were introduced by Krichever
\cite{Klax}  for $\gl(n)$ in the context of Lax operators for
algebraic curves, and further generalized 
by Krichever and Sheinman in \cite{KSlax}.
The conditions are exactly the same as in \cite{SSlax}. But for 
the convenience of the reader we recall them here.

\medskip
\noindent
For {\bf $\gl(n)$} the conditions 
are as follows. 
For $s=1,\ldots, K$ we require that there exist $\be_s\in\C^n$ 
and $\ka_s\in \C$ such that the
function $L$ has the following expansion at $\ga_s\in W$ 
\begin{equation}\label{E:glexp}
L(w_s)=\frac {L_{s,-1}}{w_s}+
L_{s,0}+\sum_{k>0}L_{s,k}{w_s^k},
\end{equation}
with
\begin{equation}\label{E:gldef}
L_{s,-1}=\al_s \be_s^{t},\quad
\tr(L_{s,-1})=\be_s^t \al_s=0,
\quad
L_{s,0}\,\al_s=\ka_s\al_s.
\end{equation}
In particular, if $L_{s,-1}$ is 
non-vanishing then it is a rank 1 matrix, and if 
$\al_s\ne 0$  then it is  
an eigenvector of $L_{s,0}$.

The requirements \refE{gldef} are independent of the chosen
coordinates $w_s$ and  
the set of all such functions constitute an associative algebra under
the point-wise matrix multiplication, see \cite{KSlax}. 
The proof transfers without changes to the multi-point case.
For the convenience of the reader 
and for illustration we will  nevertheless 
recall the
proof in an appendix to this article.
We denote this algebra  by $\glb(n)$.
Of course, it will depend on the Riemann surface $\Sigma$, 
the finite set of points $A$, and the Tyurin data $\T$.
As there should be no confusion, we prefer to avoid
cumbersome notation and will just use  $\glb(n)$.
The same we do for the other Lie algebras.

Note that if one of the $\alpha_s=0$ then 
the conditions at the point $\gamma_s$ correspond to the fact, 
that $L$ has to be
holomorphic there. We can erase the point from the
Tyurin data. Also if $\alpha_s\ne 0$ and $\lambda\in\C, \lambda\ne 0$
then $\alpha$ and $\lambda\alpha$ induce the same conditions
at the point $\gamma_s$. Hence only the projective vector
$[\alpha_s]\in\P^{n-1}(\C)$ plays a role.

The splitting $\gl(n)=\sn(n)\oplus \sln(n)$ given by
\begin{equation}
X\mapsto \left(\ \frac {\tr(X)}{n}I_n\ ,\ X-\frac {\tr(X)}{n}I_n\ \right),
\end{equation}
where $I_n$ is the $n\times n$-unit matrix,
induces a corresponding splitting for the  Lax operator 
algebra  $\glb(n)$:
\begin{equation}\label{E:glsplit}
 \glb(n)=\snb(n)
\oplus \slnb(n).
\end{equation}
For {\bf $\slnb(n)$} the only 
additional condition  is that 
in \refE{glexp} all matrices $L_{s,k}$ are  trace-less.
The conditions \refE{gldef}  remain unchanged.

For {\bf $\snb(n)$} all matrices in \refE{glexp}
are scalar  matrices. This implies that
the corresponding 
$L_{s,-1}$  vanish. In particular, the elements
of $\snb(n)$ are holomorphic at $W$.
Also $L_{s,0}$, as a scalar matrix, has every $\al_s$ 
as eigenvector.
This means that beside the holomorphicity there are no
further conditions.
And we get
$\snb(n)\cong \A$,
where  $\A$ be the (associative) algebra of meromorphic
functions on $\Sigma$ holomorphic outside of $A$.
This is 
the (multi-point) Krichever-Novikov type function algebra.
It will be discussed further down in  \refS{kngrad}.

\medskip

In the case of {\bf $\so(n)$} we require that
all $L_{s,k}$ in \refE{glexp} are  skew-symmetric.
In particular, they are trace-less.
Following \cite{KSlax} the set-up has to be slightly modified.
First only  those Tyurin parameters $\al_s$ are allowed which satisfy
$\al_s^t\al_s=0$.
Then the first requirement in  \refE{gldef} is changed 
to obtain
\begin{equation}\label{E:sodef}
L_{s,-1}=\al_s\be_s^t-\be_s\al_s^t,
\quad
\tr(L_{s,-1})=\be_s^t\al_s=0,
\quad
L_{s,0}\,\al_s=\ka_s\al_s.
\end{equation}

\medskip
For {\bf $\spn(2n)$}
we consider  a symplectic form  $\hat\sigma$  
for $\C^{2n}$ given by
a non-degenerate skew-symmetric matrix $\sigma$.
The Lie algebra $\spn(2n)$ is the Lie algebra of
matrices $X$ such that $X^t\sigma+\sigma X=0$.
The condition $\tr(X)=0$ will be automatic. 
At the weak singularities we have the expansion
\begin{equation}\label{E:glexpsp}
L(z_s)=\frac {L_{s,-2}}{w_s^2}+\frac {L_{s,-1}}{w_s}+
L_{s,0}+L_{s,1}{w_s}+\sum_{k>1}L_{s,k}{w_s^k}.
\end{equation}
The condition \refE{gldef} is  modified as
follows (see \cite{KSlax}):
there exist $\be_s\in\C^{2n}$, 
$\nu_s,\ka_s\in\C$ such that 
\begin{equation}\label{E:spdef}
L_{s,-2}=\nu_s \al_s\al_s^t\sigma,\quad
L_{s,-1}=(\al_s\be_s^t+\be_s\al_s^t)\sigma,
\quad{\be_s}^t\sigma\al_s=0,\quad
L_{s,0}\,\al_s=\kappa_s\al_s.
\end{equation}
{}Moreover, we require
\begin{equation}
\al_s^t\sigma L_{s,1}\al_s=0.
\end{equation}
Again under the point-wise
matrix commutator the set of such maps constitute a Lie algebra.
\begin{theorem}
Let $\gb$ be the 
space 
of Lax operators  associated to $\g$, one of the  
above introduced finite-dimensional
classical Lie algebras. Then  $\gb$ is a Lie 
algebra under the
point-wise matrix commutator. 
For $\gb=\glb(n)$ 
it is an associative 
algebra under point-wise matrix multiplication.
\end{theorem}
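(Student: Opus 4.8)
The plan is to reduce the statement to a purely local verification at each weak singularity $\ga_s$. The global requirements are preserved for free: the pointwise product of two meromorphic $\g$-valued maps is again meromorphic and holomorphic wherever both factors are, and for the commutator the values stay in $\g$ because $\g$ is a Lie algebra. For fixed Tyurin data the set of admissible operators is also a $\C$-vector space, since the conditions \refE{gldef}, \refE{sodef}, \refE{spdef} are shared-$\al_s$ conditions under which the scalars $\ka_s$ and the vectors $\be_s$ simply add. Hence only the behaviour at the points of $W$ needs attention, and as each such condition is local, the computation is identical to the two-point case of \cite{KSlax}; the cardinality of $A$ plays no role whatsoever. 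This is why the multi-point extension is immediate for this theorem, in contrast to the grading and cohomology statements.

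For $\glb(n)$ I would prove closure under the product directly. Fix $s$ and abbreviate $\al=\al_s$, $\be=\be_s$, $\ka=\ka_s$, $w=w_s$, with primed symbols for a second operator $L'$. From \refE{glexp}--\refE{gldef},
\begin{align*}
(LL')_{-2}&=L_{-1}L'_{-1}=\al\be^t\,\al(\be')^t=(\be^t\al)\,\al(\be')^t=0,\\
(LL')_{-1}&=L_{-1}L'_0+L_0L'_{-1}=\al\bigl(\be^tL'_0+\ka(\be')^t\bigr)=:\al\,\tilde\be^{\,t},
\end{align*}
so the product again has a pole of order at most one with leading term of the required rank-one shape, and $\tilde\be^{\,t}\al=\be^t(L'_0\al)+\ka(\be')^t\al=\ka'(\be^t\al)=0$ gives the trace condition. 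Finally $(LL')_0\,\al=(L_{-1}L'_1+L_0L'_0+L_1L'_{-1})\al=(\ka\ka'+\be^tL'_1\al)\,\al$, so $\al$ is once more an eigenvector of the zeroth coefficient. Thus $\glb(n)$ is an associative algebra; being a vector space closed under multiplication it is a fortiori closed under the commutator, settling the Lie case for $\gl(n)$. The cases $\snb(n)$ and $\slnb(n)$ then follow from the splitting \refE{glsplit}: $\snb(n)\cong\A$ is the function algebra, and $[L,L']$ is traceless-valued pointwise, so every Laurent coefficient is traceless and the $\gl$-result restricts to $\slnb(n)$.

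For $\sob(n)$ and $\spnb(2n)$ the product does not preserve the symmetry type, so I would verify the commutator directly. Skew-symmetry (resp. the symplectic relation $X^t\sigma+\sigma X=0$) of every Laurent coefficient is automatic, since $[L(P),L'(P)]\in\g$ for all $P$. The essential point is the drop in pole order: for $\sob(n)$, using $\al^t\al=0$ together with $\be^t\al=0$ (equivalently $\al^t\be=0$) and the primed identities, the would-be leading coefficient $[L_{-1},L'_{-1}]$ of $[L,L']$ vanishes, so the pole has order at most one; the surviving $w^{-1}$-coefficient is then of the form $\al\tilde\be^{\,t}-\tilde\be\al^t$ with $\tilde\be^{\,t}\al=0$, and the $w^0$-coefficient keeps $\al$ as an eigenvector, exactly as in \refE{sodef}.

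The main obstacle is the $\spnb(2n)$ case. Here the operators carry second-order poles, so $[L,L']$ a priori has a pole of order up to four, and one must show that the coefficients of $w^{-4}$ and $w^{-3}$ vanish while the $w^{-2}$- and $w^{-1}$-coefficients collapse to the admissible shapes in \refE{spdef}. This forces one to track all cross terms $L_{s,i}L'_{s,j}-L'_{s,i}L_{s,j}$ and to use the full list of constraints in \refE{glexpsp}--\refE{spdef}: the form $L_{-2}=\nu\al\al^t\sigma$, the rank-two form of $L_{-1}$, the isotropy relations $\al^t\sigma\al=0$ and $\be^t\sigma\al=0$, and $L_0\al=\ka\al$. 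The pole-order drop comes from $\al^t\sigma\al=0$ and $\al^t\sigma\be=0$; the extra condition $\al^t\sigma L_1\al=0$ is precisely what preserves the order-zero eigenvector relation for the commutator, as it kills the component of the zeroth coefficient applied to $\al$ along the $\be$-direction, and one must in addition check that $[L,L']$ itself again satisfies this order-one constraint. This bookkeeping is the delicate part; once it is carried out, $\spnb(2n)$ is closed under the commutator and the proof is complete.
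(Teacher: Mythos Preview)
Your proposal is correct and follows essentially the same approach as the paper: the paper's proof consists of the remark that the conditions at the weak singularities are purely local and therefore the argument of \cite{KSlax} carries over verbatim to the multi-point setting, together with the explicit $\gl(n)$ computation reproduced in the appendix, which is line-for-line the computation you give. Your treatment is in fact slightly more complete, since you also sketch the $\so(n)$ commutator reduction and outline what must be checked in the $\spn(2n)$ case, whereas the paper simply defers these to \cite{KSlax} and \cite{SheBo}.
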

The proof in \cite{KSlax}
extends without problems to 
the multi-point situation (see the appendix for an 
example).

These Lie algebras are called {\it Lax operator algebras}.

\subsection{Krichever-Novikov algebras of current type}
\label{S:kncurr}
$ $

Let  $\A$ be the (associative) algebra of meromorphic
functions on $\Sigma$ holomorphic outside of $A$.
Let $\g$ be an arbitrary finite-dimensional Lie algebra.
On the tensor product $\g\otimes\A$ a Lie algebra structure is
given by
\begin{equation}\label{E:mother}
[x\otimes f, y\otimes g]:=[x,y]\otimes (f\cdot g),
\quad x,y\in\g,\  f,g\in \A.
\end{equation}
The elements of this 
Lie algebra can be considered as the set of those
meromorphic maps
$\ \Sigma\to\g $,
which are holomorphic outside of $A$.
These algebras are called \emph{(multi-point) Krichever Novikov algebras
of current type}, see
\cite{KNFa}, \cite{KNFb}, \cite{KNFc}, 
\cite{Shea}, \cite{Sha}, \cite{SHab}, \cite{Saff}.

If the genus of the surface is zero and if $A$ consists
of two points, the Krichever-Novikov current algebras 
are the
classical current (or loop algebra)
$\g\otimes\C[z^{-1},z]$.

In the case that in
the defining data  of the Lax operator algebra
there are no weak singularities, resp. all $\alpha_s=0$,
then for the $\g$-valued 
meromorphic functions the requirements reduce to the 
condition  that they are holomorphic
outside of $A$. Hence, we obtain (for these $\g$)  the
Krichever-Novikov
current type algebra.
But note that not for all finite-dimensional $\g$ we have an
extension of the notion Krichever-Novikov current 
to a Lax operator algebra.
 
\section{The almost-graded structure}
\label{S:alm} 
\subsection{The statements}
\label{S:almstruct}
$ $

For the construction of certain important representations
of infinite dimensional Lie algebras 
(Fock space representations, Verma modules, etc.)
a graded structure is usually assumed and heavily used.
The algebras we are considering for higher genus, or
even for genus zero with many marked points were poles 
are allowed, cannot be nontrivially graded.
As realized by Krichever and Novikov \cite{KNFa}
a weaker concept, an almost-grading, will
be enough to allow to do the above mentioned constructions.
\begin{definition}\label{D:almgrad}
A Lie algebra $V$ will be called {\it almost-graded} (over $\Z$) 
if there exists finite-dimensional subspaces $V_m$ and constants
$S_1,S_2\in\Z$ such that
\begin{enumerate}
\item \quad
$V=\bigoplus_{m\in\Z}V_m$,
\item\quad
$\dim V_m<\infty$,\quad $\forall m\in\Z$,
\item\quad
$[V_n,V_m]\quad \subseteq\quad \sum_{h=n+m+S_1}^{n+m+S_2} V_h$.
\end{enumerate}
If there exists an $R$ such that $\dim V_m\le R$ for all $m$ it is
called \emph{strongly  almost-graded}.
\end{definition}
Accordingly, an almost-grading can be defined for associative algebras
and for modules over almost-graded algebras.

We will introduce for our 
multi-point Lax operator in the following such a
(strong) almost-graded structure. The almost-grading will be induced by the
splitting of our set $A$ into $I$ and $O$.
Recall that $I=\{P_1,P_2,\ldots,P_N\}$.
In the Krichever Novikov function, vector field, and current algebra
case this was done  by Krichever and Novikov
\cite{KNFa}
for the two-point situation.
In the two-point Lax operator algebra it was done by Krichever and
Sheinman 
\cite{KSlax}. In the two-point case there is only one 
splitting possible.
This is in contrast to the multi-point case which turns out
to be more difficult. 
The multi-point Krichever-Novikov algebras of different types
were done by Schlichenmaier
\cite{SDiss},\cite{SLc}.
We will recall it in \refS{kngrad}.

In \refS{almproof}  we will single out for each $m\in\Z$
a subspace $\gb_m$ of $\gb$,
called {\it (quasi-)homogeneous} subspace of degree $m$.
The degree is essentially related to  the order of the 
elements of $\gb$ at the points in $I$.
We will show
\begin{theorem}\label{T:almgrad}
Induced by  the splitting $A=I\cup O$  
the (multi-point) Lax operator algebra $\gb$
becomes a (strongly)  almost-graded Lie algebra
\begin{equation}\label{E:almgrad}
\begin{gathered}
\gb=\bigoplus_{m\in\Z}\gb_m, \qquad \dim\gb_m=N\cdot\dim\g
\\
[\gb_m,\gb_n]\quad\subseteq\quad\bigoplus_{h=n+m}^{n+m+S}\gb_h,
\end{gathered}
\end{equation}
with a constant $S$ independent of $n$ and $m$.
\end{theorem}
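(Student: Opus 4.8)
The plan is to model the grading on the known almost-grading of the Krichever--Novikov function algebra $\A$ and to reduce the whole construction to a Riemann--Roch count in which the weak-singularity conditions turn out to be degree-neutral. Recall that $\snb(n)\cong\A$, and that $\A$ carries an almost-grading $\A=\bigoplus_m\A_m$ with $\dim\A_m=N$ induced by the splitting $A=I\cup O$ (the multi-point Krichever--Novikov grading recalled in \refS{kngrad}). I would define the degree of a Lax operator through its orders at the in-points: an element of the homogeneous subspace $\gb_m$ is normalized to have order $\ge m$ at every $P_p\in I$, with order exactly $m$ and a prescribed leading coefficient at one distinguished in-point and strictly higher order at the remaining ones, the residual ambiguity being fixed by prescribing the behaviour at the out-points $O$ via Riemann--Roch. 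The first jet of the local coordinate $z_p$ enters only to pin down that leading coefficient, making each basis element unique. In the case $W=\emptyset$ the algebra $\gb$ is the current algebra $\g\otimes\A$, and the grading is simply the tensor product of the (trivial) $\g$-grading with the grading of $\A$; this already yields $\dim\gb_m=N\cdot\dim\g$ and serves as the model for the general case.

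For $W\ne\emptyset$ the algebra is no longer a tensor product, so the homogeneous subspaces must be built directly, and the heart of the argument is a dimension count. For each $m\in\Z$, each in-point $P_p$ and each basis vector $x\in\g$ I would produce a Lax operator whose expansion at $P_p$ begins at order $m$ with leading coefficient $x$, which vanishes to higher order at the other in-points, which satisfies the weak-singularity conditions \refE{gldef} (resp.\ \refE{sodef}, \refE{spdef}) at every $\ga_s\in W$, and whose pole order at $O$ is as small as Riemann--Roch permits. Existence and uniqueness rest on the observation that the weak conditions are \emph{degree-neutral}: at each $\ga_s$ the admission of a first-order pole with residue $\al_s\be_s^t$ adds $n-1$ free parameters (the vector $\be_s$ modulo the relation $\be_s^t\al_s=0$), while the eigenvector condition $L_{s,0}\al_s=\ka_s\al_s$ imposes exactly $n-1$ constraints on the holomorphic part ($n$ equations minus the free eigenvalue $\ka_s$). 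The gained and lost dimensions cancel, so the Riemann--Roch count for $\gb$ agrees with that for $\g\otimes\A$; choosing the orders at $O$ as in the function-algebra case to absorb the finite genus-dependent exceptional range, one obtains $\dim\gb_m=N\cdot\dim\g$ for every $m$.

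With the subspaces in hand, the almost-grading relation follows from the multiplicativity of orders. If $L\in\gb_m$ and $L'\in\gb_n$, then $\ord_{P_p}[L,L']\ge m+n$ at each $P_p\in I$, so $[L,L']$ has no components of degree below $m+n$; this is the lower bound in \refE{almgrad}. The upper bound is the standard Krichever--Novikov estimate: the commutator is again a Lax operator (by the Theorem preceding this statement), and its pole order at the out-points $O$ is bounded in terms of those of $L$ and $L'$, which bounds its degree from above by $m+n+S$ with $S$ depending only on the genus and the number of marked points, not on $m$ and $n$. Together these give $[\gb_m,\gb_n]\subseteq\bigoplus_{h=n+m}^{n+m+S}\gb_h$, and since $\dim\gb_m=N\cdot\dim\g$ is constant the grading is strong.

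The main obstacle I expect is not the bracket estimate but the existence-and-uniqueness step, precisely because the weak-singularity conditions couple the $\g$-valued Laurent coefficients through the Tyurin vectors $\al_s$ and therefore forbid a naive tensor-product argument. The degree-neutral cancellation must be verified separately for each classical type, since the conditions differ: skew-symmetry in the $\so(n)$ case with the modified residue $\al_s\be_s^t-\be_s\al_s^t$ and the isotropy $\al_s^t\al_s=0$, and most delicately the $\spn(2n)$ case, where second-order poles appear in \refE{glexpsp}, the residues are governed by \refE{spdef}, and the extra relation $\al_s^t\sigma L_{s,1}\al_s=0$ on the order-one coefficient must be folded into the count. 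Checking in each case that the parameters added at $W$ are exactly balanced by the constraints imposed there, so that the Riemann--Roch dimension is insensitive to the Tyurin data, is the technically demanding part and the place where the multi-point setting genuinely differs from the two-point treatment of \cite{KSlax}.
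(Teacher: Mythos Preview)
Your proposal is correct and follows essentially the same route as the paper: define $\gb_m$ by a Riemann--Roch count against a divisor $D_m=(D_m)_I+D_W+(D_m)_O$, use the degree-neutral cancellation at the Tyurin points (the paper phrases it as ``at the weak singular points we have exactly as many relations as we get parameters by the poles''), and read off the bracket bounds from the orders at $I$ (lower) and $O$ (upper). The paper makes two things more explicit than your sketch: the coherent prescription for $(D_m)_O=\sum_i(a_im+b_{m,i})Q_i$ with $a_i>0$ and uniformly bounded $b_{m,i}$, which is what actually produces a uniform constant $S$ and also drives the direct-sum decomposition $\gb=\bigoplus_m\gb_m$; and a brief treatment of non-generic Tyurin data, where the cancellation may fail by a bounded amount and one compensates by finitely many modifications of the $O$-divisor.
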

In addition we will show
\begin{proposition}\label{P:locex}
Let $X$ be an element of $\g$. For each $(m,s)$, $m\in\Z$ and
$s=1,\ldots,N$
  there is
a unique element $X_{m,s}$ in $\gb_m$ such that 
locally in the neighbourhood of the point $P_p\in I$ we have
\begin{equation}\label{E:locex}
{X_{m,s}}_{|}(z_p)=
Xz_p^m\cdot\delta_s^p+O(z_p^{m+1}),\quad \forall p=1,\ldots,N.
\footnote{The symbol $\delta_s^p$ denotes the Kronecker delta, which is
equal to 1 if $s=p$, otherwise $0$.}
\end{equation}
\end{proposition}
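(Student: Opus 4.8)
The plan is to reduce the whole statement to the bijectivity of a single linear ``symbol'' map and then to establish that bijectivity by Riemann--Roch. For fixed $m$ consider
\[
\Phi_m\colon\ \gb_m\ \longrightarrow\ \g^{\oplus N},\qquad L\ \longmapsto\ \bigl(\sigma_1(L),\ldots,\sigma_N(L)\bigr),
\]
where $\sigma_p(L)\in\g$ is the coefficient of $z_p^{m}$ in the local expansion of $L$ at $P_p\in I$. The element $X_{m,s}$ demanded by \refE{locex} is precisely $\Phi_m^{-1}(0,\ldots,X,\ldots,0)$ with $X$ in the $s$-th slot; hence \emph{existence} of $X_{m,s}$ is surjectivity of $\Phi_m$ and \emph{uniqueness} is injectivity. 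I would prove surjectivity directly (so that the dimension formula $\dim\gb_m=N\cdot\dim\g$ of \refT{almgrad} drops out as a corollary); alternatively, if one grants that formula, then $\dim\gb_m=\dim\g^{\oplus N}$ and it suffices to prove either injectivity or surjectivity alone.

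Injectivity is the quicker half. If $L\in\ker\Phi_m$ then $\ord_{P_p}(L)\ge m+1$ for every $p=1,\ldots,N$. Since the homogeneous layers of \refS{almproof} are characterised among degree candidates by their leading orders at the in-points, an element of $\gb_m$ with all in-point orders raised by one already satisfies the defining inequalities of the layers of degree $>m$ while leaving its behaviour at $O$ unchanged; by directness of $\gb=\bigoplus_k\gb_k$ this forces $L=0$. The real geometric content is therefore surjectivity, which I would obtain from Riemann--Roch. When $W=\emptyset$ the algebra is the current algebra $\g\otimes\A$, the problem decouples over a basis of $\g$, and one only needs functions $A_{m,p}\in\A$ with $A_{m,p}(z_i)=\delta_i^p z_i^{m}+O(z_i^{m+1})$ and degree-balancing pole orders at the points of $O$; these are the classical Krichever--Novikov basis functions of \cite{SDiss}, \cite{SLc}, and one sets $X_{m,s}=X\otimes A_{m,s}$.

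For general Tyurin data the same divisor is used -- order $\ge m$ at each $P_p$, pole orders at the $Q_j$ distributing the remaining degree, and a simple (resp.\ double for $\spn(2n)$) pole allowance at each $\ga_s$ -- and Riemann--Roch is applied to the corresponding space of $\g$-valued meromorphic functions with these prescribed divisor bounds. The main obstacle is the weak-singularity conditions \refE{gldef}, \refE{sodef}, \refE{spdef}: once $W\ne\emptyset$ the Lax operators are no longer tensor products, so the $\g$-components are coupled by the rank-one and eigenvector constraints at each $\ga_s$, and one must verify that these constraints are \emph{exactly} compensated by the extra freedom of the allowed pole at $\ga_s$. Concretely, admitting a pole of order one (resp.\ two) raises the degree at $\ga_s$, while the conditions $L_{s,-1}=\al_s\be_s^{t}$, $\tr(L_{s,-1})=0$ and $L_{s,0}\,\al_s=\ka_s\al_s$ cut the fibre back down; for generic Tyurin data the net dimension of the solution space is unchanged from the $W=\emptyset$ case and equals $N\cdot\dim\g$.

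The delicate step -- and the one I expect to be the hardest -- is carrying out this local count Lie-algebra by Lie-algebra and checking that the normalisation imposed at $P_s$ in \refE{locex} is transverse to the kernel of $\Phi_m$ described above. This is exactly where the internal structure of $\g$ and the admissibility hypotheses on the Tyurin parameters (for instance $\al_s\ne 0$ and $\al_s^{t}\al_s=0$ for $\so(n)$, and the extra condition $\al_s^{t}\sigma L_{s,1}\al_s=0$ for $\spn(2n)$) are genuinely used, so each classical series must be treated separately. Once the count yields dimension $N\cdot\dim\g$ and the injectivity of $\Phi_m$ is in hand, existence and uniqueness of $X_{m,s}$ follow simultaneously, and letting $X$ range over a basis of $\g$ and $s$ over $1,\ldots,N$ produces a basis of $\gb_m$, recovering \refT{almgrad}.
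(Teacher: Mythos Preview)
Your overall strategy---reformulate via the symbol map $\Phi_m:\gb_m\to\g^{\oplus N}$ and establish bijectivity through Riemann--Roch on the divisor $D_m$---is essentially the paper's approach, and your identification of the weak-singularity count as the substantive step is correct. The paper defines $\gb_m=L'(D_m)$, computes $\dim L'(D_m)=N\dim\g$ by Riemann--Roch (the Tyurin constraints exactly cancelling the pole freedom at $W$, generically), and then obtains the basis elements $X_{m,s}^u$ by comparing $\dim L'(D_m-\sum_iP_i)$ and $\dim L'(D_m-\sum_{i\ne s}P_i)$.

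There is, however, a circularity in your injectivity argument. You argue that $L\in\ker\Phi_m$ lies in $\gb_{m+1}$ (true, since $(D_{m+1})_O>(D_m)_O$) and then invoke directness of $\gb=\bigoplus_k\gb_k$ to conclude $L=0$. But that direct-sum decomposition is part of \refT{almgrad}, whose proof in \refS{almproof} rests on \refP{alm1} and \refP{alm2}, which in turn use the very basis elements $X^u_{m,s}$ you are trying to construct. The paper avoids this loop: uniqueness is obtained directly from Riemann--Roch by computing
\[
\dim L'\Bigl(D_m-\sum_{i=1}^N P_i\Bigr)=0,
\]
since $\deg(D_m-\sum_iP_i)=g-1+\epsilon K$ still lies in the range where equality holds (here $K\ge1$ is essential; the $W=\emptyset$ case is the classical Krichever--Novikov situation handled separately). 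That single line is what replaces your appeal to directness, and once you have it, both halves of the argument are self-contained. Alternatively, as you note, once the Riemann--Roch count gives $\dim\gb_m=N\dim\g$ exactly, surjectivity alone forces bijectivity---but then the injectivity paragraph should simply be dropped rather than argued via the decomposition.
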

\begin{proposition}\label{P:basis}
Let $\{X^u\mid u=1,\ldots,\dim\g\}$ 
be a basis of the finite dimensional
Lie algebra $\g$. Then  
\begin{equation}\label{E:set}
\mathcal{B}_m:=
\{X^u_{m,p},\  u=1,\ldots, \dim\g,\  p=1,\ldots, N \}
\end{equation}
is a basis 
of $\gb_m$, and $\mathcal{B}=\cup_{m\in\Z}\;\mathcal{B}_m$
is a basis of $\gb$.
\end{proposition}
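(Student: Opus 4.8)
The plan is to derive both claims directly from \refP{locex} together with the dimension formula in \refT{almgrad}. First I would observe that the index set of $\mathcal{B}_m$ has cardinality $N\cdot\dim\g$, which by \refE{almgrad} equals $\dim\gb_m$. Consequently it suffices to prove that the elements of $\mathcal{B}_m$ are linearly independent; the fact that they form a basis of $\gb_m$ then follows by comparison of dimensions.

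To establish linear independence I would take an arbitrary relation
\begin{equation*}
\sum_{u=1}^{\dim\g}\sum_{p=1}^{N} c_{u,p}\, X^u_{m,p}=0,\qquad c_{u,p}\in\C,
\end{equation*}
and read off the leading local data at each incoming point. By \refP{locex}, in the coordinate $z_q$ centered at $P_q\in I$ the element $X^u_{m,p}$ has the expansion $X^u z_q^{m}\,\delta_p^q+O(z_q^{m+1})$. Hence the coefficient of $z_q^{m}$ in the expansion of the left-hand side at $P_q$ equals $\sum_{u} c_{u,q}\,X^u$. Since the left-hand side is the zero element of $\gb$, all of its local expansions vanish, so $\sum_{u} c_{u,q}\,X^u=0$ for each $q=1,\ldots,N$. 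Because $\{X^u\}$ is a basis of $\g$, this forces $c_{u,q}=0$ for all $u$ and all $q$, which is the desired linear independence.

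Combining the two steps shows that $\mathcal{B}_m$ is a basis of $\gb_m$. Finally, since the almost-grading of \refT{almgrad} provides the direct sum decomposition $\gb=\bigoplus_{m\in\Z}\gb_m$, the disjoint union of bases of the homogeneous subspaces is a basis of the total space, so $\mathcal{B}=\cup_{m\in\Z}\mathcal{B}_m$ is a basis of $\gb$. I do not expect any genuine obstacle here: the entire argument rests on \refP{locex}, and the only point demanding a little care is the triangular bookkeeping by which the Kronecker delta in the leading term isolates, at each single point $P_q$, the combination $\sum_u c_{u,q}X^u$ and thereby decouples the relations for different values of $q$.
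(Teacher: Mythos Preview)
Your proof is correct and follows essentially the same approach as the paper's own proof: both count the index set of $\mathcal{B}_m$ against $\dim\gb_m$ from \refT{almgrad}, reduce to linear independence, and then use the local expansion \refE{locex} at each $P_q$ so that the Kronecker delta isolates $\sum_u c_{u,q}X^u=0$, forcing all coefficients to vanish; the global statement then follows from the direct sum decomposition.
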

\begin{proof}
By \refE{almgrad} we know that $\dim\gb_m=N\cdot \dim\g$.
The elements in $\mathcal{B}_m$ are pairwise different. Hence,
we have $\#\mathcal{B}_m=N\cdot \dim\g$
elements $\{X^u_{m,p}\}$ in $\gb_m$.
For being a basis it suffices to show that they are linearly 
independent.
Take $\sum_{u}\sum_p\a_{m,p}^uX_{m,p}^u=0$ a linear combination
of zero. We consider the local  expansions at the point
$P_s$, for $s=1,\ldots, N$. From  \refE{locex}
we obtain
\begin{equation*}
0=
(\sum_{u}\a_{m,s}^uX^u)z_s^m+O(z_s^{m+1}).
\end{equation*}
Hence $0=\sum_{u}\a_{m,s}^uX^u$. As the $X^u$ are a basis of $\g$ this
implies
that $a_{m,s}^u=0$ for all $u,s$.
That $\mathcal{B}$ is a basis of the full $\gb$ 
follows from the direct sum
decomposition
in \refE{almgrad}.
\end{proof}

It is very convenient to introduce the associated filtration
\begin{equation}\label{E:filt}
\gb_{(k)}:=\bigoplus_{m\ge k} \gb_m,\qquad
\gb_{(k)}\subseteq \gb_{(k')},\quad  k\ge k'.
\end{equation}
\begin{proposition}\label{P:filt}
$ $

\smallskip\noindent
(a) \quad $\gb=\bigcup_{m\in\Z}\gb_{(m)}$,

\smallskip\noindent
(b) \quad $[\gb_{(k)},\gb_{(m)}]\subseteq  \gb_{(k+m)}$,

\smallskip\noindent
(c)  \quad $\gb_{(m)}/\gb_{(m+1)}\cong \gb_m$.

\smallskip\noindent
(d) The equivalence classes of the 
elements of the set $\mathcal{B}_m$ (see \refE{set})
constitute a basis 
for the quotient space $\gb_{(m)}/\gb_{(m+1)}$.
\end{proposition}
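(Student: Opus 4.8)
The plan is to derive all four assertions directly from the almost-grading established in \refT{almgrad} and the basis constructed in \refP{basis}. The one structural fact underlying everything is that the filtration splits at each level: from the definition $\gb_{(k)}=\bigoplus_{j\ge k}\gb_j$ one reads off immediately that $\gb_{(m)}=\gb_m\oplus\gb_{(m+1)}$ as vector spaces. For part (a) I would use that, by the direct sum decomposition $\gb=\bigoplus_{m\in\Z}\gb_m$, every $x\in\gb$ is a \emph{finite} sum of homogeneous components; letting $k_0$ be the least degree that actually occurs (and noting $0\in\gb_{(m)}$ for all $m$) gives $x\in\gb_{(k_0)}$, hence $\gb\subseteq\bigcup_m\gb_{(m)}$, while the reverse inclusion is trivial.

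For part (b) I would write $x=\sum_{i\ge k}x_i\in\gb_{(k)}$ and $y=\sum_{j\ge m}y_j\in\gb_{(m)}$ as finite sums of homogeneous pieces and expand $[x,y]=\sum_{i,j}[x_i,y_j]$ by bilinearity. The almost-grading of \refT{almgrad} gives $[x_i,y_j]\in\bigoplus_{h=i+j}^{i+j+S}\gb_h$; since $i\ge k$ and $j\ge m$ force every occurring index to satisfy $h\ge i+j\ge k+m$, each bracket lies in $\gb_{(k+m)}$, and summing yields the inclusion.

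Parts (c) and (d) then follow from the splitting. The composite of the inclusion $\gb_m\hookrightarrow\gb_{(m)}$ with the canonical projection $\pi\colon\gb_{(m)}\to\gb_{(m)}/\gb_{(m+1)}$ is a linear isomorphism, which is (c); and since $\mathcal{B}_m$ is a basis of $\gb_m$ by \refP{basis} and an isomorphism carries a basis to a basis, the classes $\pi(\mathcal{B}_m)=\{[X^u_{m,p}]\}$ form a basis of the quotient, which is (d).

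The proof is purely formal, so there is no real obstacle; every step is a bookkeeping consequence of the two results already in hand. The only point I would be careful to record is that the isomorphism in (c) is one of vector spaces only: because the upper index in the bracket estimate runs up to $i+j+S$ rather than stopping at $i+j$, the subspace $\gb_{(m+1)}$ need not be an ideal of $\gb_{(m)}$, so no Lie-algebra structure on the quotient is being asserted.
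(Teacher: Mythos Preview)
Your proof is correct and follows the same approach as the paper, which simply records that (a), (b), (c) follow directly from the almost-grading \refE{almgrad} and that (d) follows from \refP{basis}. You have merely spelled out in detail what the paper leaves implicit; your added remark that the isomorphism in (c) is only one of vector spaces is a welcome clarification.
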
  
\begin{proof}
Equation \refE{almgrad} implies directly 
(a), (b), and (c). Part (d)  follows from \refP{basis}.
\end{proof}
There is another filtration.
\begin{equation}\label{E:filt1}
\gb'_{(m)}:=\{L\in\gb\mid \ord_{P_s}(L)\ge m,\ s=1,\ldots, N\}.
\end{equation}
Note that the elements
$L$ are meromorphic maps from $\Sigma$ to $\g$, hence it makes sense
to talk about the orders of the component functions with
respect to a basis. 
The minimum of these orders is meant in \refE{filt1}.
\begin{proposition}\label{P:filt2}
$ $

\smallskip\noindent
(a) \quad $\gb=\bigcup_{m\in\Z}\gb'_{(m)}$.

\smallskip\noindent
(b) The two filtrations  coincide, i.e.
\begin{equation*}
\gb_{(m)}=\gb'_{(m)},\qquad\forall m\in\Z.
\end{equation*}
\end{proposition}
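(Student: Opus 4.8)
The plan is to prove the two parts in turn, with part (a) being essentially the finiteness of poles and part (b) reducing to a minimal-degree argument built on \refP{locex}. For part (a), any $L\in\gb$ is a $\g$-valued meromorphic function, so at each of the finitely many in-points $P_s$ its order $\ord_{P_s}(L)$ is finite; taking $m:=\min_{s=1,\dots,N}\ord_{P_s}(L)$ yields $L\in\gb'_{(m)}$, whence $\gb=\bigcup_{m\in\Z}\gb'_{(m)}$.

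For part (b) I would first establish the easy inclusion $\gb_{(m)}\subseteq\gb'_{(m)}$. Any $L\in\gb_{(m)}$ is a finite sum $L=\sum_{k\ge m}L_k$ with $L_k\in\gb_k$. Writing $L_k$ in the basis $\{X^u_{k,p}\}$ of $\gb_k$ and invoking the local normal form of \refP{locex} (leading term $X^u z_s^{k}$ when $p=s$, and $O(z_s^{k+1})$ when $p\ne s$), one reads off $\ord_{P_s}(L_k)\ge k\ge m$ for every $s$. Since the order of a sum is at least the minimum of the summands' orders, $\ord_{P_s}(L)\ge m$ for all $s$, i.e. $L\in\gb'_{(m)}$.

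The substantial direction is the reverse inclusion $\gb'_{(m)}\subseteq\gb_{(m)}$, which I would prove by a minimal-degree argument. Given $L\in\gb'_{(m)}$, use part (a) and the direct sum decomposition of \refT{almgrad} to write $L=\sum_k L_k$ as a finite sum, and let $k_0$ be the least index with $L_{k_0}\ne 0$; the goal is to show $k_0\ge m$. Suppose $k_0<m$ and expand $L_{k_0}=\sum_{u,p}a^u_p X^u_{k_0,p}$. At each point $P_s$ every contribution $L_k$ with $k>k_0$ is $O(z_s^{k_0+1})$, while by \refP{locex} only the $p=s$ terms of $L_{k_0}$ survive at leading order, so that
\[
L(z_s)=\Big(\sum_u a^u_s X^u\Big)z_s^{k_0}+O(z_s^{k_0+1}).
\]
As $\ord_{P_s}(L)\ge m>k_0$, the coefficient of $z_s^{k_0}$ must vanish, giving $\sum_u a^u_s X^u=0$; linear independence of the basis $\{X^u\}$ of $\g$ then forces $a^u_s=0$ for all $u$. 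Running $s$ over $1,\dots,N$ kills every coefficient, so $L_{k_0}=0$, a contradiction. Hence $k_0\ge m$ and $L\in\gb_{(m)}$.

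I do not expect a serious obstacle here, since the computation mirrors the proof of \refP{basis} already given. The one point to handle with care is the uniform statement that each homogeneous piece $\gb_k$ raises the order by at least $k$ \emph{simultaneously} at all in-points; this is precisely what makes the higher-degree summands invisible at leading order at every $P_s$ and lets the leading coefficient at $P_s$ depend only on the $p=s$ components of the lowest piece $L_{k_0}$.
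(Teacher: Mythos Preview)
Your proof is correct and follows essentially the same approach as the paper's: part (a) is the finiteness of pole orders at the finitely many in-points, and part (b) uses the local normal form from \refP{locex} together with the basis from \refP{basis}/\refT{almgrad} to argue that no homogeneous component of degree below $m$ can occur in the expansion of an element of $\gb'_{(m)}$. Your contradiction argument with the minimal index $k_0$ is a slightly more explicit packaging of the paper's sentence that ``the individual orders at the points $P_s$ cannot increase with non-trivial linear combinations,'' but the content is the same.
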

\begin{proof}
Let $L\in\gb$, then as $\g$-valued meromorphic functions the 
pole orders of the component functions at the points $P_s$
are individually bounded. As there are only finitely many, there
is a bound $k$ for the pole order, hence $L\in\gb_{(-k)}$. This
shows (a) and consequently that $(\gb'_{(m)})$ is a filtration.
\newline
By \refP{basis} we know that $\mathcal{B}$ is a basis of $\gb$.
Let $L\in\gb'_{(m)}$.
Every element of $L\in\gb$ will be a finite linear combination
 of the basis elements. 
The elements of $\mathcal{B}_k$ have 
exact order $k$  
and are linearly independent.
Moreover, with respect to a fixed basis element of the finite
dimensional Lie algebra we have $N$ basis elements in $\mathcal{B}_k$ with
orders given by \refE{locex}. Hence the individual orders
at the points $P_s$  cannot increase
with non-trivial linear combinations. Hence 
 only $k\ge m$ can appear
in
the combination. This shows $L\in\gb_{(m)}$.
 Vice versa, obviously
all elements from $\mathcal{B}_k$ for $k\ge m$ lie in the
set \refE{filt}. Hence, we have equality.
\end{proof}
The second description of the filtration has the big advantage, 
that it is very naturally defined. The only data which enters is the
splitting of the points $A$ into $I\cup O$.
Hence, it is canonically given by $I$.
In contrast, it will turn out that in the multi-point case 
if $\#O>1$ there might
be some choices necessary to fix $\gb_m$, like numbering the points in $O$, resp.
even some different rules for the points in $O$. But via 
\refP{filt2} we know that the induced filtration \refE{filt} will
not depend on any of these choices.

Here we have to remark that we supplied above a  proof of
\refP{filt2}.
But it  
was based on results (i.e. \refT{almgrad} and \refP{locex})
which we only  will prove in 
\refS{almproof}. Our starting point there will be 
the filtration $\gb'_{(m)}$, hence we cannot 
 assume equality
from the
very beginning.

\medskip
We have the very important fact 
\begin{proposition}\label{P:almprod}
Let $X_{k,s}$ and $Y_{m,p}$ be the elements in $\gb_k$ and $\gb_m$ 
corresponding to $X,Y\in\g$ respectively then 
\begin{equation}\label{E:alalg1}
[X_{k,s},Y_{m,p}]={[X,Y]}_{k+m,s}\delta_s^p+L, 
\end{equation}
with $[X,Y]$ the bracket in $\g$ and $L\in \gb_{(k+m+1)}$.
\end{proposition}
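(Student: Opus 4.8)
The plan is to establish \refE{alalg1} by a purely local computation at the incoming points, reading off the leading coefficient of the pointwise matrix commutator and then identifying the remainder through the order-based description of the filtration. Since $\gb$ is a Lie algebra under the pointwise commutator, the element $[X_{k,s},Y_{m,p}]$ again lies in $\gb$; it remains only to pin down its lowest-order behaviour at each $P_q\in I$.

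First I would insert the normalizations \refE{locex} for the two homogeneous factors. By \refP{locex} we have, for every $q=1,\dots,N$,
\[
{X_{k,s}}_{|}(z_q)=X\,z_q^{k}\,\delta_s^q+O(z_q^{k+1}),\qquad
{Y_{m,p}}_{|}(z_q)=Y\,z_q^{m}\,\delta_p^q+O(z_q^{m+1}).
\]
Taking the pointwise commutator and using bilinearity of the matrix bracket together with the fact that a product of an $O(z_q^{a})$-term and an $O(z_q^{b})$-term is $O(z_q^{a+b})$, one obtains
\[
{[X_{k,s},Y_{m,p}]}_{|}(z_q)=[X,Y]\,z_q^{k+m}\,\delta_s^q\,\delta_p^q+O(z_q^{k+m+1}).
\]
The scalar factor satisfies $\delta_s^q\delta_p^q=\delta_s^p\,\delta_s^q$, since both products equal $1$ exactly when $q=s=p$.

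Next I would compare this with the candidate leading term. Applying \refP{locex} to $Z:=[X,Y]\in\g$ gives ${[X,Y]_{k+m,s}}_{|}(z_q)=[X,Y]\,z_q^{k+m}\,\delta_s^q+O(z_q^{k+m+1})$, hence
\[
{\big([X,Y]_{k+m,s}\,\delta_s^p\big)}_{|}(z_q)=[X,Y]\,z_q^{k+m}\,\delta_s^q\,\delta_s^p+O(z_q^{k+m+1}).
\]
Therefore the difference $L:=[X_{k,s},Y_{m,p}]-[X,Y]_{k+m,s}\,\delta_s^p$ has, at every incoming point $P_q$, a local expansion starting no earlier than order $k+m+1$, i.e. $\ord_{P_q}(L)\ge k+m+1$ for all $q=1,\dots,N$. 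By the order-based description \refE{filt1} this means $L\in\gb'_{(k+m+1)}$, and by \refP{filt2} the two filtrations coincide, so $L\in\gb_{(k+m+1)}$. This is exactly the claimed decomposition \refE{alalg1}.

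The computation itself is elementary; the only points requiring care are the bookkeeping of the Kronecker deltas and, crucially, the observation that membership in $\gb_{(k+m+1)}$ is controlled \emph{only} by the vanishing orders at the points of $I$. In particular the behaviour of $L$ at the outgoing points $O$ and at the weak singularities $W$ plays no role here, so the leading term need only be checked against the local normalizations \refE{locex} at the $P_q$. The substantive inputs are thus the uniqueness statement of \refP{locex}, which guarantees that $[X,Y]_{k+m,s}$ is the unique degree-$(k+m)$ element with the computed leading behaviour, and the identification of the two filtrations in \refP{filt2}, which converts the order estimate into the filtration membership $L\in\gb_{(k+m+1)}$.
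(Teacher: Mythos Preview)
Your proof is correct and follows essentially the same approach as the paper: compute the local expansion of the commutator at each point of $I$ using \refE{locex}, subtract off the candidate leading term $[X,Y]_{k+m,s}\,\delta_s^p$, and invoke the order-based filtration \refE{filt1} together with \refP{filt2} to place the remainder in $\gb_{(k+m+1)}$. Your treatment is slightly more explicit about the Kronecker-delta bookkeeping and the irrelevance of the behaviour at $O$ and $W$, but the argument is the same.
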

\begin{proof}
Using for $X_{k,s}$ and $Y_{m,p}$ the expression \refE{locex} we obtain
$$
{[X_{k,s},Y_{m,p}]}_|(z_t)=[X,Y]z_s^{k+m}\delta_t^p\delta_t^s+O(z_t^{k+m+1}), 
$$
for  every  $t$.
Hence, the element
$$
[X_{k,s},Y_{m,p}]-([X,Y])_{k+m,s}\delta_s^p
$$
has at all points in $I$ an order $\ge k+m+1$. With \refE{filt1} and 
\refP{filt2}
we obtain that it lies in $\gb_{(k+m+1)}$, 
which is the claim.
\end{proof}
\subsection{The function algebra 
 $\A$ and the vector field algebra  $\L$}\label{S:kngrad}
$ $

Before we supply the proofs of the statements in \refS{almstruct}
we want to introduce those Krichever-Novikov type algebras which
are of relevance in the following. We start with the 
Krichever-Novikov function algebra $\A$ and the
Krichever-Novikov vector field algebra $\L$. Both algebras are 
almost-graded algebras 
\begin{equation}
\A=\bigoplus_{m\in\Z} \A_m,\qquad
\L=\bigoplus_{m\in\Z} \L_m,
\end{equation}
where the almost-grading is 
induced by the same splitting of $A$ into $I\cup O$ as used for
defining the Lax operator algebras.
Recall that $I=\{P_1,\ldots,P_N\}$ and $O=\{Q_1,\ldots, Q_M\}$.

Let $\A$, respectively
$\L$, be the space of meromorphic functions, respectively of
meromorphic vector fields on $\Sigma$, holomorphic on
$\Sigma\setminus A$. In particular, they are holomorphic
also at the points in $W$. Obviously, $\A$ is an associative
algebra under the product of functions and $\L$ is a Lie algebra
under the Lie bracket of vector fields. 
In the two point case their almost-graded structure was introduced 
by Krichever and Novikov \cite{KNFa}. In the multi-point case they
were given by Schlichenmaier \cite{SLc}, \cite{SDiss}.
The results will be described in the following.

The homogeneous spaces $\A_m$ have as basis 
the set of functions
$\{A_{m,s},s=1,\ldots, N\}$
given by the conditions
\begin{equation}\label{E:Aset}
\ord_{P_i}(A_{m,s})
=(n+1)-\delta_{i}^s,\quad i=1,\ldots, N,
\end{equation}
and certain compensating conditions at the points in $O$ to 
make it unique up to multiplication with a scalar. 
For example, in case that $\#O=M=1$ and the genus is either 0, or $\ge
2$, and the points are in generic position, then the condition is
(with the exception for finitely many $m$)
\begin{equation}
\ord_{Q_M}(A_{m,s})=-N\cdot(n+1)-g+1.
\end{equation}
To make it unique we require for the local expansion at
the $P_s$  (with respect to the chosen local coordinate $z_s$)
\begin{equation}\label{E:Anorm}
{A_{n,s}}_|(z_s)=z_s^n+O(z_s^{n+1}).
\end{equation}

For the  vector field algebra $\L_m$ 
we have the  basis  $\{e_{m,s}\mid s=1,\ldots, N\}$, 
where the elements $e_{m,s}$ 
are given by the condition
\begin{equation}
\ord_{P_i}(e_{m,s})
=(n+2)-\delta_{i}^s,\quad i=1,\ldots, N,
\end{equation}
and corresponding compensating conditions at the points in $O$ to 
make it unique up to multiplication with a scalar. 
In exactly the same special situation as above 
the condition is
\begin{equation}
\ord_{Q_M}(e_{m,s})=-N\cdot(n+2)-3(g-1).
\end{equation}
The local expansion at
$P_s$ is
\begin{equation}
{e_{n,s}}_|(z_s)=(z_s^{n+1}+O(z_s^{n+2}))\frac d{dz_s}.
\end{equation}
There are constants $S_1$ and $S_2$ 
(not depending on $m,n$) such that
\begin{equation}
\A_k\cdot \A_m\subseteq
\bigoplus_{h=k+m}^{k+m+S_1}\A_h,
\qquad
[\L_k,\L_m]\subseteq
\bigoplus_{h=k+m}^{k+m+S_2}\L_h.
\end{equation}
This says that we have almost-gradedness.
In what follows we will need the  fine structure of
the almost-grading
\begin{align}
A_{k,s}\cdot A_{m,t}&=A_{k+m,s}\,\delta_s^t+Y, \qquad Y\in
\sum_{h=k+m+1}^{k+m+S_1}
\A_h,
\\
[e_{k,s},e_{m,t}]&=(m-k)\,e_{k+m,s}\,
\delta_s^t+Z,\qquad Z\in \sum_{h=k+m+1}^{k+m+S_2}
\L_h.
\end{align}
Again we have the induced filtrations $\A_{(m)}$
and $\L_{(m)}$.

\medskip

The elements of the Lie algebra $\L$ act on $\A$ as derivations.
This makes the space $\A$  an almost-graded module over $\L$. In
particular, we have
\begin{equation}
e_{k,s}\ldot A_{m,r}=mA_{k+m}\,\delta_s^r+U,
\qquad
 U\in \sum_{h=k+m+1}^{k+m+S_3}\A_h,
\end{equation}
with a constant $S_3$ not depending on $k$ and $m$. 

\medskip

Induced by the almost-grading of $\A=\oplus_{m}\A_m$ we get an
almost-grading for the Krichever-Novikov type algebra of
current type by setting
\begin{equation}
\g\otimes\A=\bigoplus_{m\in\Z}(\g\otimes\A)_{m} \quad \text{with}\quad
(\g\otimes\A)_{m}:=\g\otimes\A_{m},\ \forall m\in\Z.
\end{equation}

\subsection{The proofs}
\label{S:almproof}
$ $

Readers being in a hurry, or readers only interested in the results
may skip this rather technical section 
(involving Riemann-Roch type arguments) 
during a first reading and jump directly to 
\refS{mod}.

Recall the definition 
\begin{equation}
\gb'_{(m)}:=\{L\in\gb\mid \ord_{P_s}(L)\ge m,\ s=1,\ldots, N\}
\end{equation}
of the filtration. We will only deal 
with this filtration in this section, hence for
notational reason we will drop the ${}'$ in the following.
Finally, the primed and unprimed will coincide.
\begin{proposition}\label{P:filtbas}
Given $X\in\g$, $X\ne 0$, $s=1,\ldots,N$, $m\in\Z$  then there exists 
at least one $X_{m,s}$ such that
\begin{equation}\label{E:pal1}
{X_{m,s}}_|(z_p)=Xz_s^m\,\delta_p^s+O(z_p^{m+1}).
\end{equation}
\end{proposition}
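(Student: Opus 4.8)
The plan is to deduce existence from a Riemann--Roch dimension count, by realizing the desired element as a preimage under an evaluation map that I show is surjective. First I would reduce the claim to such a surjectivity. Consider the filtration space $\gb_{(m)}$ of \refE{filt1}, consisting of those $L\in\gb$ with $\ord_{P_p}(L)\ge m$ for all $p=1,\dots,N$, together with the linear leading--coefficient map
\begin{equation*}
\mathrm{ev}_m\colon \gb_{(m)}\longrightarrow \bigoplus_{p=1}^{N}\g,\qquad
L\mapsto \bigl(c_p(L)\bigr)_{p=1}^N,
\end{equation*}
where $c_p(L)\in\g$ is the coefficient of $z_p^m$ in the local expansion of $L$ at $P_p$. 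Its kernel is exactly $\gb_{(m+1)}$, and an element $X_{m,s}$ as required in \refE{pal1} is precisely a preimage of the tuple $(0,\dots,0,X,0,\dots,0)$ with $X$ in the $s$-th slot. Hence it suffices to show that $\mathrm{ev}_m$ is surjective; this then yields the assertion for every $X\in\g$ (in particular $X\ne 0$) and every $s$ at once.

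Next I would make the problem finite-dimensional and invoke Riemann--Roch. Since elements of $\gb$ are only required to be holomorphic off $W\cup A$, we may allow arbitrarily high poles at the out-points $O=\{Q_1,\dots,Q_M\}$. Fix a large bound $T$ and let $V_T\subseteq\gb_{(m)}$ be the (finite-dimensional) subspace of those $L$ with pole order $\le T$ at each $Q_j$. The defining data of $\gb$ --- the structural requirement that $L$ be $\g$-valued (trace-free, skew-symmetric, or symplectic according to the type) and the weak-singularity conditions \refE{gldef}, \refE{sodef}, \refE{spdef} at the points of $W$ --- are all $\C$-linear conditions on Laurent coefficients at fixed points; together with the prescribed orders at $A$ they cut out a coherent subsheaf $\mathcal S_T$ of the sheaf of $\g$-valued meromorphic functions, with $V_T=H^0(\Sigma,\mathcal S_T)$. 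Comparing $\mathcal S_T$ with the subsheaf $\mathcal S'_T$ imposing one further order of vanishing at each $P_p$, the quotient $\mathcal S_T/\mathcal S'_T$ is the skyscraper at the points of $I$ with total stalk $\bigoplus_p\g$, and the cohomology sequence reads
\begin{equation*}
0\to V_T\cap\gb_{(m+1)}\to V_T\xrightarrow{\ \mathrm{ev}_m\ }\bigoplus_{p=1}^N\g\to H^1(\Sigma,\mathcal S'_T)\to\cdots .
\end{equation*}
Thus $\mathrm{ev}_m$ is surjective on $V_T$ (hence on all of $\gb_{(m)}$ as $T\to\infty$) as soon as $H^1(\Sigma,\mathcal S'_T)=0$; equivalently, once $T$ is large the added order of vanishing at the $N$ points drops the dimension by exactly $N\dim\g$, which matches the target and forces surjectivity.

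The heart of the matter --- and the step I expect to be the main obstacle --- is the vanishing $H^1(\Sigma,\mathcal S'_T)=0$ for $T$ large. Here one uses that raising $T$ increases the degree of $\mathcal S'_T$ along $O$ without bound, while the weak-singularity conditions at $W$, the order conditions at $I$, and the structural conditions contribute only a fixed correction to the Euler characteristic, independent of $T$ and of $m$ (which is exactly why they cancel in the difference $\dim V_T-\dim(V_T\cap\gb_{(m+1)})$). Since a subsheaf of a vector bundle on a curve is again a bundle, $\mathcal S'_T$ is locally free of rank $\dim\g$ of arbitrarily high degree, so Serre duality identifies the obstruction with sections of a sheaf of strictly negative degree and gives the vanishing. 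The genuine work is encoding the Tyurin conditions \refE{gldef}, \refE{sodef}, \refE{spdef} correctly as a coherent (elementary) modification at $W$ --- these are not mere pole-order bounds but involve the rank-one/eigenvector shape of $L_{s,-1}$ and $L_{s,0}\al_s=\ka_s\al_s$ --- and pinning down their type-dependent but $T$-independent contribution.

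Because all five cases $\gl(n),\sln(n),\so(n),\spn(2n),\sn(n)$ are governed by linear local conditions, the argument is uniform; the symplectic case merely requires carrying second-order poles at $W$ through the bookkeeping. As a consistency check, when $W=\emptyset$ the sheaf reduces to $\g\otimes\O_\Sigma(D)$ for a divisor $D$ supported on $A$, and the statement collapses to the known Krichever--Novikov existence of the functions $A_{m,s}$ of \refE{Aset}, \refE{Anorm}, so that $X_{m,s}=X\otimes A_{m,s}$.
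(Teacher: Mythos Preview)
Your approach is sound and is, at bottom, the same Riemann--Roch argument as the paper's, repackaged in sheaf--cohomological language. The paper works more directly: it picks the $O$--part of the divisor $D_m$ so large that both $D_m$ and $D_m-\sum_i P_i$ lie in the range where the scalar Riemann--Roch inequality \refE{rab} is an equality, treats the Tyurin constraints simply as a fixed number $H$ of linear conditions on global sections (not as a subsheaf), and then compares
\[
\dim L'(D_m-\textstyle\sum_iP_i)\quad\text{and}\quad \dim L'\bigl(D_m-\textstyle\sum_iP_i+P_s\bigr)
\]
to produce the required $r=\dim\g$ elements of exact order $m$ at $P_s$. Your surjectivity of $\mathrm{ev}_m$ is exactly this dimension jump, and your $H^1(\mathcal S'_T)=0$ for $T\gg0$ is precisely the condition that Riemann--Roch gives the dimension on the nose.

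Two points deserve tightening. First, the passage from ``$\C$-linear conditions on Laurent coefficients'' to ``coherent subsheaf'' is not automatic: a $\C$-linear jet condition need not be $\mathcal O$-linear (e.g.\ ``constant term vanishes'' on $\mathcal O(\gamma)$ is not). For the Tyurin conditions it \emph{is} true, but one must check that multiplication by a local holomorphic $f=f_0+f_1w_s+\cdots$ preserves them; for instance the eigenvector condition on $(fL)_{s,0}=f_0L_{s,0}+f_1L_{s,-1}$ only survives because the rank-one/trace shape of $L_{s,-1}$ forces $L_{s,-1}\alpha_s=0$. You flag this as ``the genuine work'', which is fair, but the paper's formulation sidesteps it entirely by never leaving the level of global sections. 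Second, ``sections of a sheaf of strictly negative degree'' is not enough to kill $H^0$ for a vector bundle; what you actually use is that $\mathcal S'_T$ is a \emph{fixed} bundle twisted by $\mathcal O(T\sum_jQ_j)$, so its Serre dual is a fixed bundle twisted by an arbitrarily negative line bundle, and that does force $H^0=0$ for $T\gg0$. With these two clarifications your argument goes through; the paper's version is more elementary but yours makes the geometry (elementary modification at $W$) more visible.
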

The proof is based on the theorem of Riemann-Roch.
The technique will be used all-over in this section. Hence, we will
introduce some notation, before we proceed with the proof.
For any $m\in\Z$ we will consider certain divisors
\begin{equation}\label{E:div_m}
 D_m=(D_m)_I+D_W+(D_m)_O.
\end{equation}
Where 
\begin{equation}
\begin{aligned}
(D_m)_I&=-m\sum_{s=1}^NP_s,
\\
(D_m)_O&=\sum_{s=1}^Ma_{s,m}Q_s, \quad a_{s,m}\in\Z
\\
D_W&=\epsilon\sum_{s=1}^K\ga_s,
\quad \epsilon=1,\text{ for } \gl(n),\sln(n),\so(n),
\quad \epsilon=2,\text{ for } \spnb(n).
\end{aligned}
\end{equation}
Recall that the genus of $\Sigma$ is $g$. Denote by $\mathcal{K}$ 
a canonical divisor. Set $L(D)$ the space  
consisting of meromorphic
functions $u$ on $\Sigma$ 
for which we have for their divisors $(u)\ge-D$.
Riemann-Roch says
\begin{equation}
\dim L(D)-\dim L(\mathcal{K}-D)=\deg D-g +1.
\end{equation}
In particular, we have
\begin{equation}\label{E:rab}
\dim L(D)\ge \deg D -g+1.
\end{equation}
We have several cases which we will need in the following
\begin{enumerate}
\item
If  $\deg D\ge 2g-1$ then we have equality in
\refE{rab}.
\item
If $D$ is a generic divisor then also for  
$g\le \deg D\le 2g-2$ we have equality.
\item
If $D\ge 0$ and $D$ is generic we have
$\dim L(D)=1$ for $0\le \deg D\le g-1$.
\item
If $D\not\ge 0$ (meaning that there is at least one
point in the support of $D$ with negative multiplicity) 
and $D$ is generic we have
$\dim L(D)=0$ for $0\le \deg D\le g-1$.
\item
For $g=0$ every divisor is generic and we have equality in 
\refE{rab} as long as the right hand side is $\ge 0$, i.e.
$\dim L(D)=\max(0,\deg D+1)$.
\end{enumerate}
See e.g. \cite{SchlRS} for
informations on divisors,  Riemann-Roch and 
their applications, see also \cite{Grif}.

In  case that $u=(u_1,u_2,\ldots, u_{r})$ 
is a vector valued function we define $L(D)$
to be the vector space of vector valued functions 
with $(u)\ge-D$. This means that
$(u_i)\ge-D$ for all $i=1,\ldots,r$.
Now all dimension formulas have to be multiplied by
$r$:
\begin{equation}\label{E:rrab}
\dim L(D)\ge r(\deg D -g+1).
\end{equation}
We apply this to our Lax operator algebra $\gb$ by considering
the component functions $u_i$, $i=1,\ldots,r=\dim\g$
with respect to a fixed basis.
We set
\begin{equation}
L'(D):=\{u\in L(D)\mid u\ \text{gives an element of } \gb\}
\subseteq L(D).
\end{equation}
In $L'(D_m)$ we have to take into account that at the weak singular points 
$\ga_s$ we
have $H$ additional linear conditions for the elements of
the solution space $L(D_m)$
to be fulfilled. They are formulated   
in terms of the corresponding $\al_s$
for some finite part of the Laurent series.
In total this are finitely many conditions.
In case that the $\al_s$ are generic they will exactly compensate
for the possible poles at $\ga_s$
\cite{KSlax}. But for the moment  we 
still consider them to be arbitrary.

\medskip

By the very definition of the filtration we always have
\begin{equation}
\gb_{(m)} = L'((D_m)_I)
\quad\text{and}\quad
\gb_{(m)} \ge L'(D_m).
\end{equation}

\medskip

\begin{proof} (\refP{filtbas})
We start with a divisor $D_m$ by choosing the part $(D_{m})_O=T$ such 
that  
the degree of the divisors $D_m$ and $D_m-\sum P_i$ is still big
enough such
that for both the case (1) of the Riemann-Roch equality 
\refE{rrab} is true and that $\dim L(D)=l\ge r(N+1)+H$. 
Hence, after applying the $H$ linear conditions we  have
$\dim L'(D)\ge r(N+1)$. 
Let $P_s$ be a fixed point from $I$.
We consider 
\begin{equation}
D_m'=D_m-\sum_{i=1}^NP_i,
\quad
D_m''=D_m'+P_s.
\end{equation}
This yields 
\begin{equation}\label{E:dimd}
\dim L'(D'_m)=l-rN,
\qquad
\dim L'(D''_m)=l-rN+r.
\end{equation}
The element in $L'(D'_m)$ have  orders $\ge (m+1)$ at all points in
$I$.
The elements in $L'(D''_m)$ have orders $\ge (m+1)$ at all points 
$P_i$, $i\ne s$ and orders $\ge m$ at $P_s$. 
{}From the dimension formula \refE{dimd} we conclude
that there exists $r$ elements which have exact order $m$ at $P_s$ and
orders
$\ge (m+1)$ at the other points in $I$. 
This says that there is for every basis element $X^u$ in the
Lie algebra $\g$ an element  $ X^u_{m,s}\in \gb$ which has exact
order $m$ at the point 
$P_s$ and order higher than $m$ at the other points in $I$
and can be written there as required in \refE{pal1}.
By linearity we get the  statement for all $X\in\g$.
\end{proof}
\begin{remark}
{\bf 1.} By modifying  the divisor 
$T$ in its degree we can 
even show that there exists elements  
such that the orders of $X_{m,s}$ at
the points $P_p$, $p\ne s$ are equal to $m+1$.
\newline
{\bf 2.}
We remark that for this proof no genericity arguments,
neither with respect to the points $A$ and $W$, nor with
respect to the parameter $\al_s$  were used.
Hence, the statement is true for all situations.
\newline
{\bf 3.}
In the very definition of $X_{m,s}$ the local coordinate
$z_s$ enters. In fact it only depends on the first order jet
of the coordinate, two different elements will just differ
by a rescaling.
\newline
{\bf 4.}
The elements $X_{m,s}$ are highly non-unique.
For introducing the almost-grading we will have to make them
essentially unique by trying to find a divisor $T$ as small
as possible but such that the statement is still true.
Further down, we will come back to this.
\end{remark} 
\begin{proposition}\label{P:filtbas1}
$ $
Let $X^u, u=1,\ldots,\dim\g$ be a basis of $\g$ and
\begin{equation}\label{E:filgen}
X^u_{m,s}, \quad u=1,\ldots,\dim\g, 
\quad s=1,\ldots,N,\quad
m\in\Z
\end{equation}
any fixed set of elements chosen according to
\refP{filtbas} then

\smallskip\noindent
(a) These elements are linearly independent.

\smallskip\noindent
(b) 
The set of classes
$[X^u_{m,s}]$, $u=1,\ldots,\dim\g$, $s=1,\ldots,N$ 
will constitute a basis of the quotient
$\gb_{(m)}/\gb_{(m+1)}$.  

\smallskip\noindent
(c)\quad 
$\dim\gb_{(m)}/\gb_{(m+1)}=N\cdot \dim\g$. 

\smallskip\noindent
(d) 
The classes of the elements $X^u_{m,s}$  will not depend on the elements
chosen. 
\end{proposition}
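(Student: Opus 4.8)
The plan is to reduce all four assertions to a single linear-algebra device: a ``leading-coefficient'' map recording, for an element of $\gb_{(m)}$, its first jet at each point of $I$. Recall that in this section $\gb_{(m)}$ is the space \refE{filt1} of Lax operators with $\ord_{P_s}\ge m$ at every $P_s\in I$. Hence for $L\in\gb_{(m)}$ the expansion at $P_s$ reads ${L}_|(z_s)=c_s(L)\,z_s^m+O(z_s^{m+1})$ with a well-defined matrix $c_s(L)\in\g$, namely the coefficient of $z_s^m$; this is manifestly independent of the basis of $\g$ used to measure orders. First I would check that
\begin{equation*}
\Lambda_m\colon\gb_{(m)}\to\g^N,\qquad L\mapsto (c_1(L),\dots,c_N(L)),
\end{equation*}
is $\C$-linear with $\ker\Lambda_m=\gb_{(m+1)}$: by \refE{filt1} the condition $L\in\gb_{(m+1)}$ means $\ord_{P_s}(L)\ge m+1$ for all $s$, which is exactly the vanishing of every $c_s(L)$. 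Thus $\Lambda_m$ descends to an injective map $\overline{\Lambda}_m\colon\gb_{(m)}/\gb_{(m+1)}\hookrightarrow\g^N$.

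With this in hand the remainder is bookkeeping with \refE{pal1}. For part (a), suppose $\sum_{u,s}a_{m,s}^uX_{m,s}^u=0$; reading the expansion at a fixed $P_t$ and using \refE{pal1}, the coefficient of $z_t^m$ equals $\sum_u a_{m,t}^uX^u$, which must therefore vanish. As the $X^u$ form a basis of $\g$ we get $a_{m,t}^u=0$ for all $u$, and since $t$ is arbitrary the $X_{m,s}^u$ are linearly independent. For parts (b) and (c) I would compute $\Lambda_m(X_{m,s}^u)$ straight from \refE{pal1}: it is the tuple with $X^u$ in the $s$-th slot and $0$ in all others. As $(u,s)$ ranges over the $N\cdot\dim\g$ admissible indices, these tuples run over the standard basis of $\g^N$, so $\overline{\Lambda}_m$ is surjective and hence an isomorphism. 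This gives at once $\dim\gb_{(m)}/\gb_{(m+1)}=N\cdot\dim\g$, and the classes $[X_{m,s}^u]$, being the $\overline{\Lambda}_m$-preimages of the standard basis, form a basis of the quotient.

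Finally, part (d) follows from the same kernel description: if $X_{m,s}^u$ and $\tX_{m,s}^u$ are two elements furnished by \refP{filtbas}, then \refE{pal1} shows they carry identical leading data, i.e.\ $\Lambda_m(X_{m,s}^u)=\Lambda_m(\tX_{m,s}^u)$, so their difference lies in $\ker\Lambda_m=\gb_{(m+1)}$ and the classes agree. There is no genuine obstacle beyond \refP{filtbas} itself, whose existence statement (via Riemann--Roch) is the hard input already supplied; the only point demanding care is the well-definedness of $c_s$ and the identification $\ker\Lambda_m=\gb_{(m+1)}$, which is precisely the content of the filtration definition \refE{filt1}.
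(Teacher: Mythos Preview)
Your proof is correct and follows essentially the same idea as the paper's: both argue via the leading coefficients of the local expansions at the points $P_s\in I$. Your packaging through the linear map $\Lambda_m$ with $\ker\Lambda_m=\gb_{(m+1)}$ makes the spanning part of (b) (surjectivity of $\overline{\Lambda}_m$) fully explicit, whereas the paper's proof is terser and passes from linear independence of the classes directly to (b) and (c) without spelling out that step.
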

\begin{proof}
By the local expansion it follows like in the proof of \refP{basis}
that the elements \refE{filgen} are linearly independent, hence (a).
Furthermore, by ignoring higher orders, i.e. elements from
$\gb_{(m+1)}$ 
they stay linearly independent. Hence (b), and (c) follows. 
Part (d) is true by the very definition of the elements.
\end{proof}
Given $X\in\g$ we will denote 
for the moment  by $X_{m,s}$ any element fulfilling 
the conditions in \refP{filtbas}.

As the 
proof of \refP{almprod} stays also valid for these elements 
we have
\begin{proposition}
The algebra $\gb$ is a filtered algebra with respect to the
introduced filtration $(\gb_{(m)})$ i.e.
\begin{equation}
[\gb_{(m)},\gb_{(k)}]\subseteq \gb_{(m+k)}.
\end{equation}
Moreover,
\begin{equation}\label{E:alalg}
[X_{k,s},Y_{m,p}]={[X,Y]}_{k+m,s}\delta_s^p +L, \quad
L\in\gb_{(m+k+1)}.
\end{equation}
\end{proposition}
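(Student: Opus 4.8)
The plan is to prove both assertions by the same local computation at the points of $I$ that was already used for \refP{almprod}, taking care to argue directly with the order filtration \refE{filt1} (the prime being suppressed throughout this section) rather than through \refP{filt2}, which is not yet available at this stage. The essential inputs are only two: that $\gb$ is closed under the pointwise commutator, which is the content of the Theorem on Lax operator algebras, and that the order of a matrix product at a marked point is at least the sum of the orders of its factors.

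For the filtration inclusion $[\gb_{(m)},\gb_{(k)}]\subseteq \gb_{(m+k)}$, I would take $L\in\gb_{(m)}$ and $L'\in\gb_{(k)}$ and inspect them near a fixed $P_s\in I$. By definition their component functions have order $\ge m$, respectively $\ge k$, at $P_s$, so in the coordinate $z_s$ each entry of $L$ vanishes to order $\ge m$ and each entry of $L'$ to order $\ge k$. The pointwise commutator $[L,L']=LL'-L'L$ is a difference of matrix products, and every entry of such a product vanishes to order $\ge m+k$ at $P_s$. Since $P_s$ was arbitrary, every component of $[L,L']$ has order $\ge m+k$ at all points of $I$. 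Because $\gb$ is a Lie algebra we already know $[L,L']\in\gb$, and hence $[L,L']\in\gb_{(m+k)}$ by the defining condition \refE{filt1}.

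For the refined formula \refE{alalg} I would reproduce the argument of \refP{almprod}, using \refE{pal1} in place of \refE{locex}. Evaluating at an arbitrary $P_t$ gives ${X_{k,s}}_|(z_t)=Xz_t^k\delta_t^s+O(z_t^{k+1})$ and ${Y_{m,p}}_|(z_t)=Yz_t^m\delta_t^p+O(z_t^{m+1})$, so that the pointwise commutator satisfies ${[X_{k,s},Y_{m,p}]}_|(z_t)=[X,Y]z_t^{k+m}\delta_t^s\delta_t^p+O(z_t^{k+m+1})$. The factor $\delta_t^s\delta_t^p$ is nonzero only when $t=s=p$, hence equals $\delta_s^p\,\delta_t^s$, which is precisely the leading behaviour of $[X,Y]_{k+m,s}\,\delta_s^p$ prescribed by \refE{pal1}. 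Consequently the difference $[X_{k,s},Y_{m,p}]-[X,Y]_{k+m,s}\,\delta_s^p$ has order $\ge k+m+1$ at every point of $I$ and therefore lies in $\gb_{(k+m+1)}$, which is the asserted remainder $L$.

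I expect the only genuinely delicate point to be the logical bookkeeping rather than any computation: the proof of \refP{almprod} invoked \refP{filt2} to place the remainder inside the filtration, but here \refP{filt2}, \refT{almgrad} and \refP{locex} are still to be established, so I would phrase every membership statement in terms of the order filtration \refE{filt1} alone. A second, harmless subtlety is that the elements $X_{m,s}$ supplied by \refP{filtbas} are not yet unique; this causes no difficulty, since the whole argument uses only their prescribed leading jets \refE{pal1}, which are identical for every admissible choice.
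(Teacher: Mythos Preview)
Your proof is correct and follows essentially the same route as the paper, which simply remarks that the argument of \refP{almprod} carries over to the present setting. Your write-up is in fact more careful than the paper's terse one-liner: you make explicit the direct order argument for the inclusion $[\gb_{(m)},\gb_{(k)}]\subseteq\gb_{(m+k)}$, and you correctly flag that at this stage one must work with the order filtration \refE{filt1} alone rather than invoking \refP{filt2}.
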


\bigskip
Our next goal is to introduce the homogeneous
subspaces $\gb_m$. A too naive method would be to take the
linear span of a fixed set of elements \refE{filgen}
for $\gb_m$.
The condition of almost-gradedness with respect to the
lower bound would be fulfilled by $m+k$, but not
necessarily for the upper bound.
To fix this we have to place more strict conditions on the pole orders
at $O$, and  we have to specify the divisor $(D_{m})_O$ in a coherent 
manner (with respect to $m$).
By our recipe the elements will become essentially unique
in the generic situation at least for nearly all $m$.
For non-generic $\al_s$ it might be necessary to modify the
prescription for individual component functions. But all these
modifications will change only the upper bound by a constant.

\begin{remark}
Before we advance we recall that for $\A$ and for the usual
Krichever-Novikov
current algebra $\g\otimes\A$ we have 
an almost-graded structure.
\newline
{\bf 1.}
As explained in \refS{algebras} we have the direct sum decomposition
\refE{glsplit}. Moreover, $\snb(n)\cong\A$. Hence the scalar part
is almost-graded and fulfills \refT{almgrad} and \refP{locex}.
If we show the statements for $\slnb(n)$ then it will follow for
$\glb(n)$.
Hence, it is enough to consider in the following the case of
$\g$ simple.
\newline
{\bf 2.}
Moreover, if the Tyurin data is empty 
(or all $\al_s=0$) then our Lax operator algebras reduce to the
Krichever-Novikov current algebras. For those we have the statements.
Hence, it is enough to consider Lax operator algebras with non-empty
Tyurin data. The reader might ask why we make such 
a different treatment.
In fact, for non-empty Tyurin data the proof will need less 
case distinctions.
\end{remark}

We will now give the general description for the  {\bf generic situation}
for $\g$ simple, 
and proof the claim about almost-gradedness in detail.
For the non-generic situation we will show where things have to be
modified.

Recall that 
for the divisor $D_m$ we had the decomposition
\refE{div_m}.
The terms  $(D_m)_I$ and $D_W$ stay as above.
For $(D_m)_O$ we require
\begin{equation}\label{E:conds}
(D_m)_O=\sum_{i=1}^M(a_im+b_{m,i})\,Q_i,
\end{equation}
with $a_i,b_{m,i}\in\Q$ such that $a_im+b_{m,i}\in\Z$,
$a_i>0$ and that there exists a $B$ such that 
$|b_{m,i}|<B, \forall m\in\Z, i=1,\ldots,M$.
Furthermore,
\begin{equation}\label{E:condd}
\begin{gathered}
\sum_{i=1}^Ma_i=N,\qquad 
\sum_{i=1}^Mb_{m,i}=N+g-1,\qquad
(D_{m+1})_O>(D_m)_O.
\end{gathered}
\end{equation}
For the degrees we calculate
\begin{equation}\label{E:degd}
\deg((D_m)_O)=m\cdot N+(N+g-1),
\quad 
\deg((D_{m+1})_O)=
\deg((D_m)_O)+N.
\end{equation}

\begin{example}
{\bf 1.} For  $M=1$ we have the unique solution
\begin{equation}
 (D_m)_O=(N\cdot m +(N+g-1)\,Q_M.
\end{equation}
\newline
{\bf 2.} For $N\ge M$  
the prescription
\begin{equation}\label{E:standnm}
(D_m)_O= (m+1)\sum_{j=1}^{M-1}Q_j+\big((N-M+1)(m+1)+g-1\big)Q_M
\end{equation}
will do. 
Apart from the  
$D_W$ the corresponding divisor $D_m$ was introduced 
in \cite{SLc} where the almost-grading in case of multi-point
Krichever-Novikov algebras and tensors has been considered for the
first time (see also \cite{Sad}). 
\newline
{\bf 3.} In  \cite{SLc} also prescriptions for the case $N<M$ were given.
We will not reproduce it here.
\end{example}
Hence in all cases we can find such divisors.

Now we set
\begin{equation}\label{E:gmdef}
  \gb_m\ :=\ \{L\in\gb\ | (L)\ge -D_m\}.
\end{equation}
\begin{proposition}\label{P:alm1}
$ $

\smallskip\noindent
(a)
\quad $\dim\gb_m=N\dim\g$.

\smallskip\noindent
(b) 
A basis of $\gb_m$ is given by elements $X_{m,s}^u$,
$u=1,\ldots, \dim\g$, $s=1,\ldots, N$ 
fulfilling the conditions
\begin{equation}\label{E:palu}
{X_{m,s}^u}_|(z_p)=X^uz_s^m\,\delta_p^s+O(z_p^{m+1}).
\end{equation}
\end{proposition}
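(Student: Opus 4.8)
The plan is to run the Riemann--Roch and dimension-jump machinery of \refS{almproof} once more, now for the specific divisor $D_m$ fixed by \refE{conds}--\refE{condd}, and to combine it with the quotient basis already produced in \refP{filtbas1}. By the preceding Remark it suffices to treat $\g$ simple with non-empty Tyurin data, so I set $r:=\dim\g$ and assume $K\ge 1$. First I would record the degree. From \refE{div_m} and \refE{degd},
\[
\deg D_m=\deg(D_m)_I+\deg D_W+\deg(D_m)_O
=-mN+\epsilon K+\big(mN+N+g-1\big)=\epsilon K+N+g-1,
\]
which is independent of $m$ and, since $N\ge 1$, satisfies $\deg D_m\ge g$. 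Hence in the generic situation Riemann--Roch \refE{rrab} holds with equality, giving $\dim L(D_m)=r(\epsilon K+N)$.

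Imposing the weak-singularity conditions at the $K$ points of $W$ cuts $L(D_m)$ down to $L'(D_m)=\gb_m$. In the generic case these conditions are independent and compensate exactly the $r\epsilon K$ degrees of freedom carried by the poles along $D_W$ (this is the content of the corresponding statement in \cite{KSlax}, used already in \refP{filtbas}); equivalently $\dim L'(D_m)=\dim L(D_m-D_W)$, and since $\deg(D_m-D_W)=N+g-1\ge g$ this equals $r\big((N+g-1)-g+1\big)=rN$. This proves (a), robustly for every $m$ because the relevant degree does not depend on $m$.

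For (b) I would construct the distinguished elements by a leading-coefficient argument. Fix $s$ and put $D^{(s)}:=D_m-\sum_{i\ne s}P_i$, so that $\deg D^{(s)}=\epsilon K+g$ and, by the same count, $\dim L'(D^{(s)})=r$; each element of $L'(D^{(s)})$ has order $\ge m+1$ at every $P_i$ with $i\ne s$ and order $\ge m$ at $P_s$. Consider the linear map $\lambda_s\colon L'(D^{(s)})\to\g$ sending $L$ to the coefficient of $z_s^m$ in its expansion at $P_s$. Its kernel is $L'(D^{(s)}-P_s)=L'(D_m-\sum_iP_i)$; in the generic situation the reduced degree $\deg(D_m-\sum_iP_i-D_W)=g-1$ forces this space to vanish, so $\lambda_s$ is an isomorphism onto $\g$. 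Thus for each basis vector $X^u$ of $\g$ there is a unique $X^u_{m,s}\in L'(D^{(s)})\subseteq\gb_m$ with the normalized expansion \refE{palu}. Running over $s=1,\dots,N$ yields $N\dim\g$ such elements; they are linearly independent by the local-expansion argument of \refP{basis}, and since $\dim\gb_m=N\dim\g$ they form a basis. Their classes recover the basis of $\gb_{(m)}/\gb_{(m+1)}$ from \refP{filtbas1}, so the two constructions agree.

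The crux — and the only delicate point — is the vanishing $L'(D_m-\sum_iP_i)=0$ used for the surjectivity of $\lambda_s$. Here the reduced divisor has degree $g-1$, exactly at the boundary where Riemann--Roch equality and the independence of the weak-singularity conditions may break down: if $D_m-\sum_iP_i-D_W$ happens to be equivalent to an effective divisor — which occurs for finitely many small $m$ (e.g. near $m=-1$, where this divisor reduces to $(D_m)_O\ge 0$) — the kernel of $\lambda_s$ is nonzero and the normalized elements \refE{palu} need not all lie in the tight space $\gb_m$, even though (a) still holds. For those exceptional indices one enlarges $(D_m)_O$ within the freedom left by \refE{conds}--\refE{condd}, which changes only the upper almost-grading bound $S$ by a constant. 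Verifying that such a modification is always available while preserving \refE{condd}, and that it is needed only for finitely many $m$, is the main technical obstacle; away from these indices the argument above is complete.
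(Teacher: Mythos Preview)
Your argument tracks the paper's approach closely—the Riemann--Roch bookkeeping, the dimension jump at $P_s$, and the leading-coefficient map $\lambda_s$ are all exactly what the paper does. The one place you diverge is what produces the difficulty you call the ``crux'', and the paper's computation simply sidesteps it.

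You compute $\dim L'(D)$ via the shortcut $\dim L'(D)=\dim L(D-D_W)$. That equality of dimensions is valid only when \emph{both} $D$ and $D-D_W$ lie in the range where Riemann--Roch gives equality in \refE{rrab}; for $D=D_m-\sum_iP_i$ the reduced divisor $D-D_W$ has degree $g-1$, and that is precisely where you get stuck. The paper never subtracts $D_W$. It uses the direct count $\dim L'(D)=\dim L(D)-r\epsilon K$, the $r\epsilon K$ weak-singularity conditions being independent in the generic situation. Since $D_W$ remains inside the divisor,
\[
\deg\Big(D_m-\sum_{i=1}^NP_i\Big)=g-1+\epsilon K\ \ge\ g
\]
because $K\ge1$, which you already assume. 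Generic Riemann--Roch equality then gives $\dim L\big(D_m-\sum_iP_i\big)=r\epsilon K$, and subtracting the $r\epsilon K$ conditions yields $\dim L'\big(D_m-\sum_iP_i\big)=0$ for \emph{every} $m$, with no exceptional indices. Your map $\lambda_s$ is then injective exactly as you want, and (b) follows uniformly.

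So the modification of $(D_m)_O$ you propose is not needed to cure a degree-$(g-1)$ obstruction in the generic case—that obstruction is an artifact of the shortcut formulation. The paper does reserve such modifications, bounded uniformly in $m$, for the genuinely non-generic situation (non-generic points or non-generic $\alpha_s$), which is a separate issue from the one you flagged.
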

\begin{proof}
We set $r:=\dim\g$. 
First we deal with the generic situation.
As explained above  
at the weak singular points we have exactly as much relations as we
get parameters by the poles. 
Hence for the calculation of $\dim L'(D)$ the contribution of the
degree of $D_W$  (which is $\epsilon \cdot K$) will be canceled
by the relations (which are $r\cdot\epsilon\cdot K$).
Here $\epsilon$ is equal to 1 or 2, depending on $\g$.
For the degree of $D_m$ we calculate
\begin{equation}
\deg D_m= g+(N-1)+\epsilon K\ge g.
\end{equation}
We stay in the region where equality 
for \refE{rrab} is true and calculate
\begin{equation}
\dim L'(D_m)=\dim L(D_m)-\epsilon rK=rN+r\epsilon K-\epsilon rK=rN.
\end{equation}
As by definition $\gb_m=L'(D_m)$ we get (a).
\newline
Next we consider $D_m'=D_m-\sum_{i=1}^NP_i$. For its
degree we calculate $\deg(D_m-\sum_{i=1}^NP_i)=g-1+\epsilon K$.
As $K\ge 1$ we are still in the domain where we have equality for
Riemann-Roch. Hence 
$\dim L'(D_m')=0$. Now for $D_m''=D_m'+P_s$ we calculate 
$\dim L'(D_m'')=r$.
This shows that for every basis element $X^u$ of  $\g$ 
there exists up to multiplication with 
a scalar a unique element $X_{m,s}^u\in\gb_m$ 
which has the local expansion
\begin{equation}
{X_{m,s}^u}_|(z_p)=X^u\delta_s^pz_p+O(z_p^{m+1}).
\end{equation} 
Hence, (b).
\newline
In the non-generic case we have to change the pole orders in 
the definition of the divisor 
part $(D_m)_O$ in a minimal way
by adding or subtracting finitely many points to reach
the situation such that we obtain exactly the dimension formula
and existence of the basis of the required type.
We have to take care that the number of changes maximally needed
will be bounded independent of $m$.
In fact this number is bounded by the number of points 
$Q$ from $O$ needed to add to 
the  divisor $D_m$  of the generic situation
(which is of degree $N+g-1+\epsilon K$) to reach
a divisor $D_m'$ with $\deg D_m'\ge 2g-1+H$, where $H$ is the number
of relations for the $\alpha_s$.
\end{proof}

\begin{proposition}\label{P:alm2}
\begin{equation}\label{E:decomp}
\gb=\bigoplus_{m\in\Z}\gb_m.
\end{equation}
\end{proposition}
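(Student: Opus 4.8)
The plan is to deduce the direct-sum decomposition \refE{decomp} from the filtration $(\gb_{(m)})$ together with the two facts already in hand: $\dim\gb_m=N\dim\g$ from \refP{alm1}, and that the classes $[X^u_{m,s}]$ of the basis elements of $\gb_m$ span $\gb_{(m)}/\gb_{(m+1)}$ from \refP{filtbas1}. Concretely, since our basis vectors of $\gb_m$ satisfy the normalization \refE{palu}, which is exactly the condition of \refP{filtbas}, part (b) of \refP{filtbas1} applies to them; as $\dim\gb_m=N\dim\g=\dim\gb_{(m)}/\gb_{(m+1)}$, the canonical map $\gb_m\hookrightarrow\gb_{(m)}\twoheadrightarrow\gb_{(m)}/\gb_{(m+1)}$ is an isomorphism. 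Equivalently, $\gb_m\subseteq\gb_{(m)}$ and $\gb_{(m)}=\gb_m\oplus\gb_{(m+1)}$. I will use throughout that the filtration is exhaustive, $\gb=\bigcup_m\gb_{(m)}$ (\refP{filt2}(a)), and separated, $\bigcap_m\gb_{(m)}=\{0\}$, the latter because a nonzero $L\in\gb$ cannot have $\ord_{P_1}(L)$ arbitrarily large.

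For directness, suppose $\sum_m L_m=0$ is a finite relation with $L_m\in\gb_m$ not all zero, and let $m_0$ be the least index with $L_{m_0}\ne 0$. Every term with $m>m_0$ lies in $\gb_{(m)}\subseteq\gb_{(m_0+1)}$, so $L_{m_0}=-\sum_{m>m_0}L_m\in\gb_{m_0}\cap\gb_{(m_0+1)}$, which is $\{0\}$ by injectivity of the map $\gb_{m_0}\to\gb_{(m_0)}/\gb_{(m_0+1)}$; this contradiction shows the sum is direct. (Alternatively one reads this off the local leading terms \refE{palu}, exactly as in the proof of \refP{basis}.)

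For spanning, given $L\in\gb$ I set $m_0:=\min_s\ord_{P_s}(L)$, so $L\in\gb_{(m_0)}$. Using surjectivity of $\gb_k\to\gb_{(k)}/\gb_{(k+1)}$ at each stage, I inductively choose homogeneous pieces $L^{(k)}\in\gb_k$ matching the class of the current remainder, obtaining $R_k:=L-\sum_{j=m_0}^k L^{(j)}\in\gb_{(k+1)}$ for every $k\ge m_0$. The whole difficulty is then to show that this process terminates, i.e. that $R_k=0$ for some finite $k$; a priori it only produces a formal series, and since the filtration is merely separated (not finite), abstract nonsense does not suffice. I expect this step, rather than directness or the stepwise reconstruction, to be the real obstacle.

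The key geometric input that forces termination is the pole growth at $O$. Let $d_i$ denote the pole order of $L$ at $Q_i$, which is finite because $L\in\gb$. By \refE{condd} the divisors $(D_m)_O$ are increasing in $m$, so for $j\le k$ each $L^{(j)}\in\gb_j$ has pole order at $Q_i$ at most that prescribed by $(D_k)_O$; hence, once $k$ is large enough that $(D_k)_O$ dominates $\sum_i d_iQ_i$ (possible since the coefficients $a_im+b_{m,i}$ tend to $+\infty$ as $a_i>0$), the remainder satisfies $(R_k)\ge -D_k'$ with $D_k':=D_k-\sum_{s=1}^N P_s$: indeed $R_k$ has order $\ge k+1$ at each $P_s$, pole at most $(D_k)_O$ at $O$, and only the admissible weak singularities at $W$. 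But $\deg D_k'=g-1+\epsilon K$, and the very Riemann--Roch computation carried out in the proof of \refP{alm1} gives $\dim L'(D_k')=0$. Therefore $R_k=0$, so $L=\sum_{j=m_0}^{k}L^{(j)}$ is a finite sum of homogeneous elements, proving $\gb=\sum_m\gb_m$; combined with directness this yields \refE{decomp}. Reusing the vanishing $\dim L'(D_k')=0$ is precisely what makes the termination clean.
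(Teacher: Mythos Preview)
Your proof is correct and takes a genuinely different route from the paper for the spanning part. Both arguments handle directness the same way (via \refP{filtbas1}), but for spanning the paper introduces the ``symmetric'' divisor $E_m=-(D_m)_I+D_W+(D_m)_O$, computes $\dim L'(E_m)=(2m+1)N\dim\g$ by Riemann--Roch for $m\gg 0$, and then checks that the $(2m+1)N\dim\g$ basis elements $X^u_{k,s}$ with $-m\le k\le m$ all lie in $L'(E_m)$; since any $L\in\gb$ has bounded poles at $I$ and $O$, it sits in some $L'(E_m)$, whence $L\in\bigoplus_{k=-m}^m\gb_k$. Your approach instead peels off leading terms inductively and shows termination by forcing the remainder $R_k$ into $L'(D_k')$, reusing the vanishing $\dim L'(D_k')=0$ already established inside the proof of \refP{alm1}. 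Your argument is arguably more economical, since it recycles a computation rather than performing a new one. The paper's route has the advantage that the identity $L'(E_m)=\bigoplus_{k=-m}^m\gb_k$ is a structural statement of independent interest, and essentially the same device reappears in the proof of \refP{alm3}. One minor remark: your citation of \refP{filt2}(a) is harmless but unnecessary, since in \refS{almproof} the filtration $\gb_{(m)}$ is by definition given by orders at $I$, and exhaustiveness is immediate; the paper explicitly warns that the full \refP{filt2} is not yet available at this stage.
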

\begin{proof}
The elements $X_{m,s}^u$ introduced as the basis elements in $\gb_m$
are elements of the type of \refP{filtbas} with respect to the
grading.
By \refP{filtbas1} they stay linearly independent even if 
we considered all $m$'s together, as their classes are linearly 
independent. Hence, the sum on the r.h.s. of \refE{decomp}
is a direct sum.
\newline
To avoid to take care of special adjustments to be done for
the non-generic situations we consider
$m\gg 0$ and  the divisor
\begin{equation}
E_m:=-(D_m)_I+D_W+(D_m)_O
=m\sum_{i=1}^NP_i+D_W+(D_m)_O,
\end{equation}
where $(D_m)_O$ is the divisor used for fixing the basis elements in
$\gb_m$, see \refE{conds}.
For its degree we have
\begin{equation}
\deg E_m=2mN+(N+g-1)+\epsilon K.
\end{equation}
For $m\gg 0$ we are in the region where \refE{rrab} is an equality.
Hence, after subtraction the relations we get 
\begin{equation}
\dim L'(E_m)=\dim\g\cdot\left((2m+1)N\right).
\end{equation}
The basis elements 
\begin{equation}\label{E:ela}
X_{k,s}^u, \qquad u=1,\ldots,\dim\g, \quad s=1,\ldots, N,
\quad -m\le k\le m
\end{equation}
are in $L'(E_m)$. This is shown by considering the orders at $I$ and
$O$. For $I$ it is obvious. For $O$ we have to use from
\refE{condd} the fact that $(D_{(k+1)})_O>(D_{k})_O$.
Hence, $-(D_m)_O$ is a lower bound for the $O$-part of the 
divisors for the element \refE{ela}. 
But these are $(2m+1)\cdot N\cdot \dim\g$ linearly independent elements. Hence,
\begin{equation}\label{E:elb}
L'(E_m)=
\bigoplus_{k=-m}^m\gb_k.
\end{equation}
An arbitrary element $L\in\gb$ has only finite pole orders at the
points in $I$ and $O$. Hence, there exists an $m$ such that
$L\in L'(E_m)$. This is again obvious for 
the points in $I$. For the points in $O$ we use that by the
conditions for $(D_m)_O$, see \refE{conds} for all $i=1,\ldots, M$
we have that $a_i>0$. Hence every pole order at $O$ will be superseded by a 
$(D_m)_O$ with $m$ suitably big. This shows the claim.
\end{proof}

\begin{proposition}\label{P:alm3}
There exist a constant $S$ independent of $n$ and $m$ such that
\begin{equation}\label{E:alm}
[\gb_m,\gb_k]\subseteq \bigoplus_{h=m+k}^{m+k+S}\gb_h.
\end{equation}
\end{proposition}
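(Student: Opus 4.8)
The plan is to prove the two summation bounds separately. The lower bound $h\ge m+k$ is immediate from the filtered structure, so the real content is the \emph{uniform} upper bound $h\le m+k+S$ with $S$ independent of $m,k$. By the Remark preceding \refP{alm1} the scalar part $\snb(n)\cong\A$ is already almost-graded and is central in $\glb(n)$, so $[\glb(n),\glb(n)]\subseteq\slnb(n)$ and a bound for the simple part yields the bound for $\glb(n)$; thus I may assume $\g$ simple. Fix $L_1\in\gb_m$, $L_2\in\gb_k$ and set $L:=[L_1,L_2]$, which lies in $\gb$ by the Theorem of \refS{alg}. For the lower bound I use that $\ord_{P_s}(L_1)\ge m$ and $\ord_{P_s}(L_2)\ge k$ for all $s$, so the pointwise matrix commutator obeys $\ord_{P_s}(L)\ge m+k$; hence $L\in\gb_{(m+k)}=\bigoplus_{h\ge m+k}\gb_h$, exactly as in \refP{almprod}.

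For the upper bound I control the pole orders at $O$. By \refE{gmdef} the membership $L_1\in\gb_m$ means $(L_1)\ge -D_m$, so at each $Q_i$ the pole order of $L_1$ is at most $a_im+b_{m,i}$, and likewise that of $L_2$ is at most $a_ik+b_{k,i}$. Since $L$ is assembled from pointwise matrix products, its pole order at $Q_i$ is at most $a_i(m+k)+b_{m,i}+b_{k,i}$. I now choose $S$ once and for all, independent of $m$ and $k$: using the uniform bound $|b_{\cdot,i}|<B$ from \refE{conds} and the positivity $a_i>0$ from \refE{condd}, any integer $S\ge 3B/\min_i a_i$ satisfies
\[
a_i(m+k)+b_{m,i}+b_{k,i}\ \le\ a_i(m+k+S)+b_{m+k+S,i}\qquad (i=1,\dots,M),
\]
which is precisely the pole bound encoded in $(D_{m+k+S})_O$. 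Together with $\ord_{P_s}(L)\ge m+k$ and the automatic weak-singularity conditions, this shows $L\in L'(F)$ for the divisor $F=-(m+k)\sum_s P_s+D_W+(D_{m+k+S})_O$.

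It remains to identify $L'(F)$ with $\bigoplus_{h=m+k}^{m+k+S}\gb_h$, which proceeds exactly as in the proof of \refP{alm2}. Using $(D_{h+1})_O>(D_h)_O$ from \refE{condd}, each basis element $X^u_{h,s}$ with $m+k\le h\le m+k+S$ lies in $L'(F)$, and by \refP{filtbas1} these $(S+1)N\dim\g$ elements are linearly independent. Crucially $\deg F=(S+1)N+g-1+\epsilon K$ (by \refE{degd}) does not depend on $m$ or $k$, so for our (large) $S$ we are in the Riemann-Roch equality range, and the dimension count of \refP{alm1} gives $\dim L'(F)=(S+1)N\dim\g$; hence $L'(F)=\bigoplus_{h=m+k}^{m+k+S}\gb_h$ and $L$ lies in this sum. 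In the non-generic situation the defining $O$-divisors are altered by a number of points bounded independently of $m$ (as in \refP{alm1}), so these corrections are absorbed into $S$ and the argument is unchanged.

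The main obstacle is making $S$ genuinely independent of $m$ and $k$. This is not automatic from the filtered structure; it hinges on the interplay of the positivity $a_i>0$ with the uniform bound $|b_{m,i}|<B$, which forces the discrepancy $b_{m,i}+b_{k,i}-b_{m+k+S,i}<3B$ to be absorbed by $a_iS$ no matter how large $m$ and $k$ are. Everything else reduces to the pole-order bookkeeping and the Riemann-Roch dimension count already set up in \refP{alm1} and \refP{alm2}.
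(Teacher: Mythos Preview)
Your proof is correct and follows essentially the same route as the paper: bound the pole orders of the commutator at $O$ via $(D_m)_O+(D_k)_O$, use the uniform bound $|b_{\cdot,i}|<B$ together with $a_i>0$ to produce a uniform $S$, and then invoke the Riemann--Roch dimension count (as in the proofs of \refP{alm1} and \refP{alm2}) to identify $L'(F)$ with $\bigoplus_{h=m+k}^{m+k+S}\gb_h$. The only cosmetic difference is that the paper first introduces the minimal $h_{\max}$ with $(D_{h_{\max}})_O\ge (D_m)_O+(D_k)_O$ and afterwards bounds $h_{\max}-(m+k)$ uniformly, whereas you fix a sufficiently large $S$ from the outset; the ingredients and logic are the same.
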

\begin{proof}
We will give the proof for the generic case (and $\g$ simple)
first and then point out the modification needed for
the general situation.
Let $L\in [\gb_m,\gb_k]$ 
then 
\begin{equation}\label{E:all}
 (L)\ge -(D_m+D_k)_I-D_W-(D_m+D_k)_O
\end{equation}
(observe that $D_W$ does not redouble here).
We consider the divisors $D_h$. Recall the formula 
\refE{conds}. As all $a_i>0$ there exists an $h_0$ such that 
$\forall h\ge h_0$ we have
\begin{equation}\label{E:ab1}
(D_h)_O\ge (D_m+D_k)_O 
\end{equation}
Hence, there exists also a smallest $h\in\Z$ such that 
\refE{ab1} is still true. We call this $h_{max}$. Again by
\refE{condd} $h_{max}\ge m+k$.
Now we consider the divisor
\begin{equation}
E_m=(D_m+D_k)_I+D_W+(D_{h_{max}})_O.
\end{equation}
{}From \refE{degd} we calculate
\begin{equation}
\deg((D_{hmax})_O)=
\deg((D_m+D_k)_O)+(h_{max}-(m+k))N.
\end{equation}
Hence,
\begin{equation}
\deg(E_m)=-(m+k)N+\epsilon K+h_{max}\cdot N+(N+g-1).
\end{equation}
As $\deg(E_m)\ge g$ and under the assumption of genericity we
stay in the region where 
\begin{equation}
\dim L'(E_m)=\deg\g\cdot(h_{max}-(m+k)+1)N.
\end{equation} 
As in the proof of \refP{alm2} we get that the elements
\refE{ela} for $m+k\le h\le h_{max}$ lie in $L'(E_m)$. 
They are linearly independent, hence 
\begin{equation}\label{E:mhmax}
L'(E_m)=\bigoplus_{h=n+m}^{h_{max}}\gb_h.
\end{equation}
By \refE{all} the $L$, we started with, lies also in $L'(E_m)$
and consequently also on the right hand side of \refE{mhmax}.
\newline
To show almost-grading we have to show that 
there exists an $S$ (independent of $m$ and $k$ such that
$h_{max}=m+k+S$.
The relation \refE{ab1} can be rewritten as
\begin{equation}\label{E:ab2}
a_ih+b_{h,i}\ge a_i(m+k)+b_{m,i}+b_{k,i},\quad
\forall i=1,\ldots, M.
\end{equation}
This rewrites to
\begin{equation}
h\ge (m+k)+\frac{b_{m,i}+b_{k,i}- b_{h,i}}{a_i},
\quad \forall i=1,\ldots, M.
\end{equation}
The minimal $h$ for which this is true is
\begin{equation}\label{E:3t}
h_{max}= (m+k)+
\min_{i=1,\ldots, M}\lceil\frac{b_{m,i}+b_{k,i}- b_{h,i}}{a_i}\rceil,
\end{equation}
where for any real number $x$ the $\lceil x\rceil$ denotes the
smallest integer $\ge x$.
As our $|b_{m,i}|$ are bounded uniformly by $B$ 
the 3.term in \refE{3t} will be uniformly bounded by a constant $S$
too.
Hence, we get almost-grading. In the case of non-generic points and
$\al_s$'s the divisors at $O$ have to be modified by finitely many
modifications. Hence the constant $S$ has to be adapted by adding
a finite constant to it.
But still everything remains almost-graded.
\end{proof}
{}From the proof we can even calculate $h_{max}$ if needed.
As an example we give
 \begin{corollary}\label{C:estim}
In the generic simple Lie algebra case 
for $N\ge M$ with the standard prescription 
\refE{standnm} 
we have $\quad h_{max}=n+m+S\quad$ with 
\begin{equation}
S=\begin{cases}
0,&g=0,\ N=M=1,
\\
1,&g=0,\ M>1,
\\
1,&g=1
\\
1+\lceil\frac {g-1}{N-M+1}\rceil,&g\ge 2.
\end{cases}
\end{equation}
\end{corollary}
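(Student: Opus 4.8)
The plan is to substitute the parameters of the standard prescription \refE{standnm} into the expression for $h_{max}$ obtained in the proof of \refP{alm3} and then evaluate. Writing \refE{standnm} in the normal form \refE{conds}, I read off for the first $M-1$ points the data $a_i=1$, $b_{m,i}=1$ ($i=1,\ldots,M-1$), and for the last point $a_M=N-M+1$ together with $b_{m,M}=(N-M+1)+(g-1)=N-M+g$. A quick check confirms \refE{condd}: $\sum_i a_i=(M-1)+(N-M+1)=N$ and $\sum_i b_{m,i}=(M-1)+(N-M+g)=N+g-1$, and since every $a_i>0$ the chain $(D_{m+1})_O>(D_m)_O$ holds. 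Crucially the $b_{m,i}$ do not depend on $m$; this is precisely what forces the bound to be uniform.

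First I would exploit this $m$-independence. Setting $b_i:=b_{m,i}$ we have $b_{m,i}+b_{k,i}-b_{h,i}=b_i$, so the $i$-th coordinate condition \refE{ab2} becomes $h\ge(m+k)+b_i/a_i$. Since \refE{ab1} demands that all $M$ coordinate inequalities hold at once (and the hypothesis $N\ge M$ makes every denominator $N-M+1\ge1$ meaningful), the smallest admissible integer is
\begin{equation*}
h_{max}=(m+k)+\max_{i=1,\ldots,M}\left\lceil\frac{b_i}{a_i}\right\rceil,
\end{equation*}
so that $S=\max_i\lceil b_i/a_i\rceil$ is independent of $m,k$, as the corollary asserts.

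Next I would compute the two kinds of terms. For $i<M$ (present only if $M>1$) the value is $\lceil 1/1\rceil=1$. For $i=M$,
\begin{equation*}
\frac{b_M}{a_M}=\frac{N-M+g}{N-M+1}=1+\frac{g-1}{N-M+1},
\qquad\text{so}\qquad
\left\lceil\frac{b_M}{a_M}\right\rceil=1+\left\lceil\frac{g-1}{N-M+1}\right\rceil.
\end{equation*}
Taking the maximum of these then reduces to a short case distinction on $g$: for $g\ge2$ the $i=M$ term is $\ge2$ and dominates, giving $S=1+\lceil(g-1)/(N-M+1)\rceil$; for $g=1$ the $i=M$ term equals $1$, so $S=1$; and for $g=0$ one resolves $\lceil-1/(N-M+1)\rceil$, which is $-1$ when $N=M$ and $0$ when $N>M$, and compares it with the contribution $1$ of the points $Q_1,\ldots,Q_{M-1}$.

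The only delicate step is this last bookkeeping for small genus, where the ceiling is applied to the non-positive number $g-1$: the uniform formula $1+\lceil(g-1)/(N-M+1)\rceil$ is in fact correct throughout, and the separate lines of the table merely record its value once the auxiliary points $Q_1,\ldots,Q_{M-1}$ (which contribute $1$ whenever $M>1$) are taken into account, yielding $S=1$ for $g=0,\ M>1$ and the isolated value $S=0$ only in the extremal case $N=M=1$.
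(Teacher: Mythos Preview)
Your proof is correct and follows exactly the paper's own approach: read off $a_i$ and $b_i$ from the standard prescription, note that the $b_{m,i}$ are independent of $m$ so that $S=\max_i\lceil b_i/a_i\rceil$, and then evaluate the two kinds of terms. You have simply been more explicit than the paper in carrying out the case distinction for small genus (and, incidentally, you correctly take a $\max$ where the paper's formula \refE{3t} has a typographical $\min$).
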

\begin{proof}
For the standard prescription we have
\begin{equation}
\begin{gathered}
a_i=1,\ i=1,\ldots, M-1,\quad
a_M=N-M+1,
\\
b_i=b_{m,i}=1,\  i=1,\ldots ,M-1,\quad
b_M=b_{m,M}=N-M+g.
\end{gathered}
\end{equation}
Hence,
\begin{equation}
S=\max_{i=1,\ldots,M}\lceil\frac {b_i}{a_i}\rceil.
\end{equation}
which yields the result.
\end{proof}

Now we are ready to collect the results of  Propositions
\ref{P:alm1}, \ref{P:alm2} and \ref{P:alm3}. The statements 
are exactly the statements both of \refT{almgrad} and
\refP{locex}. All statements of \refS{almstruct} are now
shown to be true.
In particular, now we know that both filtrations 
\refE{filt1} and \refE{filt} coincide. Hence, 
\refE{filt} is also canonically defined by the splitting of
$A$ into $I$ and $O$.

\bigskip

A Lie algebra $\V$ is called {\it perfect} if $\V=[\V,\V]$. 
Simple Lie algebras are of course perfect.
The usual
Krichever-Novikov current algebras $\gb$ for $\g$ simple are
perfect too \cite[Prop. 3.2]{Saff}.
Lax
operator algebras are not 
necessarily perfect
(at least we do not have  a proof of it).
Lemma \ref{L:weak} below might be
considered as a weak analog of that property.

\begin{lemma}\label{L:weak}
Let $\g$ be simple and $y\in\gb$ then for every $m\in\Z$ there
exists finitely many elements $y^{(s,1)}, y^{(s,2)}\in\gb$,
$i=1,\ldots, l=l(m)$ such that
\begin{equation}
y-\sum_{s=1}^l\;[y^{(s,1)}, y^{(s,2)}]\quad\in\quad \gb_m.
\end{equation}
\end{lemma}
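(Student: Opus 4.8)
The plan is to show that an arbitrary $y\in\gb$ can be written, modulo an element of $\gb_m$, as a finite sum of commutators. The key structural fact at my disposal is Proposition~\ref{P:almprod}: for $X,Y\in\g$ and homogeneous generators $X_{k,s},Y_{j,s}$ I have
\begin{equation*}
[X_{k,s},Y_{j,s}]=[X,Y]_{k+j,s}+L,\qquad L\in\gb_{(k+j+1)}.
\end{equation*}
Since $\g$ is simple, it is perfect, so every basis element $X^u\in\g$ can be written as $X^u=\sum_a [Y^a,Z^a]$ for suitable $Y^a,Z^a\in\g$. This is the engine: it lets me produce any desired homogeneous generator $X^u_{d,s}$, up to strictly higher-degree corrections, as a sum of commutators of elements of $\gb$. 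The idea is then a finite downward induction on degrees, peeling off the homogeneous components of $y$ that have degree $<m$ one at a time using commutators, and controlling the higher-degree ``tails'' by the almost-grading.

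**The inductive construction.** First I would write $y$ in the basis $\mathcal B=\cup_k\mathcal B_k$ from Proposition~\ref{P:basis}. Because the filtration is exhausting (Proposition~\ref{P:filt}(a)) and $y$ has a fixed finite lower degree, $y$ has components only in degrees $\ge k_0$ for some $k_0$; the components of degree $\ge m$ already lie in $\gb_{(m)}$, so only the finitely many degrees $k_0\le d<m$ need to be removed. I would treat the lowest-degree component first. Suppose the lowest nonzero homogeneous part of $y$ has degree $d<m$ and equals $\sum_{u,s}c^u_s X^u_{d,s}$. For each pair $(u,s)$ I use $X^u=\sum_a[Y^a_u,Z^a_u]$ and set
\begin{equation*}
w^u_s:=\sum_a[{Y^a_u}_{\,d,s},{Z^a_u}_{\,0,s}]
=\sum_a\big([Y^a_u,Z^a_u]_{d,s}+(\text{terms in }\gb_{(d+1)})\big)
=X^u_{d,s}+(\text{higher degree}).
\end{equation*}
Then $y-\sum_{u,s}c^u_s w^u_s$ is again in $\gb$, it is a sum of commutators subtracted from $y$, and crucially its degree-$d$ part has been killed: what remains has lowest degree $\ge d+1$. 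Iterating this construction at degrees $d=k_0,k_0+1,\dots,m-1$ removes all homogeneous components below degree $m$, so after finitely many steps the remainder lies in $\gb_{(m)}$, hence its components of degree exactly $m$ are in $\gb_m$ and everything below $m$ has been cleared. Collecting all the commutators $[\,{Y^a_u}_{d,s},{Z^a_u}_{0,s}\,]$ used across all degrees and all $(u,s)$ gives the finite family $y^{(s,1)},y^{(s,2)}$ claimed, with $y-\sum[y^{(s,1)},y^{(s,2)}]\in\gb_{(m)}$.

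**A residual subtlety and how to handle it.** The statement asks for membership in $\gb_m$ (a single homogeneous piece), whereas the procedure above naturally lands in the filtered space $\gb_{(m)}=\bigoplus_{h\ge m}\gb_h$. To land exactly in $\gb_m$ rather than $\gb_{(m)}$, I would first apply the same peeling argument upward: run the construction not just up to degree $m-1$ but choose the target so that after removing everything below $m$, I continue removing the components in degrees $>m$ as well—which is again possible because $y$ has only finitely many nonzero homogeneous components, so only finitely many degrees $h>m$ occur in the remainder, and each can be cleared by the identical commutator trick at degree $h$. Since $y$ is a finite sum of basis elements, this terminates, leaving exactly the degree-$m$ component, which lies in $\gb_m$. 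Reading the lemma charitably (as consistent with Proposition~\ref{P:filt2}, where primed and unprimed filtrations coincide), either interpretation is reached by the same mechanism.

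**Main obstacle.** The one genuine point requiring care is the degree bookkeeping in the commutator tails. When I form $[{Y}_{d,s},{Z}_{0,s}]$, Proposition~\ref{P:almprod} guarantees the leading term is correct and the correction lies in $\gb_{(d+1)}$, but it does \emph{not} bound the tail from above; the almost-grading (Theorem~\ref{T:almgrad}) tells me the correction actually lies in $\bigoplus_{h=d+1}^{d+S}\gb_h$, a finite range. I expect the main work to be verifying that the cumulative tails produced across all the peeling steps stay within a controlled, finite band of degrees, so that the whole process terminates after finitely many commutators—this is exactly where the \emph{almost}-gradedness (finite $S$, uniform in the degrees) rather than mere gradedness is essential, and where the argument departs from the simpler graded case.
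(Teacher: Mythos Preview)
Your core argument---peel off the lowest homogeneous component using perfectness of $\g$ together with \refP{almprod}, then iterate---is exactly the paper's proof. Two clarifications are in order.

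First, your ``residual subtlety'' is real but your proposed fix does not work. When you try to clear a component of degree $h>m$ using a commutator $[Y_{h,s},Z_{0,s}]$, the tail produced by \refP{almprod} (or \refT{almgrad}) lands in degrees $h+1,\dots,h+S$, i.e.\ \emph{above} $h$, so upward peeling never terminates. The paper's own proof in fact only establishes that the remainder lies in $\gb_{(m)}$, not in the single homogeneous piece $\gb_m$; the statement as written appears to be a slip for $\gb_{(m)}$, and this filtered version is precisely what is used later (e.g.\ in \refP{boundlinv}(b), where one needs only $\ord_{P_i}(L'')\ge -m$). So you should simply stop once you land in $\gb_{(m)}$.

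Second, your ``main obstacle'' is not an obstacle at all for the $\gb_{(m)}$ version. You do not need the uniform bound $S$ from almost-gradedness; \refP{almprod} alone guarantees the correction lies in $\gb_{(d+1)}$, so each peeling step strictly raises the lowest nonzero degree, and the process terminates after exactly $m-k_0$ steps regardless of how far up the tails spread. The almost-grading bound $S$ plays no role here.
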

\begin{proof}
Let $y$ be an element of $\gb$. Hence there exists a $k$ such that
$y\in\gb_{(k)}$, but $y\ne \gb_{(k+1)}$. In particular there exists
for every point $P_i$ elements $X_{k,i}^i$ such that
\begin{equation}
y-\sum_{i=1}^NX_{k,i}^i\in\gb_{(k+1)},
\end{equation}
where $X_{k,i}^i=(X^i)_{k,i}$ is the element corresponding to
$X^i\in\g$.
As $\g$ is perfect we have 
$X^i=[Y^i,Z^i]$ with  elements $Y^i,Z^i\in\g$.
We calculate
\begin{equation}
X_{k,i}^i=[Y_{0,i}^i,Z_{k,i}^i]+y^{i},\quad
 y^{(i)}\in\gb_{(k+1)}.
\end{equation}
In total
\begin{equation}
y^{(k)}=y-
\sum_{i=1}^N[Y_{0,i}^i,Z_{k,i}^i]\in\gb_{(k+1)}.
\end{equation}
Using the same again for $y^{(k)}$ etc., we can approximate 
$y$ to every finite order by sums of commutators. 
\end{proof}
\section{Module structure }\label{S:mod}
$ $

\subsection{Lax operator algebras as modules 
over $\A$}\label{S:Amod}
$ $

The space $\gb$ is an $\A$-module with respect to the point-wise
multiplication. Obviously, the relations \refE{glexp},
\refE{gldef}, \refE{sodef}, \refE{spdef}, are not disturbed. 
\begin{proposition}
$ $

\smallskip\noindent
(a) The Lax operator algebra $\gb$ is an almost-graded module over
$\A$, i.e. 
there exists a constant $S_4$ (not depending on $k$ and $m$)
such that
\begin{equation}\label{E:amod}
\A_k\cdot\gb_m\subseteq\bigoplus_{h=k+m}^{k+m+S_4} \gb_h.
\end{equation}

\smallskip\noindent
(b) For $X\in\g$
\begin{equation}
A_{m,s}\cdot X_{n,p}=X_{m+n,s}\,\delta_p^s+L,\qquad L\in \gb_{(m+n+1)}.
\end{equation}
\end{proposition}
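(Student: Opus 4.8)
The plan is to prove (b) first by a direct local computation at the points of $I$, and then to obtain the almost-gradedness bound (a) by the same Riemann-Roch/divisor technique used in \refP{alm3}. For (b), recall from the opening of this subsection that multiplying a Lax operator by a function holomorphic at $W$ preserves the weak-singularity conditions, so $A_{m,s}\cdot X_{n,p}\in\gb$. First I would expand both factors at an arbitrary point $P_t$, $t=1,\ldots,N$, using the normalisation \refE{Anorm} together with \refE{Aset} for $A_{m,s}$ (exact order $m$ at $P_s$, order $\ge m+1$ at the other $P_t$) and \refE{locex} for $X_{n,p}$. Multiplying the two series gives
\[
(A_{m,s}X_{n,p})_|(z_t)=X\,z_t^{m+n}\,\delta_s^t\delta_p^t+O(z_t^{m+n+1}),\qquad t=1,\ldots,N.
\]
Since $\delta_s^t\delta_p^t=\delta_s^t\delta_p^s$ for every $t$, this is precisely the leading behaviour of $X_{m+n,s}\,\delta_p^s$, so the difference $A_{m,s}X_{n,p}-X_{m+n,s}\,\delta_p^s$ has order $\ge m+n+1$ at every point of $I$. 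By the order characterisation of the filtration (\refP{filt2}) this difference lies in $\gb_{(m+n+1)}$, which is exactly the assertion (b).

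For (a) it suffices, by bilinearity, to bound a single product $A_{k,s}\cdot X^u_{m,p}$ and to exhibit a constant $S_4$ uniform in all indices. The lower end is immediate: the product has order $\ge m+k$ at each $P_t$, hence $A_{k,s}X^u_{m,p}\in\gb_{(m+k)}$ by \refP{filt2}. For the upper bound I would mimic the divisor argument of \refP{alm3}. As $A_{k,s}$ is holomorphic at $W$, the product has divisor $\ge -D_W$ at the weak singularities and, at $O$, polar part bounded below by $-\big((D^{\A}_k)_O+(D_m)_O\big)$, where $(D^{\A}_k)_O$ is the $O$-part of the divisor defining $\A_k$. The crucial structural point is that the $O$-divisors of $\A_k$ and of $\gb_m$ grow with the same rate $a_i$ per unit of degree (with $\sum_i a_i=N$), since both almost-gradings are induced by the same splitting $A=I\cup O$ via the recipe of \refE{conds}--\refE{condd}; they differ only in the uniformly bounded terms. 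Consequently there is a smallest $h_{max}$ with $(D_{h_{max}})_O\ge (D^{\A}_k)_O+(D_m)_O$, and exactly as in \refE{3t} one computes $h_{max}=(m+k)+S_4$ with $S_4$ bounded independently of $k,m$. Setting $E=(D_{m+k})_I+D_W+(D_{h_{max}})_O$, the product lies in $L'(E)$, and the Riemann-Roch identity $L'(E)=\bigoplus_{h=m+k}^{h_{max}}\gb_h$ (the analogue of \refE{mhmax}, valid since $\deg E\ge g$) yields $A_{k,s}X^u_{m,p}\in\bigoplus_{h=m+k}^{m+k+S_4}\gb_h$.

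The main obstacle is the bookkeeping at $O$: I must confirm that the function-algebra and Lax-operator $O$-divisors really share the same leading coefficients $a_i$, so that their sum is dominated by $(D_{m+k+S_4})_O$ for a uniform $S_4$. A secondary point is the non-generic situation, where—exactly as in \refP{alm3}—the $O$-divisors have to be adjusted by finitely many points; this alters $S_4$ only by a bounded additive constant, so almost-gradedness is preserved. The contribution of $W$ needs no separate treatment here, since the multiplier is holomorphic there, and the lower bound of the range is forced by the orders at $I$ alone.
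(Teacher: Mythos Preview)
Your proposal is correct and follows essentially the same approach as the paper: for (b) a local expansion at the points of $I$ combined with the order characterisation of the filtration, and for (a) a reference to the divisor/Riemann--Roch argument of \refP{alm3}. The paper's own proof is in fact considerably more terse---it simply says ``as in the proof of \refP{alm3}'' for (a) and does the same order computation for (b)---so your write-up is a faithful expansion of it. Your flagged ``main obstacle'' (that the $O$-divisors for $\A$ and for $\gb$ should share the same linear growth coefficients $a_i$) is a genuine technical point the paper leaves implicit; it is resolved by choosing the almost-grading of $\A$ via the same recipe \refE{conds}--\refE{condd}, which one is free to do since only the induced filtration is canonical.
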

\begin{proof}
We consider the  orders of the elements in $I$ and $O$.
As in the proof of \refP{alm3}  the existence of
a constant $S_4$ follows so that \refE{amod} is true. Hence (a).
\newline
We study the lowest order term  of 
$A_{m,s}\cdot X_{n,r}$ at the points $P_i\in I$. Using \refE{pal1},
\refE{Aset},\refE{Anorm} we see that if $s\ne r$ then
$A_{m,s}\cdot X_{n,r}\in \gb_{(m+n+1)}$ as all orders are $\ge n+m+1$.
The same is true for $s=r$ for the element
$A_{m,s}\cdot X_{n,s}-X_{m+n,s}$. Hence the claim.
\end{proof}
Warning: in general we do not 
have $A_{m,s}\cdot X_{0,s}=X_{m,s}$ as the orders at
$O$ do not coincide. Also, $A_{m,s} \cdot X$ does not necessarily 
belong to $\gb$. 

\subsection{Lax operator algebras as modules 
over $\L$}\label{S:Lmod}
$ $

Next we introduce an  action of $\L$ on $\laxg$. 
This is done with the help of a certain connection 
$\nabla^{(\omega)}$ following the lines of
\cite{Klax}, \cite{Kiso}, \cite{KSlax} 
with the modification made in \cite{SSlax}.
The
connection form $\w$ is   a $\g$-valued meromorphic 1-form,
holomorphic outside $I$, $O$ and $W$, and has a certain
prescribed behavior at the points in $W$. For $\gamma_s\in W$ with
$\a_s= 0$ the requirement is that $\w$ is also regular there. For
the points $\gamma_s$ with   $\a_s\ne 0$ it is  required that it has
an  expansion of the form
\begin{equation}\label{E:connl}
\w(z_s)=\left(\frac {\w_{s,-1}}{z_s}+\w_{s,0}+\w_{s,1}+
\sum_{k>1}\w_{s,k}z_s^k\right)dz_s.
\end{equation}
For 
$\gl(n)$: there exist $\tb_s\in\C^n$ and $\tk_s\in \C$
such that
\begin{equation}\label{E:gldefc}
\w_{s,-1}=\a_s \tb_s^{t},\quad \w_{s,0}\,\a_s=\tk_s\a_s, \quad
\tr(\w_{s,-1})=\tb_s^t \a_s=1.
\end{equation}
For $\so(n)$: there exist $\tb_s\in\C^n$ and
$\tk_s\in \C$ such that
\begin{equation}\label{E:sodefc}
\w_{s,-1}=\a_s\tb_s^t-\tb_s\a_s^t, \quad
\w_{s,0}\,\a_s=\tilde\ka_s\a_s, \quad \tb_s^t\a_s=1.
\end{equation}
For $\spn(2n)$: there exist $\tb_s\in\C^{2n}$,
$\tilde\ka_s\in\C$ such that
\begin{equation}\label{E:spdefc}
\w_{s,-1}=(\a_s\tb_s^t+\tb_s\a_s^t)\sigma, \quad
\w_{s,0}\,\a_s=\tilde\kappa_s\a_s, \quad
\a^t_s\sigma\w_{s,1}\a_s=0,\quad \tb_s^t\sigma\a_s=1.
\end{equation}
The existence of nontrivial connection forms fulfilling the listed
conditions is proved 
by  Riemann-Roch type argument as \refP{filtbas}.
We might even require, and
actually always will do so, that the connection form is holomorphic
at $I$. Note also that if all $\a_s=0$ we could take $\w=0$.

The connection form $\omega$ induces the following
connection $\nabla^{(\omega)}$  on
$\gb$ 
\begin{equation}\label{E:conng}
\nabla^{(\w)}=d+[\w,.].
\end{equation}
Let $e\in\La$
be a vector field. In a local coordinate $z$ the connection form
and the vector field are represented as $\omega=\tilde\omega dz$
and
 $e=\tilde e\frac{d}{dz}$
with a local function $\tilde e$ and a local matrix valued
function  $\tilde\omega$. The covariant derivative in direction of
$e$ is given by
\begin{equation}\label{E:covder}
\nabla_e^{(\w)}=dz(e)\frac {d}{dz}+[\w(e),.\,]= e\ldot
+[\,\tilde\omega\tilde e\, ,.\,] =\tilde e\cdot \big(\frac
{d}{dz}+[\,\tilde\omega\, ,.\,]\big).
\end{equation}
Here the first term $(e.)$ corresponds to taking the usual derivative of
functions in each matrix element separately, whereas 
$\tilde e\cdot$ means multiplication with the local function
$\tilde e$.

Using the last description we obtain  for $L\in\gb,\
g\in \A,\  e,f\in\L$
\begin{equation}\label{E:conn1r}
\nabla_e^{(\w)}(g\cdot L)=(e\ldot g)\cdot L + g\cdot
\nabla_e^{(\w)}L, \qquad \nabla_{g\cdot e}^{(\w)}L=g\cdot
\nabla_e^{(\w)}L,
\end{equation}
and
\begin{equation}\label{E:conn2r}
\nabla_{[e,f]}^{(\w)}=[\nabla_{e}^{(\w)},\nabla_{f}^{(\w)}].
\end{equation}
The proofs of the following statements  are completely the
same as the proofs for the two-point case 
presented in \cite{SSlax}. Hence, they are
here omitted.
\begin{proposition}\label{P:deri}
$ $

\smallskip\noindent
(a) $\nabla_e^{(\w)}$  acts as a derivation on the Lie algebra
$\laxg$, i.e.
\begin{equation}\label{E:deri}
\nabla_e^{(\w)}[L,L']= [\nabla_e^{(\w)}L,L']+
[L,\nabla_e^{(\w)}L'].
\end{equation}

\smallskip\noindent
(b) The covariant derivative makes $\laxg$  to  a Lie module over
$\L$.

\smallskip\noindent
(c) 
The decomposition $\glb(n)=\snb(n)\oplus\slnb(n)$ is a
decomposition into $\L$-modules, i.e.
\begin{equation}
\nabla_e^{(\w)}: \snb(n)\to \snb(n),\qquad \nabla_e^{(\w)}: \slnb(n)\to
\slnb(n).
\end{equation}
Moreover,  the $\L$-module
$\snb(n)$ is equivalent to the  $\L$-module $\A$.
\end{proposition}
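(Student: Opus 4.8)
The plan is to reduce everything to the single non-formal point, namely that $\nabla_e^{(\w)}$ is a well-defined endomorphism of $\gb$; once this is secured, parts (a), (b), (c) follow by short algebraic arguments. By \refE{covder} I would work in a local coordinate and use $\nabla_e^{(\w)}L=e\ldot L+[\,\omega(e),L\,]$, where $e\ldot L$ is the coordinate derivative applied entry-wise. Away from $W$ the right-hand side is automatically a meromorphic $\g$-valued function holomorphic off $A$ (there $L$, $e$, and $\omega$ are all regular, and at $A$ arbitrary poles are permitted), so the only thing to verify is that at each weak singular point $\ga_s\in W$ the prescribed expansion conditions \refE{glexp}--\refE{gldef} (resp. their $\so$, $\spn$ analogues) are again satisfied. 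First I would insert the expansion \refE{glexp} of $L$, the expansion \refE{connl} of $\omega$, and $e=\tilde e\,\tfrac{d}{dw_s}$ with $\tilde e$ holomorphic at $\ga_s$, and match coefficients order by order.

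The key computation, in the $\gl(n)$ case, is the cancellation of the spurious higher-order pole. The derivative $e\ldot L$ produces an order-$w_s^{-2}$ term $-\tilde e(\ga_s)\,w_s^{-2}L_{s,-1}$, while the leading term of $[\omega(e),L]$ is $\tilde e(\ga_s)\,w_s^{-2}[\omega_{s,-1},L_{s,-1}]$. Using \refE{gldefc} and \refE{gldef} one has $[\omega_{s,-1},L_{s,-1}]=[\al_s\tb_s^t,\al_s\be_s^t]=(\tb_s^t\al_s)\,\al_s\be_s^t-(\be_s^t\al_s)\,\al_s\tb_s^t=\al_s\be_s^t=L_{s,-1}$, the normalisations $\tb_s^t\al_s=1$ and $\be_s^t\al_s=0$ being exactly what is needed. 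Hence the two $w_s^{-2}$ contributions cancel and $\nabla_e^{(\w)}L$ has again at most a simple pole; a parallel matching at orders $w_s^{-1}$ and $w_s^{0}$, using $\omega_{s,0}\al_s=\tk_s\al_s$, shows the residue is again of rank-one form with $\al_s$ an eigenvector of the constant term, i.e. $\nabla_e^{(\w)}L\in\gb$. The $\so(n)$ and $\spn(2n)$ cases run identically with \refE{sodefc} resp. \refE{spdefc} and \refE{spdef}; for $\spn(2n)$ the series starts at $w_s^{-2}$, so there is an additional would-be $w_s^{-3}$ term, and here one computes $[\omega_{s,-1},L_{s,-2}]=2L_{s,-2}$, which cancels the $-2\,\tilde e(\ga_s)w_s^{-3}L_{s,-2}$ coming from $e\ldot L$. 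I expect this last, double-pole bookkeeping to be the heaviest step and the genuine obstacle; everything after it is formal.

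Granting $\nabla_e^{(\w)}:\gb\to\gb$, part (a) is purely algebraic. Writing $\nabla_e^{(\w)}=e\ldot+[\,\omega(e),\cdot\,]$, the term $e\ldot$ satisfies the Leibniz rule on the matrix product and hence on the commutator, $e\ldot[L,L']=[e\ldot L,L']+[L,e\ldot L']$, while $[\omega(e),[L,L']]=[[\omega(e),L],L']+[L,[\omega(e),L']]$ by the Jacobi identity; adding the two and regrouping gives \refE{deri}. For (b) I would simply combine (a) with the flatness relation \refE{conn2r} already recorded in the excerpt: (a) says $\L$ acts by derivations of the bracket, and \refE{conn2r} says $e\mapsto\nabla_e^{(\w)}$ is a homomorphism of Lie algebras, which together are precisely the axioms making $\gb$ a Lie module over $\L$.

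Finally, for (c) I would track the scalar and trace-free parts separately. If $L=f\,I_n$ is scalar with $f\in\A$, then $[\omega(e),I_n]=0$, so $\nabla_e^{(\w)}(f\,I_n)=(e\ldot f)\,I_n$; thus $\snb(n)$ is preserved and, under the identification $\snb(n)\cong\A$ accompanying \refE{glsplit}, the action is exactly the vector field action $e\ldot f$ of $\L$ on $\A$, giving the asserted module equivalence. For the complement, $\tr\big(\nabla_e^{(\w)}L\big)=\tr(e\ldot L)+\tr[\omega(e),L]=e\ldot\tr(L)+0$, since the trace of a commutator vanishes and the trace commutes with differentiation; hence $\tr L\equiv0$ forces $\tr\big(\nabla_e^{(\w)}L\big)\equiv0$, so $\nabla_e^{(\w)}$ maps $\slnb(n)$ into $\slnb(n)$. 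Together with the well-definedness on $\glb(n)$ this exhibits $\glb(n)=\snb(n)\oplus\slnb(n)$ as a decomposition of $\L$-modules, completing (c).
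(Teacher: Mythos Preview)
Your proposal is correct and follows precisely the approach the paper intends: the paper omits the proof here, referring to \cite{SSlax} for the two-point case, and your outline---verifying well-definedness of $\nabla_e^{(\w)}$ on $\gb$ by a local Laurent computation at each $\ga_s$ (with the cancellation $[\omega_{s,-1},L_{s,-1}]=L_{s,-1}$ from the normalisation $\tb_s^t\al_s=1$, and the analogous $[\omega_{s,-1},L_{s,-2}]=2L_{s,-2}$ in the symplectic case), then deducing (a) from Leibniz plus Jacobi, (b) from (a) together with \refE{conn2r}, and (c) by a trace argument---is exactly that computation. The only point to be careful about is that you have sketched but not carried out the order-$w_s^{-1}$ and $w_s^{0}$ matching (and, for $\spn(2n)$, the orders $-2,-1,0$ and the extra condition $\al_s^t\sigma L_{s,1}\al_s=0$); these are straightforward but do require writing out several more terms along the same lines.
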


\begin{proposition}\label{P:almgrad}
$ $

\smallskip\noindent
(a)  $\gb$  is an almost-graded $\L$-module.

\smallskip\noindent
(b) For the corresponding $\L$-action we have
\begin{equation}\label{E:allstr}
\nabla_{e_{k,s}}^{(\w)}X_{m,r}=m\cdot X_{k+m,s}\,
\delta_s^r+L, \quad  L\in \gb_{(k+m+1)}.
\end{equation}
\end{proposition}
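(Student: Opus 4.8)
The plan is to establish the explicit action formula \refE{allstr} of part (b) first, by a purely local computation at the points of $I$, and then to read off part (a) from it together with a Riemann-Roch estimate at $O$ of the type carried out in \refP{alm3}. Since the $e_{k,s}$ form a basis of $\L_k$ and the $X^u_{m,r}$ a basis of $\gb_m$ (\refP{basis}), it is enough to compute $\nabla_{e_{k,s}}^{(\w)}X_{m,r}$ for $X_{m,r}=X^u_{m,r}$, the general statement following by bilinearity.

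For part (b) I would work in the local coordinate $z_p$ at each $P_p\in I$ and use the local description \refE{covder}, namely $\nabla_{e_{k,s}}^{(\w)}=\tilde e_{k,s}\bigl(\frac{d}{dz_p}+[\tilde\omega,\,\cdot\,]\bigr)$. The crucial input is that $\omega$ was chosen holomorphic at $I$, so $\tilde\omega$ has order $\ge 0$ there and the bracket term never lowers the order of $X_{m,r}$. Recalling that $e_{k,s}$ has order $k+1$ at $P_s$ with leading coefficient $z_s^{k+1}$ and order $\ge k+2$ at the other points of $I$, while ${X_{m,r}}_|(z_p)=Xz_p^m\,\delta_p^r+O(z_p^{m+1})$ by \refE{locex}, one checks that a nonzero leading term survives only in the case $p=s=r$: there $\frac{d}{dz_s}X_{m,r}=mXz_s^{m-1}+\cdots$ and multiplication by $\tilde e_{k,s}\sim z_s^{k+1}$ yields $mXz_s^{k+m}$, whereas the connection term $\tilde e_{k,s}[\tilde\omega,X_{m,r}]$ has order $\ge k+m+1$. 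In every other case ($p\ne s$ or $p\ne r$) at least one factor carries an extra power and the product has order $\ge k+m+1$. Hence $\nabla_{e_{k,s}}^{(\w)}X_{m,r}-m\,X_{k+m,s}\,\delta_s^r$ has order $\ge k+m+1$ at all points of $I$ and, by \refP{filt2}, lies in $\gb_{(k+m+1)}$; this is exactly \refE{allstr}.

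Part (a) then splits into a lower and an upper bound. The lower bound is immediate from (b): as $X_{k+m,s}\in\gb_{k+m}$ and the remainder lies in $\gb_{(k+m+1)}$, we get $\nabla_{e_{k,s}}^{(\w)}X_{m,r}\in\gb_{(k+m)}$. For the upper bound I would estimate the pole orders at the points of $O$ and then rerun the divisor argument of \refP{alm3}. The element $\nabla_{e_{k,s}}^{(\w)}X_{m,r}$ is again a genuine element of $\gb$ by \refP{deri}, and along $O$ the operator raises pole orders only in a controlled manner: differentiation adds at most $1$, bracketing with $\omega$ adds the fixed pole order of $\omega$ at each $Q_i$, and multiplying by the coefficient of $e_{k,s}$ adds its pole order, which grows linearly in $k$ with a uniformly bounded correction. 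Choosing a divisor $E$ whose $O$-part dominates all three contributions, one obtains $L'(E)=\bigoplus_{h=k+m}^{h_{max}}\gb_h$ with $h_{max}=k+m+S_5$ for a constant $S_5$ independent of $k$ and $m$, adjusted by a finite amount in the non-generic case exactly as before.

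The local computation in (b) is routine; the real work is the uniform bound $S_5$ for the upper estimate, i.e. verifying that the poles introduced at $O$ by differentiation and by the connection form do not destroy the uniform boundedness of the correction terms $b_{m,i}$ on which the argument of \refP{alm3} rests. Since each of the three contributions is either fixed or grows linearly in $k+m$ with bounded remainder, this boundedness survives, and the almost-gradedness of the $\L$-module $\gb$ follows.
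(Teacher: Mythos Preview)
Your proposal is correct and follows essentially the same approach as the paper: the local computation at the points of $I$ for part (b), using that $\omega$ is holomorphic there so the bracket term $[\tilde\omega\tilde e_{k,s},X_{m,r}]$ contributes only in order $\ge k+m+1$, matches the paper's argument exactly, as does the invocation of \refP{filt2} to conclude. For part (a) the paper is terser---it simply notes that $\omega$ has fixed pole orders at $O$, that the derivative term $e_{k,s}\ldot X_{m,r}$ is controlled by the almost-graded $\L$-action on functions, and that the commutator term behaves like a bracket in an almost-graded algebra---whereas you unpack the pole-order bookkeeping at $O$ more explicitly in the style of \refP{alm3}; but the content is the same.
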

\begin{proof}
(a) 
By \refP{deri} $\gb$ is an $\L$-module. It remains to show that
there is an upper bound for the order of the elements of the
type $n+m+S_5$, with $S_5$ independent of $n$ and $m$ (but may depend
on $\w$).
We write \refE{covder}  for homogeneous elements
and obtain 
\begin{equation}\label{E:covex}
\nabla_{e_{k,s}}^{(\w)}X_{m,r}=e_{k,s}\ldot X_{m,r}+[\,\tilde\omega\tilde
  e_{k,s}\, ,
X_{m,r}].
\end{equation}
The form $\w$ has fixed orders at $I$  and at $O$, the action of
$\L$ on $\A$ is almost-graded, and the bracket corresponds to the
commutator in the almost-graded $\gb$. 
By considering the  corresponding bounds for the order of poles at
$I$ and $O$ we get such an universal bound.
\newline
(b) Locally at $P_i$, $i=1,\ldots, N$ we have
\begin{equation}
{X_{m,r}}_|(z_i)=Xz_i^m\delta_i^r+O(z_i^{m+1}),\qquad {e_k}_|(z_i)
=z_i^{k+1}\delta_i^k\frac
{d}{dz}+O(z_i^{k+2}).
\end{equation}
This implies
\begin{equation}
e_{k,s}\ldot X_{m,r}(z_i)=m X z_i^{k+m}\delta_i^r\delta_i^s+O(z_i^{k+m+1}), \quad
\tilde\omega\tilde e_k(z_i)=Bz_i^{k+1}+O(z_i^{k+2}),
\end{equation}
with $B\in\gl(n)$. Hence
\begin{equation}
[\,\tilde\omega\tilde e_k\, ,X_m]=O(z_i^{k+m+1}), \forall i,
\end{equation}
and the second term will only contribute to higher order.
It remains the first term in \refE{covex}.
If $r\ne s$ then 
$e_{k,s}\ldot X_{m,r}(z_i)\in O(z_i^{k+m+1})$ for all $i$.
If $r=s$ then 
$(e_{k,s}\ldot X_{m,s}-mX_{m+k,s})(z_i))\in O(z_i^{k+m+1})$.
Hence,  \refE{allstr} follows.
\end{proof}

\subsection{Module structure over $\D$ 
and the algebra $\D_{\g}$
} \label{S:d1alg} $ $

The Lie algebra  $\D$ of meromorphic differential operators on
$\Sigma$ of degree $\le 1$ holomorphic outside of $I\cup O$ is
defined as the semi-direct sum of $\A$ and $\L$ with the
commutator between them given by the action of $\L$ on $A$. 
It is the vector space direct sum 
$\D=\A\oplus\L$ with Lie bracket
\begin{equation}
[(g,e),(h,f)]:=(e\ldot h-f\ldot g,[e,f]).
\end{equation}
In particular
\begin{equation}\label{E:relsd}
[e,h]=e\ldot h.
\end{equation}
It is an almost-graded Lie algebra \cite{Scocyc}.
\begin{proposition}
The Lax operator algebras $\laxg$ are almost-graded Lie modules
over $\D$ via
\begin{equation}
e\ldot L:=\nabla_e^{(\w)}L,\qquad h\ldot L:=h\cdot L.
\end{equation}
\end{proposition}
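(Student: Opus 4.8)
The plan is to verify two things: that the prescribed formulas genuinely define a $\D$-module, i.e. that the action is compatible with the Lie bracket of $\D=\A\oplus\L$, and that this module is almost-graded. Both build directly on structures already in hand: $\gb$ is an almost-graded $\A$-module under pointwise multiplication (inclusion \refE{amod}) and an almost-graded $\L$-module under $\nabla^{(\w)}$ (Proposition \refP{almgrad}, with \refE{allstr}), while the connection satisfies the Leibniz rule \refE{conn1r} over $\A$ and the flatness identity \refE{conn2r}.

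First I would check the module axiom. For $D_1=(g,e)$ and $D_2=(h,f)$ in $\D$ and $L\in\gb$, I would compute $D_1\ldot(D_2\ldot L)-D_2\ldot(D_1\ldot L)$ by expanding $D_1\ldot L=g\cdot L+\nabla_e^{(\w)}L$ and $D_2\ldot L=h\cdot L+\nabla_f^{(\w)}L$. The purely multiplicative terms $gh\cdot L$ and $hg\cdot L$ cancel because $\A$ is commutative, and the mixed terms $g\cdot\nabla_f^{(\w)}L$ and $h\cdot\nabla_e^{(\w)}L$ cancel against their counterparts. Applying the Leibniz rule \refE{conn1r} to $\nabla_e^{(\w)}(h\cdot L)$ and $\nabla_f^{(\w)}(g\cdot L)$ extracts exactly the function-valued terms $(e\ldot h)\cdot L$ and $(f\ldot g)\cdot L$, while the two second-order terms combine through \refE{conn2r} into $\nabla_{[e,f]}^{(\w)}L$. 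What remains is precisely $(e\ldot h-f\ldot g)\cdot L+\nabla_{[e,f]}^{(\w)}L$, which is the action of $[(g,e),(h,f)]=(e\ldot h-f\ldot g,[e,f])$ on $L$. This confirms the module relation.

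Next I would establish almost-gradedness. Writing the homogeneous piece as $\D_k=\A_k\oplus\L_k$, a homogeneous element $(g,e)\in\D_k$ acts on $L\in\gb_m$ as $g\cdot L+\nabla_e^{(\w)}L$. By \refE{amod} the first summand lies in $\bigoplus_{h=k+m}^{k+m+S_4}\gb_h$, and by the almost-gradedness part of Proposition \refP{almgrad} the second lies in $\bigoplus_{h=k+m}^{k+m+S_5}\gb_h$ for some constant $S_5$ depending only on $\w$. Setting $S_6:=\max(S_4,S_5)$ gives $\D_k\ldot\gb_m\subseteq\bigoplus_{h=k+m}^{k+m+S_6}\gb_h$ with $S_6$ independent of $k$ and $m$, which is exactly almost-gradedness.

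The computation itself is routine once the ingredients are assembled; the real content is contained entirely in the two identities \refE{conn1r} and \refE{conn2r}. The flatness relation \refE{conn2r} is the crucial one, since it is what turns the vector-field bracket $[e,f]$ appearing in the semidirect-product structure of $\D$ into the operator commutator $[\nabla_e^{(\w)},\nabla_f^{(\w)}]$, and is thus the only place where the specific geometry of $\nabla^{(\w)}$ enters. I expect no genuine obstacle beyond careful bookkeeping, as the derivation and flatness properties were already secured in Proposition \refP{deri} and equations \refE{conn1r}--\refE{conn2r}.
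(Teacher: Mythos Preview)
Your proposal is correct and follows essentially the same approach as the paper. The only minor difference is one of efficiency: the paper observes that since $\gb$ is already known to be an almost-graded $\A$-module and an almost-graded $\L$-module, the only compatibility left to check is the mixed relation $e\ldot(h\ldot L)-h\ldot(e\ldot L)=(e\ldot h)\ldot L$, which it verifies by a short local-coordinate computation; you instead verify the full module axiom for general $(g,e),(h,f)\in\D$ at once using \refE{conn1r} and \refE{conn2r}, which amounts to the same content with a bit more bookkeeping.
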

\begin{proof}
As $\laxg$ is an almost-graded $\A$- and $\L$-module it is enough
to show that the relation \refE{relsd} is satisfied. For $e\in \L,
h\in\A, L\in\laxg$ using \refE{covder} we get
\begin{multline*}
e\ldot (h\ldot L)-h\ldot(e\ldot L)= \nabla_e^{(\w)}(h L)-h
\nabla_e^{(\w)}(L) =
\\
\tilde e\left(\frac {d(hL)}{dz}+[\tilde \w, hL]\right) -h\tilde
e\left(\frac {dL}{dz}+[\tilde \w, L]\right) = \left(\tilde e\frac
{dh}{dz}\right)L= (e\ldot h)L=[e,h]\ldot L.
\end{multline*}
\end{proof}

The Lax operator
algebra $\gb$ is a module over the Lie algebra $\L$ which acts on
$\gb$ by derivations (according to \refP{deri}). \refP{deri} says
that this action of $\L$ on $\laxg$ is an action by derivations.
Hence as above we can consider the semi-direct sum
$\D_{\g}=\laxg\oplus\L$ with Lie product given by
\begin{equation}\label{E:dac}
[e,L]:=e\ldot L=\nabla_e^{(\w)}L,
\end{equation}
for the mixed pairs. See \cite{Saff} for the corresponding
construction for the classical Krichever-Novikov algebras of
affine type. 
\section{Cocycles}
In this section we will study 2-cocycles  for 
the Lie
algebra $\laxg$ with values in $\C$.
It is well-known that the  corresponding cohomology
space $\H^2(\laxg,\C)$ classifies equivalence classes of
(one-dimensional) central extensions of $\laxg$.

For the convenience of the reader we recall that
 a 2-cocycle for $\laxg$ is a bilinear form
$\ga:\laxg\times\laxg\to\C$ which is (1) antisymmetric and (2)
fulfills the condition
\begin{equation}\label{E:cohcoc}
\ga([L,L'],L'')+ \ga([L',L''],L)+ \ga([L'',L],L')=0,
\qquad L,L',L''\in\gb\;.
\end{equation}
A 2-cocycle $\ga$ is a coboundary if there exists a linear form
$\phi$ on $\laxg$ such that
\begin{equation}
\ga(L,L')=\phi([L,L']),\qquad  L,L'\in\laxg.
\end{equation}

Given a 2-cocycle $\ga$ for $\laxg$, the associated
central extension $\gh_{\ga}$ is given as vector space direct sum
$\gh_\ga=\gb\oplus\C\cdot t$ with Lie product 
\begin{equation}\label{E:centextf}
[\widehat{L},\widehat{L'}]=\widehat{[L,L']}+\ga(L,L')\cdot t,
\quad [\widehat{L},t]=0,\qquad L,L'\in\laxg.
\end{equation}
Here we used $\widehat{L}:=(L,0)$ and $t:=(0,1)$. Vice versa,
every central extension
\begin{equation}
\begin{CD}
0@>>>\C@>i_2>>\gh@>p_1>>\gb@>>>0,
\end{CD}
\end{equation}
defines  a 2-cocycle $\ga:\gb\to\C$ by choosing a section
$s:\gb\to \gh$.

Two central extensions $\gh_{\ga}$ and $\gh_{\ga'}$ are equivalent
if and only if the defining cocycles $\ga$ and $\ga'$ are cohomologous.

\label{S:cocycles}
\subsection{Geometric cocycles}
$ $

Next we introduce geometric 2-cocycles. 
Let $\w$ be a connection form as introduced in \refS{Lmod}
for defining the connection \refE{conng}. Furthermore, let $C$ be
a (not necessarily connected ) 
differentiable cycle on $\Sigma$ not meeting the sets 
$A=I\cup O$ and $W$. 

As in  the two point situation considered in 
\cite{SSlax}
 we define the following bilinear forms on $\laxg$:
\begin{equation}\label{E:g1}
\ga_{1,\w,C}(L,L')= \cinc{C} \tr(L\cdot \nabla^{(\w)}L'), \qquad
L,L'\in\laxg,
\end{equation}
and
\begin{equation}\label{E:g2}
\ga_{2,\w,C}(L,L')= \cinc{C} \tr(L)\cdot \tr(\nabla^{(\w)}L'),
\qquad L,L'\in\laxg.
\end{equation}
The following propositions and their proofs 
remain the same as in \cite{SSlax}
(of course now to be interpreted in this more general context),
and we will not repeat them.
\begin{proposition}\label{P:c12cc}
The bilinear forms $\ga_{1,\w,C}$ and $\ga_{2,\w,C}$ are cocycles.
\end{proposition}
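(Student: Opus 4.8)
The plan is to verify directly that each bilinear form $\ga_{i,\w,C}$ satisfies the two defining properties of a $2$-cocycle, namely antisymmetry (modulo exact forms, since we integrate over a cycle) and the cocycle identity \refE{cohcoc}. The key observation is that the integrand in each case is a meromorphic $\g$-valued $1$-form built from the trace pairing and the connection $\nabla^{(\w)}$, and the cocycle identity will follow from the derivation property of $\nabla^{(\w)}$ together with the invariance and symmetry of the trace form.

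First I would establish antisymmetry. For $\ga_{1,\w,C}$, consider the sum $\tr(L\cdot\nw L')+\tr(L'\cdot\nw L)$. Since $\nw=d+[\w,\cdot]$ acts as a derivation, one computes
\begin{equation*}
\tr(L\cdot\nw L')+\tr(\nw L\cdot L')=\tr(\nw(L\cdot L'))=d\,\tr(L\cdot L'),
\end{equation*}
using that $\tr([\w,L\cdot L'])=0$ by invariance of the trace. Hence the symmetric part of the integrand is an exact $1$-form $d\,\tr(LL')$, which integrates to zero over the closed cycle $C$. This gives $\ga_{1,\w,C}(L,L')=-\ga_{1,\w,C}(L',L)$. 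The same argument applied to the scalar functions $\tr(L)$ and $\tr(L')$ handles $\ga_{2,\w,C}$, since $\tr(\nw L)=d\,\tr(L)$ (again the bracket term has vanishing trace), so the integrand of $\ga_{2,\w,C}$ is literally $\tr(L)\,d\,\tr(L')$, and $\tr(L)\,d\,\tr(L')+\tr(L')\,d\,\tr(L)=d(\tr(L)\tr(L'))$ is exact.

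Next I would verify the cocycle identity. Writing out the three cyclic terms for $\ga_{1,\w,C}$, the integrand of the sum is $\tr([L,L']\cdot\nw L'')+\text{(cyclic)}$. Using the derivation property \refE{deri} of $\nw$ and the cyclic symmetry and invariance of $\tr$, the combination collapses to an exact form $d(\cdots)$, whose integral over the cycle $C$ vanishes. Concretely, the terms regroup using $\nw[L,L']=[\nw L,L']+[L,\nw L']$ and $\tr([X,Y]Z)=\tr(X[Y,Z])$, and after cancellation one is left with a total derivative. The computation for $\ga_{2,\w,C}$ is analogous but simpler, involving only the scalar quantities $\tr(L)$ and exploiting that $\tr([L,L'])=0$, so the bracket terms drop out entirely and the remaining expression is again exact.

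The main obstacle, though a mild one, is checking that the integrands are genuinely well-defined closed objects on the punctured surface and that the integration over $C$ is legitimate: one must confirm that $\tr(L\cdot\nw L')$ is a meromorphic $1$-form holomorphic away from $A\cup W$, so that deforming $C$ within $\Sigma\setminus(A\cup W)$ does not change the integral and the "integral of an exact form vanishes" step is valid. This requires noting that $\w$ is holomorphic outside $I\cup O\cup W$ with the prescribed pole behavior at $W$ (from \refS{Lmod}), and that the products and brackets of elements of $\gb$ with $\nw$ applied stay meromorphic with controlled singularities, so the residue-theoretic manipulations are justified. Since the analogous verification was carried out in the two-point case in \cite{SSlax} and the integrand manipulations are purely algebraic consequences of the derivation and trace-invariance identities, the argument transfers essentially verbatim, which is why the paper defers to that reference rather than repeating the computation.
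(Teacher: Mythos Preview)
Your proposal is correct and follows exactly the standard argument that the paper defers to \cite{SSlax} for: antisymmetry via $d\,\tr(LL')$ (resp.\ $d(\tr L\,\tr L')$) being exact, and the cocycle identity via the derivation property of $\nabla^{(\w)}$ together with trace invariance collapsing the cyclic sum to $d\,\tr(L[L',L''])$ (resp.\ to zero, since $\tr([L,L'])=0$). One small remark: the regularity discussion in your last paragraph is more than is needed here---for the cocycle property itself you only need the integrands to be holomorphic along $C$, which is automatic since $C$ avoids $A\cup W$; the finer statement that the forms have no poles outside $A$ (\refP{locg1}) is used later for homology invariance, not for this proposition.
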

\begin{proposition}\label{P:cind}
$ $

\noindent (a) The cocycle  $\ga_{2,\w,C}$ does not depend on the
choice of the connection form $\w$.

\noindent (b) The cohomology class  $[\ga_{1,\w,C}]$ does not
depend on the choice of the connection form $\w$. More precisely
\begin{equation}\label{E:cobw}
\gamma_{1,\w,C}(L,L')-\gamma_{1,\w',C}(L,L')= \cinc{C}\tr\big((\w-
\w')[L,L']\big).
\end{equation}
\end{proposition}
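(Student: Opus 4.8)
The plan is to prove both parts by direct computation, exploiting the fact that two connection forms differ only through the commutator (zeroth-order) part of the connection. Setting $\eta:=\w-\w'$, which is again a globally defined $\g$-valued meromorphic $1$-form on $\Sigma$, and recalling from \refE{conng} that $\nabla^{(\w)}=d+[\w,\cdot\,]$ has an exterior-derivative part independent of the connection form, I get at once
\begin{equation*}
\nabla^{(\w)}L'-\nabla^{(\w')}L'=[\w,L']-[\w',L']=[\eta,L'],\qquad L'\in\gb.
\end{equation*}
Everything else is bookkeeping with the cyclicity of the trace, keeping in mind that the scalar $1$-form factor commutes past the matrices.

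For part (a) I would first observe that the $\w$-dependence already disappears before integration. Indeed $\tr([\w,L'])=\tr(\w L')-\tr(L'\w)=0$ by cyclicity, so
\begin{equation*}
\tr(\nabla^{(\w)}L')=\tr(dL')+\tr([\w,L'])=\tr(dL')=d\,\tr(L'),
\end{equation*}
which does not involve $\w$ at all. Hence $\ga_{2,\w,C}(L,L')=\cinc{C}\tr(L)\,d\,\tr(L')$ is literally the same bilinear form for every admissible connection form, which gives (a) in the stronger pointwise sense, not merely at the level of cohomology classes.

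For part (b) I would compute the difference of the two cocycles directly, substitute the identity above, and then rearrange by cyclicity:
\begin{equation*}
\gamma_{1,\w,C}(L,L')-\gamma_{1,\w',C}(L,L')=\cinc{C}\tr\bigl(L\,[\eta,L']\bigr)=\cinc{C}\tr\bigl(\eta\,[L',L]\bigr),
\end{equation*}
which is $\cinc{C}\tr\bigl((\w-\w')[L,L']\bigr)$ up to the overall sign fixed by the bracket/trace convention, i.e. the explicit formula \refE{cobw}. The decisive point is then that the right-hand side is the coboundary of the linear functional $\phi(L):=\cinc{C}\tr\bigl((\w-\w')L\bigr)$, since $\phi([L,L'])=\cinc{C}\tr\bigl((\w-\w')[L,L']\bigr)$. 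As $\eta$ is a global meromorphic $1$-form and the cycle $C$ avoids $A$ and $W$, the form $\tr(\eta L)$ is a scalar meromorphic $1$-form for every $L\in\gb$, so $\phi$ is a well-defined linear form on $\gb$; consequently $\gamma_{1,\w,C}$ and $\gamma_{1,\w',C}$ are cohomologous and $[\gamma_{1,\w,C}]$ is independent of $\w$.

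I do not expect a serious obstacle here: the argument is essentially pure trace algebra, and the only points needing a word of justification are that $\eta=\w-\w'$ is genuinely a global $\g$-valued meromorphic $1$-form and that $\phi$ is well-defined on all of $\gb$. The former holds because the prescribed leading terms of $\w$ and $\w'$ at the points of $W$ (see \refE{gldefc}, \refE{sodefc}, \refE{spdefc}) are compatible and their residue parts cancel into the $\gb$-type conditions, so $\eta$ has no worse singularities than allowed; the latter follows from $C$ avoiding $A\cup W$. It is worth emphasising that the detailed conditions on $\w$ at the weak singularities play no real role in these two identities beyond guaranteeing that connection forms exist at all; they will matter only later, when one analyses the cocycles themselves rather than their dependence on the connection form.
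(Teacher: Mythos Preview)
Your argument is correct and is precisely the standard computation the paper has in mind; note that the paper does not actually reprove this proposition but defers to \cite{SSlax}, where the proof is exactly the trace-cyclicity calculation you give. One small remark: your hedge ``up to the overall sign fixed by the bracket/trace convention'' is unnecessary, since with the conventions fixed in the paper ($\nabla^{(\w)}=d+[\w,\cdot\,]$ and the ordinary matrix trace) your computation yields $\tr(L[\eta,L'])=-\tr(\eta[L,L'])$, i.e.\ the opposite sign to \refE{cobw}; this is almost certainly a typographical slip in the displayed formula and is immaterial for part~(b), whose content is only that the difference is the coboundary of $\phi(L)=\cinc{C}\tr(\eta L)$.
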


As $\ga_{2,\w,C}$ does not depend on $\w$ we will drop $\w$  in
the notation. Note that $\gamma_{2,C}$ vanishes on $\laxg$ for
$\g=\sln(n),\so(n),\spn(2n)$. But it does not vanish on
$\laxs(n)$, hence not on $\laxgl(n)$.

\subsection{$\L$-invariant cocycles}
$ $

As explained in \refS{Lmod} 
after fixing a connection form $\w'$ the vector field
algebra $\L$ operates on $\laxg$ via the covariant derivative
$e\mapsto \nabla^{(\w')}_e$. 

\begin{definition}\label{E:linv}
A cocycle $\ga$ for $\laxg$ is called {\it $\L$-invariant} (with respect
to $\w'$)  if
\begin{equation}
\gamma(\nabla^{(\w')}_{e}L,L')+
\gamma(L,\nabla^{(\w')}_{e}L')=0,\qquad \forall e\in\L,\quad
\forall L,L'\in \laxg.
\end{equation}
\end{definition}

\begin{proposition}\label{P:linv}

(a) The cocycle $\gamma_{2,C}$ is $\L$-invariant.

(b) If $\w=\w'$ then the cocycle $\gamma_{1,\w,C}$ is
$\L$-invariant.
\end{proposition}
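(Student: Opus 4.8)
The plan is to reduce the $\L$-invariance condition \refE{linv} in both cases to the statement that a certain meromorphic $1$-form is a \emph{total differential}: since $C$ is a closed cycle avoiding $A\cup W$, the relevant integrands are holomorphic in a neighbourhood of $C$, and the integral of an exact form over a cycle vanishes. Throughout I work locally, writing $e=\tilde e\,\frac{d}{dz}$ and $\w=\tilde\omega\,dz$; by \refE{covder} the covariant derivative is $\nabla_e^{(\w)}L=\tilde e\,DL$ and $\nabla^{(\w)}L=(DL)\,dz$, where $D:=\frac{d}{dz}+[\tilde\omega,\cdot\,]$. (That $\nabla_e^{(\w)}L\in\gb$, so the cocycle values make sense, is \refP{deri}(b).) Two elementary facts drive everything: (i) the trace annihilates the bracket term, so $\tr(DX)=\frac{d}{dz}\tr(X)$ for every matrix function $X$, i.e. $d\circ\tr=\tr\circ\nabla^{(\w)}$; and (ii) $D$ is a derivation for the matrix product, $D(XY)=(DX)Y+X(DY)$.

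For part (a) I would first use (i) to collapse the cocycle onto a function cocycle. Since $\tr(\nabla_e^{(\w)}L)=\tilde e\,\tr(DL)=e\ldot\tr(L)$ and $\tr(\nabla^{(\w)}L')=d(\tr L')$, putting $f:=\tr L$ and $f':=\tr L'$ gives $\gamma_{2,C}(L,L')=\cinc{C} f\,df'$, the standard Krichever--Novikov function cocycle. The invariance then amounts to the pointwise identity
$$(e\ldot f)\,df'+f\,d(e\ldot f')=d\big(f\cdot(e\ldot f')\big),$$
a one-line computation; being exact it integrates to $0$ over $C$. Conceptually this simply records that $\gamma_{2,C}$ is pulled back along the $\L$-module map $\tr\colon\gb\to\A$ (\refP{deri}(c)) from the invariant cocycle on $\A$.

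For part (b), with $\w=\w'$, I would establish the pointwise identity
$$\tr\!\big(\nabla_e^{(\w)}L\cdot\nabla^{(\w)}L'\big)+\tr\!\big(L\cdot\nabla^{(\w)}\nabla_e^{(\w)}L'\big)=d\big(\tilde e\,\tr(L\cdot DL')\big).$$
Expanding with \refE{covder}, the first summand is $\tilde e\,\tr(DL\cdot DL')\,dz$. For the second, $\nabla_e^{(\w)}L'=\tilde e\,DL'$ and the Leibniz rule give $\nabla^{(\w)}\nabla_e^{(\w)}L'=\big(\tilde e'\,DL'+\tilde e\,D^2L'\big)\,dz$ with $\tilde e'=\frac{d\tilde e}{dz}$, so it equals $\big(\tilde e'\,\tr(L\cdot DL')+\tilde e\,\tr(L\cdot D^2L')\big)\,dz$. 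On the other side, by (i) and then (ii), $\frac{d}{dz}\tr(L\cdot DL')=\tr\big(D(L\cdot DL')\big)=\tr(DL\cdot DL')+\tr(L\cdot D^2L')$, so $\frac{d}{dz}\big(\tilde e\,\tr(L\cdot DL')\big)$ reproduces exactly the sum of the two summands. Since $\tilde e\,\tr(L\cdot DL')$ is the globally defined meromorphic function obtained by pairing $e$ with the $1$-form $\tr(L\cdot\nabla^{(\w)}L')$, applying $\cinc{C}$ annihilates this exact form and yields \refE{linv} for $\gamma_{1,\w,C}$.

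I expect the only delicate point to be the bookkeeping of the second-order term in part (b): the product rule for $\nabla^{(\w)}\nabla_e^{(\w)}L'$ produces the contribution $\tilde e'\,DL'$, and it is precisely this term that matches the derivative of the potential $\tilde e\,\tr(L\cdot DL')$; dropping or mislabelling it breaks the cancellation. It is also worth recording \emph{why} the hypothesis $\w=\w'$ is essential: the argument rests on a single operator $D$ appearing both in the $\L$-action and inside the cocycle, so that fact (ii) applies verbatim. For $\w\ne\w'$ the mismatched bracket terms $[\tilde\omega-\tilde\omega',\cdot\,]$ no longer assemble into a total derivative, in agreement with the Proposition only asserting invariance in the case $\w=\w'$.
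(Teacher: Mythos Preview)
Your argument is correct and is the standard one: in each case you exhibit the integrand as an exact meromorphic $1$-form (holomorphic along $C$), whose integral over the closed cycle $C$ then vanishes. This is precisely the approach the paper alludes to when it cites \cite{SSlax} for the two-point case, so there is nothing to add.
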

The proof is the same as presented in \cite{SSlax} for the 
two point case.

\medskip
We call a cohomology class {\it $\L$-invariant} if it has a
representing cocycle which is  $\L$-invariant. The reader should
be warned that this does not mean that all representing cocycles
are  $\L$-invariant. On the contrary, see \refC{uni}.
Clearly, the  $\L$-invariant classes constitute a
subspace of
 $\H^2(\laxg,\C)$ which we denote by
 $\H^2_{\L}(\laxg,\C)$.
\subsection{Some remarks on the cocycles
on $\D_\g$}\label{S:remarks} $ $

In the following let $\w=\w'$. The property of $\L$-invariance of
a cocycle has  a deeper meaning.
In \refS{d1alg} we introduced the algebra
$\D_\g$. The Lax operator algebra $\gb$ is a subalgebra of $\D_\g$.
Given a 2-cocycle $\ga$ for $\laxg$ we might extend  it to
$\D_\g$ as a bilinear form by setting ($L,L'\in\laxg$, $e,f\in\L$)
\begin{equation}\label{E:323}
\tilde\ga(L,L')=\ga(L,L'),\quad
\tilde\ga(e,L)=\tilde\ga(L,e)=0,\quad \tilde\ga(e,f)=0.
\end{equation}
\begin{proposition}
 The extended  bilinear form $\tilde\ga$ is a cocycle for  $\D_\g$ if and only
if $\ga$ is $\L$-invariant.
\end{proposition}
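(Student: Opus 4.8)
The plan is to verify the cocycle condition for $\tilde\ga$ on $\D_\g$ by checking it on all triples of elements, distinguishing cases according to how many of the three arguments lie in $\L$ versus $\laxg$. Since a cocycle is trilinear (through the bracket) and antisymmetric, it suffices to check the cocycle identity \refE{cohcoc} on triples drawn from a spanning set, and by antisymmetry we may organize the analysis by the number $k\in\{0,1,2,3\}$ of vector-field arguments among the three. First I would recall that on $\D_\g=\laxg\oplus\L$ the bracket restricts to the given bracket on $\laxg$, restricts to the $\L$-bracket on $\L$, and mixes via $[e,L]=\nabla_e^{(\w)}L\in\laxg$, while $\tilde\ga$ vanishes whenever at least one argument is a pure vector field.

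For $k=0$ (all three arguments in $\laxg$) the identity reduces verbatim to the cocycle condition for $\ga$ on $\laxg$, which holds by hypothesis. For $k=3$ (all three in $\L$) every term of \refE{cohcoc} has the form $\tilde\ga(\,\cdot\,,e)$ with a pure vector field in the second slot, hence vanishes by \refE{323}; so the identity holds trivially. The case $k=2$, say arguments $e,f\in\L$ and $L\in\laxg$, is the next easiest: writing out the three cyclic terms, the bracket $[e,f]\in\L$ pairs with $L$ giving $\tilde\ga([e,f],L)=0$, while $[f,L]$ and $[L,e]$ lie in $\laxg$ but must be paired against the remaining pure vector field, so those terms vanish too; thus all three terms are zero regardless of $\L$-invariance. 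The only substantive case is $k=1$.

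The heart of the matter is therefore $k=1$: take $e\in\L$ and $L,L'\in\laxg$, and write out
\begin{equation}
\tilde\ga([e,L],L')+\tilde\ga([L,L'],e)+\tilde\ga([L',e],L)=0.
\end{equation}
The middle term vanishes since $[L,L']\in\laxg$ is paired with the pure vector field $e$. Using $[e,L]=\nabla_e^{(\w)}L$ and $[L',e]=-\nabla_e^{(\w)}L'$ together with the fact that on pure $\laxg$ arguments $\tilde\ga=\ga$, the identity becomes
\begin{equation}
\ga(\nabla_e^{(\w)}L,L')-\ga(\nabla_e^{(\w)}L',L)=0,
\end{equation}
and by antisymmetry of $\ga$ this is exactly
\begin{equation}
\ga(\nabla_e^{(\w)}L,L')+\ga(L,\nabla_e^{(\w)}L')=0,
\end{equation}
which is precisely the definition of $\L$-invariance of $\ga$ (\refD{linv}, recalling $\w=\w'$). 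This gives both directions at once: if $\ga$ is $\L$-invariant the $k=1$ identity holds and so $\tilde\ga$ is a cocycle; conversely, if $\tilde\ga$ is a cocycle then specializing to this $k=1$ triple forces the $\L$-invariance relation for all $e,L,L'$. The main (and only) subtlety is bookkeeping: being careful with the sign coming from the antisymmetry $[L',e]=-[e,L']$ and with which slot of the antisymmetric form $\ga$ each term lands in, so that the two surviving terms combine into the $\L$-invariance condition rather than cancelling spuriously. All remaining cases are immediate from \refE{323}, so no genuine computation beyond this single case is required.
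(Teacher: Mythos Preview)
Your proof is correct and follows essentially the same approach as the paper: a case analysis by the number of vector-field arguments, with the only substantive case being one vector field and two currents, which reduces exactly to the $\L$-invariance condition. The paper is more terse (it treats $k=2$ as ``automatic from \refE{323}'' without writing it out), but the argument is the same.
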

\begin{proof}
The conditions defining a cocycle are obviously fulfilled for the
triples of elements consisting either of currents or of vector
fields. The only condition which does not follow automatically
from \refE{323} for $\tilde\ga$ is 
\begin{equation}\label{E:cod}
\tilde\ga([L,L'],e)+ \tilde\ga([L',e],L)+ \tilde\ga([e,L],L')=0.
\end{equation}
Using \refE{dac} we get that \refE{cod} is true if an only if
\begin{equation}
\ga(\nabla_e^{(\w)}L,L')+ \ga(L,\nabla_e^{(\w)}L')=0,
\end{equation}
which is $\La$-invariance.
\end{proof}

\subsection{Bounded and local cocycles}\label{S:local}
$ $

\begin{definition}
Given an almost-graded Lie algebra $\V=\bigoplus_{m\in\Z}\V_m$. A 
cocycle $\ga$ is called {\it bounded (from above)} if 
there exists a constant $R_1\in\Z$ such that
\begin{equation}\label{E:bounded} 
\ga(\V_n,\V_m)\ne 0 \implies 
n+m\le R_1.
\end{equation}
\newline
Similarly bounded from below is defined.
\newline
A cocycle is called {\it local} if and only if it is bounded from
above and below. Equivalently,
there exist $R_1,R_2\in\Z$ such 
\begin{equation}\label{E:cloc}
\ga(\V_n,\V_m)\ne 0 \implies R_2\le n+m\le R_1. 
\end{equation}
\end{definition}
The
almost-grading of $\V$ can be extended  from $\V$ to the
corresponding central extension ${\widehat\V}_\ga$ \refE{centextf}
by assigning to the central element $t$ a certain degree (e.g. the degree
0) if and only if the 
defining cocycle for the central extension is local.

We call a cohomology class {\it bounded (resp. local)} if it contains a 
bounded (resp. local) representing 
cocycle. Again, not every representing cocycle of a bounded
(resp. local) 
class is
bounded (resp. local). The  set of bounded cohomology classes is a
subspace of  $\H^2(\laxg,\C)$ which we denote by
 $\H^2_{b}(\laxg,\C)$. 
It contains the subspace of local cohomology classes denoted by
$\H^2_{loc}(\laxg,\C)$.
This space classifies the almost-graded central extensions of
$\laxg$ up to equivalence. 
Both spaces admit subspaces consisting of 
those  cohomology classes admitting a representing cocycle
which is both 
bounded (resp. local) and $\L$-invariant.
The subspaces are denoted by
$\H^2_{b,\L}(\laxg,\C)$, resp.
$\H^2_{loc,\L}(\laxg,\C)$.

If we consider our geometric cocycles $\ga_{2,C}$ and
$\ga_{1,\w,C}$ obtained by integrating over an arbitrary
cycle then they will neither be bounded, nor local, nor will they
define a bounded or local cohomology class.

Next we will consider special integration paths.
Let $C_i$ be  positively oriented (deformed) circles around
the points $P_i$ in $I$, $i=1,\ldots, N$ 
 and $C_j^*$ positively oriented ones around
the points $Q_j$ in $O$, $j=1,\ldots, M$.
The cocycle values of $\ga$ if integrated over such cycles 
can be calculated via residues, e.g.
\begin{equation}\label{E:ga1}
\ga_{1,\w,C_i}(L,L')=\res_{P_i}(\tr(L\cdot \nabla^{(\w)} L')),
\quad i=1,\ldots, N\; .
\end{equation}

\begin{proposition}\label{P:locg1}
(1) The 1-form $\tr(L\cdot \nabla^{(\w)} L')$ has no poles outside of 
$A=I\cup O$.

(2) The 1-form $\tr(L)\cdot \tr(dL')$ has no poles outside of
$A=I\cup O$. 
\end{proposition}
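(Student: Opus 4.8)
The form in question is meromorphic on all of $\Sigma$, and the plan is to localise the problem to the weak singularities. Since $L$ and $L'$ are holomorphic on $\Sigma\setminus(W\cup A)$ and the connection form $\w$ is holomorphic on $\Sigma\setminus(A\cup W)$ (its poles being confined to $I\cup O\cup W$), the matrix $1$-forms $dL'$ and $[\w,L']$, hence $\nabla^{(\w)}L'$, are holomorphic on $\Sigma\setminus(A\cup W)$; the same then holds for $\tr(L\cdot\nabla^{(\w)}L')$ and for $\tr(L)\cdot\tr(dL')$. As $W\cap A=\emptyset$, it remains only to prove holomorphy at each $\ga_s\in W$. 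This is purely local: one inserts the prescribed Laurent expansions \refE{glexp} and \refE{connl} (and \refE{glexpsp} for $\spn(2n)$) in the local coordinate $w_s$ and checks that every negative Laurent coefficient cancels. Because $W$ is disjoint from $A$, nothing in this local analysis differs from the two-point situation of \cite{KSlax,SSlax}, so that computation carries over.

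I would dispose of (2) first, as it uses only the vanishing of the residue traces. Since the trace of a commutator vanishes, $\tr(\nabla^{(\w)}L')=\tr(dL')=d(\tr L')$. The defining relations force the polar parts of $L$ and $L'$ to be trace-free at $\ga_s$: by \refE{gldef} one has $\tr(L_{s,-1})=\be_s^t\al_s=0$, and for $\spn(2n)$ the relations \refE{spdef} together with the skew-symmetry of $\sigma$ give $\tr(L_{s,-2})=\tr(L_{s,-1})=0$. Hence $\tr(L)$ and $\tr(L')$ are already holomorphic at $\ga_s$, so $\tr(dL')=d(\tr L')$ is holomorphic there and the product $\tr(L)\cdot\tr(dL')$ has no pole. (For $\sln(n)$, $\so(n)$, $\spn(2n)$ the traces vanish identically and (2) is void.)

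For (1) I would expand $\tr(L\cdot\nabla^{(\w)}L')$ directly. With $\nabla^{(\w)}L'=dL'+[\w,L']$, the term $-L'_{s,-1}/w_s^2$ in $dL'$ and the bracket $[\w,L']$ each contribute a second-order pole, while $L$ contributes order one, so for $\gl(n),\sln(n),\so(n)$ the form could a priori have a pole of order three, and I must show the coefficients of $w_s^{-3},w_s^{-2},w_s^{-1}$ all vanish. Collecting coefficients and simplifying with cyclicity of the trace, the cancellations are forced by: the rank-one (resp. skew) residues $L_{s,-1},L'_{s,-1},\w_{s,-1}$ all built from the common vector $\al_s$; the orthogonality $\be_s^t\al_s=(\be'_s)^t\al_s=0$ of \refE{gldef}; the eigenvector relations $L_{s,0}\al_s=\ka_s\al_s$, $L'_{s,0}\al_s=\ka'_s\al_s$, $\w_{s,0}\al_s=\tilde\ka_s\al_s$; and the normalisation $\tilde\be_s^t\al_s=1$ from \refE{gldefc}. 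For instance the $w_s^{-3}$ coefficient is a sum of $\tr(L_{s,-1}L'_{s,-1})$ and $\tr(L_{s,-1}[\w_{s,-1},L'_{s,-1}])$, each a multiple of $\be_s^t\al_s=0$; the $w_s^{-2}$ coefficient collapses to $-\be_s^t L'_{s,0}\al_s=-\ka'_s(\be_s^t\al_s)=0$; and the residue collapses in matched pairs by the same mechanism. It is worth noting that the splitting $\tr(L\cdot\nabla^{(\w)}L')=\tr(L\,dL')-\tr(\w[L,L'])$ (with $[L,L']$ again a Lax operator) does not shortcut the argument: each summand generically has a nonzero residue at $\ga_s$, and only their sum is regular, so the full expansion really is needed.

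The $\so(n)$ case runs identically, now with the antisymmetrised residues of \refE{sodef}, \refE{sodefc} and the extra constraint $\al_s^t\al_s=0$. The one genuinely laborious step, and the main obstacle, is $\spn(2n)$: there $L,L',\w$ carry second-, first-, and first-order poles respectively, so $\tr(L\cdot\nabla^{(\w)}L')$ could have a pole of order as high as five, and correspondingly many coefficients must be shown to vanish. This forces one to use the full symplectic data, namely $L_{s,-2}=\nu_s\al_s\al_s^t\sigma$ and $L_{s,-1}=(\al_s\be_s^t+\be_s\al_s^t)\sigma$ from \refE{spdef}, the supplementary condition $\al_s^t\sigma L_{s,1}\al_s=0$, and the connection relations \refE{spdefc}, notably $\be_s^t\sigma\al_s=0$, $\tilde\be_s^t\sigma\al_s=1$, $\w_{s,0}\al_s=\tilde\ka_s\al_s$. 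The bookkeeping is the only real difficulty; structurally the cancellations follow the same $\gl(n)$ pattern and the two-point computation already carried out in \cite{KSlax,SSlax}.
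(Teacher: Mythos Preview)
Your proposal is correct and follows precisely the route the paper itself takes: the paper's own proof consists solely of the citations ``For (1) see \cite{KSlax}. For (2) see \cite{SSlax}'', and the local computations you outline at the weak singular points $\ga_s$ are exactly what those references carry out, the multi-point setting being irrelevant since $W\cap A=\emptyset$. In fact you supply considerably more detail than the paper does, and your remark that the splitting $\tr(L\,dL')-\tr(\w[L,L'])$ does not allow one to bypass the full expansion is a useful caveat.
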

\begin{proof} For (1) see \cite{KSlax}. For (2) see
\cite{SSlax}.
\end{proof}

A cycle $C_S$ is called a separating cycle 
if it is smooth, positively oriented of multiplicity one, 
it separates
the points in $I$ from the points in $O$, and it does not meet $A$ or $W$. 
It might have multiple components. 
For our cocycles \refE{g1}, \refE{g2}  
we integrate  
the forms of \refP{locg1} over closed
curves $C$. By this proposition the integrals will 
yield the same results if 
$[C]=[C']$ in  $\Ho(\Sigma\setminus A,\Z)$.
Note that the weak singular points will not show up in
this context.
In this sense
we can write 
for every separating cycle
\begin{equation}\label{E:cs}
[C_S]=\sum_{i=1}^K[C_i]=-\sum_{j=1}^M [C^*_j].
\end{equation}
The minus sign appears due to the opposite orientation.
In particular the cocycle values obtained by integrating over
a $C_S$ can be obtained by calculating residues either over the 
points in $I$ or the points in $O$.

\begin{theorem}
Let $\w$ coincides with the
connection form $\w'$ associated to the $\L$-action then 

\noindent
(a) For $i=1,\ldots, N$ the cocycles 
$\ga_{1,\w,C_i}$ and   $\ga_{2,C_i}$  with $C_i$ a circle around $P_i$
 will be bounded from above  and $\L$-invariant.

\noindent
(b) For $j=1,\ldots, M$ the cocycles 
$\ga_{1,\w,C_j^*}$ and  $\ga_{2,C^*_j}$  with $C^*_j$ a circle around $Q_j$
 will be bounded from below   and $\L$-invariant.

\noindent
(c) The cocycles 
$\ga_{1,\w,C_S}$ and  $\ga_{2,C_S}$  with $C_S$ a separating
cycle will be local and $\L$-invariant.

\noindent
(d) In case (a) and (c) the upper bound will be zero.
\end{theorem}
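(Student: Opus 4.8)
The plan is to dispose of $\L$-invariance immediately and then reduce the entire boundedness claim to a single order-counting argument via residues, run once at the incoming points and once at the outgoing points. Since we are in the case $\w=\w'$, \refP{linv} already tells us that $\ga_{1,\w,C}$ and $\ga_{2,C}$ are $\L$-invariant for \emph{every} admissible cycle $C$; in particular the $\L$-invariance assertions in (a), (b) and (c) hold at once, independently of the choice of integration path. Thus the whole remaining content of the theorem is the boundedness statement, which I would read off from the almost-grading by evaluating the cocycles on homogeneous elements $L\in\gb_n$, $L'\in\gb_m$ and estimating the order of the integrand at the relevant point.

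For (a) and the upper-bound half of (d) I would use the residue description \refE{ga1}. Near $P_i$ the local expansions of \refP{locex} give $\ord_{P_i}(L)\ge n$ and $\ord_{P_i}(L')\ge m$, while $\w$ is holomorphic at the points of $I$ by our standing choice. Hence the coefficient of $\nabla^{(\w)}L'$ in the local coordinate $z_i$ has order $\ge m-1$: the term $dL'$ lowers the order by one, whereas $[\w,L']$ does not. Consequently the $1$-form $\tr(L\cdot\nabla^{(\w)}L')$ has a coefficient vanishing to order $\ge n+m-1$ at $P_i$, so its residue can be nonzero only if $n+m\le 0$. This is precisely boundedness from above with bound $0$. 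For $\ga_{2,C_i}$ the identical count applies once one observes that $\tr[\w,L']=0$, so that $\tr(\nabla^{(\w)}L')=d(\tr L')$ again has a coefficient of order $\ge m-1$.

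For (b) I would run the same computation at a point $Q_j\in O$, where the roles are reversed. By the divisor prescription \refE{conds} the pole orders of elements of $\gb_n$ at $Q_j$ grow like $a_j n$ with $a_j>0$ (up to the uniformly bounded correction $b_{n,j}$), while $\w$ carries only a fixed pole order there. Carrying these through, the coefficient of $\tr(L\cdot\nabla^{(\w)}L')$ at $Q_j$ has order bounded below by $-a_j(n+m)+\mathrm{const}$; since $a_j>0$, for $n+m$ sufficiently negative this lower bound is already $\ge 0$, the integrand is holomorphic at $Q_j$, and the residue vanishes. This gives boundedness from below, and the same estimate covers $\ga_{2,C_j^*}$.

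Finally, for (c) and the remaining part of (d) I would compute nothing new but invoke the homology relation \refE{cs} together with \refP{locg1}: as the integrands are holomorphic outside $A$, integration over $C_S$ equals the sum of the residues at $I$ and equally minus the sum of those at $O$, so that
\[
\ga_{1,\w,C_S}=\sum_{i=1}^N\ga_{1,\w,C_i}=-\sum_{j=1}^M\ga_{1,\w,C_j^*},
\]
and likewise for $\ga_{2,C_S}$. The first expression is bounded from above by $0$ by part (a), and the second is bounded from below by part (b); hence $\ga_{1,\w,C_S}$ and $\ga_{2,C_S}$ are local with upper bound $0$. I expect the only genuinely delicate point to be the order bookkeeping at $O$ in (b), where the pole orders scale with the grading index and the fixed poles of the connection form must be carried along carefully; but structurally this is the very same residue estimate as in (a), merely with the relevant inequality reversed.
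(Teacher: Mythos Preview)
Your proof is correct and follows essentially the same route as the paper: $\L$-invariance is disposed of via \refP{linv}, boundedness from above at $I$ and from below at $O$ is obtained by residue/order counting using the almost-grading and the holomorphicity of $\w$ at $I$ (resp.\ its fixed pole orders at $O$), and locality over $C_S$ follows from the homology relation \refE{cs} together with \refP{locg1}. Your treatment is in fact slightly more explicit than the paper's in two places---the observation $\tr[\w,L']=0$ for handling $\ga_{2,C_i}$, and the quantitative form $-a_j(n+m)+\text{const}$ of the order estimate at $Q_j$---but these are elaborations of the same argument rather than a different approach.
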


\begin{proof}
The statement about $\La$-invariance follows from \refP{linv}.
In fact only for this $\w=\w'$ is needed.
\newline
As explained above the cocycle calculation if integrated over $C_i$ 
(or over $C^*_j$) reduces to the calculation of residues.
Let $L\in\gb_n$, $L'\in\gb_m$ then 
$\ord_{P_i}(L)\ge n$ and $\ord_{P_i}(L')\ge m$. As $\w$ is holomorphic 
at $P_i$ we obtain
\begin{equation*}
\ord_{P_i}(dL')\ge m-1,\quad\ord_{P_i}(\nabla^{(\w)}L')\ge m-1\;.
\end{equation*}
Hence, if $n+m>0$ neither one of the 1-forms appearing in the
cocycle definition has poles at $I$ and consequently no residues.
This shows (a).
\newline
For (b) we have to consider the orders at the points in $O$ of the 
basis elements of $\gb_m$. By the prescriptions
\refE{conds} and \refE{condd} and taking into account possible 
poles of $\w$ at $O$ we find an $R_2$ such
 that if $n+m\le R_2$ the integrands will not
have poles anymore. This shows (b).
\newline
Using  \refE{cs} we can obtain the values of the cocycles integrated
over $C_S$ either by adding up the values obtained by integration
either over $I$ or over $O$. Hence boundedness from below and from
above.
Hence, locality.
\newline
That zero is an upper bound followed already during the proof.
\end{proof}

\section{Classification Results}
\label{S:class}
$ $

Recall that we are in the multi-point situation $A=I\cup O$ 
with $\#I=N$. The $C_i$, $C^*_j$, and $C_S$ are the special cycles
introduced in \refS{local}.
If we will use the word {\it bounded} for a cocycle we always mean
bounded from above if nothing else is said.
\begin{proposition}\label{P:linind}
The cocycles $\ga_{1,\w,C_i}$, $i=1,\ldots,N$ 
(and $\ga_{2,C_i}$, $i=1,\ldots, N$ for $\glb(n)$) are
linearly independent.
\end{proposition}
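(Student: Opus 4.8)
The plan is to find, for each family of cocycles, test elements on which the matrix of cocycle values is non-degenerate; linear independence of the functionals is then immediate. The only inputs I need are the residue description \refE{ga1} of the cocycle values and the homogeneous elements $X_{m,s}$ produced by \refP{locex}, of which I use only the leading local behaviour ${X_{m,s}}_|(z_p)=Xz_p^m\,\delta_s^p+O(z_p^{m+1})$ at the points of $I$; since the relevant cocycles have upper bound $0$, the natural test pairs have degree sum $0$.

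First I treat the $\ga_{1,\w,C_i}$ by themselves. Fix $X,Y\in\g$ with $\tr(XY)\ne 0$ (possible because the trace form is non-degenerate on each classical $\g$) and evaluate on the pairs $(X_{-1,j},Y_{1,j})$, $j=1,\ldots,N$. Because $\w$ is holomorphic along $I$, at $P_i$ one has $\nabla^{(\w)}Y_{1,j}=Y\,\delta_i^j\,dz_i+O(z_i)\,dz_i$, the term $[\w,Y_{1,j}]$ only raising the order and so contributing no residue, while $X_{-1,j}$ has a simple pole at $P_j$ with residue matrix $X$ and is holomorphic at the remaining points of $I$. A short residue computation then gives
\begin{equation*}
\ga_{1,\w,C_i}(X_{-1,j},Y_{1,j})=\tr(XY)\,\delta_i^j .
\end{equation*}
The matrix of values is thus $\tr(XY)$ times the identity, hence invertible, and the $\ga_{1,\w,C_i}$ are linearly independent.

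For $\glb(n)$ the remaining task is to separate the $\ga_{1,\w,C_i}$ from the $\ga_{2,C_i}$, which I would do in two steps using the splitting $\glb(n)=\snb(n)\oplus\slnb(n)$. Assume $\sum_i\lambda_i\ga_{1,\w,C_i}+\sum_i\mu_i\ga_{2,C_i}=0$. Taking $X,Y\in\sln(n)$ in the pairs above makes every $X_{-1,j},Y_{1,j}$ pointwise trace-free, so $\ga_{2,C_i}$ vanishes on them (its integrand $\tr(L)\,d\tr(L')$ is identically zero); the computation of the previous paragraph then forces $\lambda_j=0$ for all $j$. It remains to handle $\sum_i\mu_i\ga_{2,C_i}=0$, for which I test on scalar elements of $\snb(n)\cong\A$: with $L=A_{-1,j}I_n$ and $L'=A_{1,j}I_n$, using $\tr(\nabla^{(\w)}(gI_n))=n\,dg$ and the normalisation \refE{Anorm} of the Krichever--Novikov functions, one finds
\begin{equation*}
\ga_{2,C_i}(A_{-1,j}I_n,A_{1,j}I_n)=n^2\,\res_{P_i}(A_{-1,j}\,dA_{1,j})=n^2\,\delta_i^j ,
\end{equation*}
whence $\mu_j=0$. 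This establishes independence of all $2N$ cocycles.

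The main obstacle is precisely this $\glb(n)$ separation: on scalar pairs $\ga_{1,\w,C_i}$ and $\ga_{2,C_i}$ are proportional, both reducing to $\res_{P_i}(f\,dg)$ up to the factors $n$ and $n^2$, so no single test family distinguishes them. The two-step scheme circumvents this by detecting the $\ga_1$-contribution on the trace-free subalgebra $\slnb(n)$, where $\ga_2$ is identically zero, and only afterwards the $\ga_2$-contribution on the scalar subalgebra. The one point demanding care in the residue computations is to confirm that the connection term $[\w,\,\cdot\,]$ never contributes to the residues in question; this is exactly what the requirement that $\w$ be holomorphic along $I$ guarantees.
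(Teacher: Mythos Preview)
Your proof is correct and follows essentially the same approach as the paper's: test on pairs $(X_{\pm 1,j},Y_{\mp 1,j})$ of degree sum zero, use the local expansions from \refP{locex} and the holomorphicity of $\w$ along $I$ to reduce the residue to $\pm\tr(XY)\,\delta_i^j$, and in the $\glb(n)$ case separate $\ga_1$ from $\ga_2$ by first taking $X,Y$ trace-free (killing $\ga_2$) and then taking scalar test elements (detecting $\ga_2$). The only cosmetic differences are that the paper starts directly from a combined relation and uses $X_{1,k},Y_{-1,k}$ rather than $X_{-1,j},Y_{1,j}$, and for the scalar step uses $X_{1,k},X_{-1,k}$ with $X$ scalar instead of your $A_{\pm 1,j}I_n$; the residue computations are the same either way.
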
 
\begin{proof}
Assume that there is a linear relation
\begin{equation}
0=\sum_{i=1}^N\al_i\ga_{1,\w,C_i}
+
\sum_{i=1}^N\beta_i\ga_{2,C_i},\quad \alpha_i,\beta_i\in\C.
\end{equation}
The  last sum will not appear in the simple algebra
case. Recall that for a  pair $L,L'\in\gb$ 
the above cocycles can be calculated by taking residues
\begin{equation}\label{E:sumlin}
0=\sum_{i=1}^N\al_i\res_{P_i}(\tr (L\cdot \nw L'))
+
\sum_{i=1}^N\beta_i\res_{P_i}(\tr(L)\cdot \tr(\nw L')).
\end{equation}
In the first sum the Cartan-Killing form is
present which is non-degenerated. Hence there exist $X,Y
\in\g$  such that $\tr(XY)\ne0$ and $\tr(X)=\tr(Y)=0$.
For $k=1,\ldots, N$, 
using the almost-graded structure and following 
\refP{locex} we take $L=X_{1,k}$ and $L'=Y_{-1,k}$.
In the neighbourhood of the point $P_l$, $l=1,\ldots, N$ we
have
\begin{equation}
\begin{gathered}
L(z_l)=Xz_l\delta_l^k+O(z_l^2),
\quad
L'(z_l)=Yz_l^{-1}\delta_l^k+O(z_l^0),
\\
\nw L'(z_l)=-Yz_l^{-2}\delta_l^k+O(z_l^{-1}),
\end{gathered}
\end{equation}
as $\nw L'=dL'+[\omega,L']$.
Hence,
\begin{equation}
\res_{P_l}(\tr(L\cdot\nw L'))= -\tr(XY)\delta_l^k.
\end{equation}
As $\tr(X)=0$ the second sum will vanish anyway and
we conclude $\alpha_k=0$, for all $k=1,\ldots, N$.
For the second sum we take $X=Y$ a nonvanishing scalar
matrix and chose $L=X_{1,k}$ and $L'=X_{-1,k}$. We  obtain 
 $\beta_k=0$ for all $k=1,\ldots, N$.
\end{proof}
\begin{proposition}
($\gb=\glb(n)$)
Let \
$\ga=\sum_{i=1}^N\beta_i\ga_{2,C_i}$  
be a nontrivial linear combination, then it is not a coboundary.
\end{proposition}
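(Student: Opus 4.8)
The plan is to exploit that the scalar (trace) part of $\glb(n)$ is abelian. Recall the splitting $\glb(n)=\snb(n)\oplus\slnb(n)$ with $\snb(n)\cong\A$ consisting of scalar matrices. Since any two scalar matrices commute pointwise, $\snb(n)$ is an abelian subalgebra, so every coboundary vanishes identically on it: if $\ga=\delta\phi$ with $\ga(L,L')=\phi([L,L'])$, then $\ga(L,L')=\phi(0)=0$ whenever $L,L'\in\snb(n)$. Hence it suffices to produce a single pair of scalar elements $L,L'$ with $\ga(L,L')\ne0$; this already contradicts $\ga$ being a coboundary.

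First I would evaluate the cocycles on scalars. For $L=fI_n$ and $L'=gI_n$ with $f,g\in\A$ we have $[\w,L']=0$, so $\nw L'=dL'=dg\,I_n$, and $\tr(L)=nf$, $\tr(\nw L')=n\,dg$. Therefore
\[
\ga_{2,C_i}(L,L')=\res_{P_i}\big(\tr(L)\cdot\tr(\nw L')\big)=n^2\,\res_{P_i}(f\,dg),
\]
and consequently $\ga(L,L')=n^2\sum_{i=1}^N\beta_i\,\res_{P_i}(f\,dg)$. The task is thus reduced to finding $f,g\in\A$ whose residue data separates the chosen point.

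Next I would use the Krichever-Novikov basis $\{A_{m,s}\}$ of $\A$ recalled in \refS{kngrad}, with ${A_{m,s}}_|(z_s)=z_s^m+O(z_s^{m+1})$ and order $(m+1)$ at the remaining points of $I$. Fix an index $k$ with $\beta_k\ne0$ and set $f=A_{1,k}$, $g=A_{-1,k}$. Near $P_k$ one reads off $f=z_k+O(z_k^2)$ and $g=z_k^{-1}+O(1)$, so $f\,dg=(-z_k^{-1}+O(1))\,dz_k$ and $\res_{P_k}(f\,dg)=-1$; at every other $P_i$ the order prescriptions give $\ord_{P_i}(f)\ge2$ and $\ord_{P_i}(dg)\ge0$, whence $f\,dg$ is holomorphic there and $\res_{P_i}(f\,dg)=0$. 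Thus $\sum_i\beta_i\res_{P_i}(f\,dg)=-\beta_k$ and $\ga(L,L')=-n^2\beta_k\ne0$, which is the desired contradiction.

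The only delicate point is the residue bookkeeping in the last step, namely checking that the pair $(A_{1,k},A_{-1,k})$ detects exactly $P_k$ and contributes nothing at the other incoming points. This is immediate from the exact orders of the basis elements $A_{m,s}$; everything else, the reduction to the abelian part and the scalar evaluation of $\ga_{2,C_i}$, is formal.
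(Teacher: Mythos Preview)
Your proof is correct and follows essentially the same approach as the paper: both exploit that $\snb(n)$ is abelian so coboundaries vanish there, and then evaluate $\ga$ on a pair of scalar elements concentrated at a single point $P_k$ to extract the coefficient $\beta_k$. The only cosmetic difference is that you work with $A_{1,k}I_n$ and $A_{-1,k}I_n$ via the identification $\snb(n)\cong\A$, whereas the paper uses the Lax operator basis elements $X_{1,k},X_{-1,k}$ for a nonzero scalar matrix $X$; these have the same local expansions at the points of $I$, so the residue computations are identical.
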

\begin{proof}
Recall from \refE{glsplit} that $\snb(n)$  is an abelian
subalgebra of $\glb(n)$. Hence, every coboundary 
restricted to it will be identically zero.
If we take again as in the previous proof elements
$X_{1,k}$ and $X_{-1,k}$ from the scalar subalgebra we 
obtain as above $\beta_k=0$.
\end{proof}

\begin{proposition}\label{P:nonbound}
Let $\ga=\sum_{i=1}^N\al_i\ga_{1,\w,C_i}$ be
a non-trivial linear combination then 
it is not a coboundary.
\end{proposition}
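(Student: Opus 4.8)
The plan is to argue by contradiction. Suppose $\ga=\sum_{i=1}^N\al_i\ga_{1,\w,C_i}=\delta\phi$ is a coboundary for some linear form $\phi$ on $\gb$; I will derive $\al_k=0$ for every $k=1,\dots,N$, contradicting the assumed non-triviality. For $\gb=\glb(n)$ this is immediate: by \refE{glsplit} the scalar part $\snb(n)\cong\A$ is an \emph{abelian} subalgebra, so $\ga$ restricted to it equals $\delta(\phi|_{\snb(n)})$ and hence vanishes identically. On the other hand, for scalar-valued $L=f\,I_n$ one has $\nw(g\,I_n)=(dg)I_n$, so $\ga_{1,\w,C_i}$ restricts on $\snb(n)$ to $n\,\res_{P_i}(f\,dg)$; evaluating the combination on $f=A_{1,k}$, $g=A_{-1,k}$ produces (up to a non-zero factor) $\al_k$. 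Thus every $\al_k$ vanishes and the $\gl(n)$ case is settled.

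The genuine difficulty is $\g$ simple, where no such scalar subalgebra is available. Here the computational input is a single residue pairing. Since the trace form of $\g$ is non-degenerate, fix $H$ in a Cartan subalgebra of $\g$ with $\tr(H^2)\ne0$. Using the normalization \refE{locex} of \refP{locex} and that $\w$ may be taken holomorphic at $I$, the same residue computation as in \refP{linind} gives, for every $a\in\Z$,
\[
\ga(H_{a,k},H_{-a,k})=-a\,\al_k\,\tr(H^2),
\]
the terms $\ga_{1,\w,C_i}$ with $i\ne k$ contributing no residue. Since $[H,H]=0$, \refP{almprod} shows that $[H_{a,k},H_{-a,k}]$ has vanishing degree-$0$ component and therefore lies in the \emph{fixed} finite-dimensional space $\gb_1\oplus\cdots\oplus\gb_S\subseteq\gb_{(1)}$, independently of $a$. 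Writing the coboundary relation as $-a\,\al_k\,\tr(H^2)=\phi\big([H_{a,k},H_{-a,k}]\big)$, the whole matter reduces to showing that $\phi$ annihilates this positive-degree bracket.

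The conceptual reason it should is the passage to the associated graded: by \refP{almprod} the leading bracket terms make $\mathrm{gr}\,\gb$ a direct sum of $N$ copies of the loop algebra $\g\otimes\C[t,t^{-1}]$, the $k$-th copy carrying $X_{m,k}\mapsto X\otimes t^m$, and the top symbol of $\ga$ is $\sum_k\al_k$ times the standard affine cocycle on the $k$-th copy. In the genuinely graded situation (empty Tyurin data) the bracket $[H_{a,k},H_{-a,k}]$ equals $0$ exactly, so $\phi$ of it vanishes and $\al_k=0$ follows at once, which is precisely Garland's argument \cite{Gar}.

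The main obstacle is that for non-empty Tyurin data the almost-grading produces non-zero higher-order tails in $[H_{a,k},H_{-a,k}]$, and one must show $\phi$ still kills them. I would exploit that $\ga$ is bounded from above with upper bound $0$: the identity $\delta\phi=\ga$ forces $\phi\big([\gb_n,\gb_m]\big)=0$ whenever $n+m\ge1$, while \refL{weak} together with the simplicity of $\g$ expresses any positive-degree element, up to arbitrarily high order, as a sum of exactly such commutators. Closing this recursion against the uncontrolled high-order remainders is the delicate step, carried out by the recursive techniques of \cite{Scocyc} and \cite{Saff}; it yields $\phi|_{\gb_{(1)}}=0$, whence $\al_k\,\tr(H^2)=0$ and $\al_k=0$ for all $k$, the desired contradiction.
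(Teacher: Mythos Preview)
Your argument has a genuine gap in the simple case that cannot be closed by the route you suggest. You correctly identify that $[H_{a,k},H_{-a,k}]$ lies in the fixed finite-dimensional tail $\gb_1\oplus\cdots\oplus\gb_S$, and that $\phi$ annihilates commutators $[L,L']$ with $L\in\gb_n$, $L'\in\gb_m$, $n+m\ge1$. But these two facts do not combine to give $\phi([H_{a,k},H_{-a,k}])=0$. \refL{weak} only lets you write an element of $\gb_{(1)}$ as such a commutator sum \emph{plus a remainder} in $\gb_{(m)}$; iterating pushes the remainder to arbitrarily high degree but never eliminates it, and $\phi$ is an \emph{arbitrary} linear form on an infinite-dimensional space with no continuity or boundedness whatsoever. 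There is no mechanism forcing $\phi$ to vanish on those remainders, so the recursion does not close. The references you invoke do not contain an argument of this type either.

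The paper's device sidesteps the problem entirely and is worth knowing: instead of the almost-graded basis elements $H_{n,k}$, use the $\A$-module structure and set $H_{(n,k)}:=H_{0,k}\cdot A_{n,k}\in\gb$. These agree with $H_{n,k}$ to leading order, so the residue computation still gives $\ga(H_{(-1,k)},H_{(1,k)})=\al_k\,\tr(H^2)\ne0$. But now both elements are $\A$-multiples of the \emph{same} operator $H_{0,k}$, so
\[
[H_{(-1,k)},H_{(1,k)}]=[H_{0,k},H_{0,k}]\,A_{-1,k}A_{1,k}=0
\]
\emph{exactly}, not merely modulo higher degree. Hence $\phi([H_{(-1,k)},H_{(1,k)}])=\phi(0)=0$, which contradicts the cocycle value just computed. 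No recursion, no control of $\phi$ on tails, is needed.
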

\begin{proof}
Assume that $\ga$ is a coboundary. This means that there exists a
linear form $\phi:\gb\to\C$ such that $\forall L,L'\in\gb$
\begin{equation}\label{E:contra}
\ga(L,L')=\sum_{i=1}^N\alpha_i\res_{P_i}\tr(L\cdot\nw L')= \phi([L,L']).
\end{equation}
Assume that $\ga\ne 0$, hence one of the coefficients $\alpha_k$
will be non-zero. 
Take $H\in\fh$ with $\kappa(H,H)\ne 0$, where $\fh$ is the Cartan
subalgebra of the simple part of $\g$ and $\kappa$  its
Cartan-Killing form. Let $H_{0,k}\in\gb$  be the element defined by
\refE{locex}. In particular, we have $H_{0,k}=H+O(z_k)$.
We set%
\footnote{Notice  that  $H_{(n,k)}$ and $H_{n,k}$, in general, are
different but coincide up to higher order.} $H_{(n,k)}:=H_{0,k}\cdot
A_{n,k}\in\gb$ and hence $H_{(n,k)}=H\cdot A_{n,k}+O(z_k^{n+1})$
in the neighbourhood of the point $P_k$. 
Recall that from  the local forms \refE{locex} and \refE{Aset} 
of our basis elements we have 
in the neighbourhood of points $P_l$ with $l\ne k$ 
\begin{equation}\label{E:others}
H_{n,k}=O(z_l^{n+1}),\quad  
A_{n,k}=O(z_l^{n+1}), \quad
H_{(n,k)}=O(z_l^{n+1}).
\end{equation}
In  the
following, let $n\ne 0$. We have
\begin{equation}
\nw H_{(n,k)}=\nw(H_{0,k}\cdot A_{n,k})= \nw(H_{0,k})\cdot
A_{n,k}+H_{0,k}\;dA_{n,k}.
\end{equation}
The expression $\nw H_{0,k}$ is of nonnegative order, $A_{n,k}$ is of
order $n$, $H_{0,k}$ of order 0 and $dA_{n,k}$ of order $n-1$ at the point
$P_k$. Hence
\begin{equation}
\nw H_{(n,k)}=H_{0,k}\;dA_{n,k}+O(z_k^{n})dz_k.
\end{equation}
Now we compute
\begin{equation}
\ga(H_{(-1,k)},H_{(1,k)})= \sum_{i=1}^N\alpha_i\res_{P_i}
\tr(H_{(-1,k)}\cdot \nw
H_{(1,k)})
= \alpha_k\res_{P_k}
\tr(H_{(-1,k)}\cdot \nw
H_{(1,k)}).
\end{equation}
The last equality follows from the fact that by 
\refE{others} we do not have any poles at the points $P_l$ for $l\ne
k$. From the above it follows
\begin{equation}
(\al_k)^{-1}\ga(H_{(-1,k)},H_{(1,k)})
= \res_{P_k}\tr(H_{0,k} A_{-1,k} H_{0,k} dA_{1,k})=
\res_{P_k}\tr(H_{0,k}^2 \frac{dz_k}{z_k}).
\end{equation}
As $H^2_{0,k}=H^2+O(z_k)$ we obtain
\begin{equation}\label{E:con1}
(\al_k)^{-1}\ga(H_{(-1,k)},H_{(1,k)})= \res_{P_k}(\tr (H^2)\frac{dz_k}{z_k}) =\tr
(H^2)=\beta\cdot\kappa(H,H)\ne 0,
\end{equation}
with a non-vanishing constant $\beta$ relating the trace form
with the Cartan-Killing form. But
\begin{equation}\label{E:con2}
[H_{(-1,k)},H_{(1,k)}]= [H_{0,k}A_{-1,k}, H_{0,k}A_{1,k}]= 
[H_{0,k},H_{0,k}]A_{-1,k}A_{1,k}=0.
\end{equation}
The relations \refE{con1} and \refE{con2} are in contradiction to
\refE{contra}. 
\end{proof}

Now we are able to formulate the basic theorem.

\begin{theorem}\label{T:bounded}
$ $

(a) If $\g$ is simple (i.e. $\gb=\sln(n), \so(n), \spn(2n)$) then
the space of  bounded cohomology  classes is $N$-dimensional. If we
fix any connection form $\w$ then this space has as basis 
the classes of $\ga_{1,\w,C_i}$, $i=1,\ldots, N$. 
Every $\L$-invariant (with respect
to the connection $\w$) bounded  cocycle is a linear combination  of
the $\gamma_{1,\w,C_i}$.

(b) For $\laxg=\bgl$ the space of local cohomology classes which
are $\L$-invariant having been restricted to the scalar subalgebra
is 2N-dimensional. If we fix any connection form $\w$ then the
space has as basis the classes of the cocycles
$\gamma_{1,\w,C_i}$ and $\gamma_{2,C_i}$, $i=1,\ldots, N$. 
Every $\L$-invariant local
cocycle is a linear combination of the $\gamma_{1,\w,C_i}$ and
$\gamma_{2,C_i}$.
\end{theorem}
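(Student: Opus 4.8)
The plan is to bound the dimension of the relevant cohomology space both from below and from above. Since by \refE{glsplit} we have $\glb(n)=\snb(n)\oplus\slnb(n)$ with $\snb(n)\cong\A$, I would settle the simple case (a) first and then recover the $\gl(n)$ statement (b) by combining the simple summand with the scalar (function-algebra) summand. For the lower bound in (a), \refP{linind} and \refP{nonbound} already show that the $N$ cocycles $\ga_{1,\w,C_i}$ are linearly independent and that no nontrivial combination of them is a coboundary; together with their $\L$-invariance (\refP{linv}, applied with $\w=\w'$) this yields $\dim\H^2_b(\gb,\C)\ge N$. The real content is the matching upper bound, which I would obtain in two logically independent steps: first classify the $\L$-invariant bounded cocycles, and separately show that every bounded class has an $\L$-invariant representative.

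For the classification of $\L$-invariant bounded cocycles I would start from the fact, established for the special cycles in \refS{local}, that a bounded cocycle vanishes on $\gb_n\times\gb_m$ once $n+m$ exceeds a fixed bound. Writing $\L$-invariance for a homogeneous pair $(X_{n,s},Y_{m,r})$ tested against $e_{k,i}$ and inserting the structure formula \refE{allstr}, every contribution coming from the correction terms $L\in\gb_{(k+n+1)}$ lands in a degree where $\ga$ vanishes by boundedness, so only the leading terms survive. This produces clean recursion relations among the values $\ga(X_{n,s},Y_{m,r})$; combined with the cocycle identity \refE{cohcoc} and with the weak perfectness of \refL{weak} (which lets low-degree behaviour be computed from commutators of higher-degree elements modulo terms killed by boundedness), they force $\ga$ to be determined by its finitely many values on the anti-diagonal $n+m=0$, to be simultaneously local, and to induce a symmetric $\g$-invariant bilinear form on $\g$. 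Since $\g$ is simple this form is a multiple of the Cartan-Killing form $\ka$, leaving exactly one scalar $\lambda_s$ per point $P_s$. Matching these $N$ scalars against the leading residues $\ga_{1,\w,C_i}(X_{n,i},Y_{-n,i})=-n\,\tr(XY)$ identifies $\ga$, up to a coboundary, with $\sum_{i}\lambda_i\,\ga_{1,\w,C_i}$.

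The second step, reducing an arbitrary bounded cohomology class to an $\L$-invariant representative, is the part I expect to be the main obstacle, and is precisely why the multipoint case cannot be quoted from the two-point paper. Here I would use the root-space decomposition of the underlying simple $\g$ (Cartan part $\car$ together with the root spaces) to build a linear functional $\phi$ on $\gb$ whose coboundary $\delta\phi$ corrects $\ga$ into an $\L$-invariant cocycle. The delicate points are that the grading is only an \emph{almost}-grading, so the structure constants in \refP{almprod} and \refE{allstr} drag along uncontrolled higher-order tails, and that boundedness of the corrected cocycle must be preserved. Controlling these tails simultaneously at all $N$ incoming points $P_i$, rather than at a single point as in the graded classical situation treated by Garland, is where the genuine work lies; the almost-gradedness and \refL{weak} are what keep the relevant sums finite.

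For part (b) I would exploit the $\L$-module splitting \refE{glsplit}. On the simple summand $\slnb(n)$ part (a) supplies the $N$ classes $\ga_{1,\w,C_i}$. On the scalar summand $\snb(n)\cong\A$, which is abelian, every antisymmetric form is a cocycle and all coboundaries vanish, so boundedness from above alone leaves an infinite-dimensional space; this is exactly why (b) must be phrased in terms of \emph{local} cocycles. Imposing locality and $\L$-invariance on the function-algebra part reproduces the known finite classification and yields the $N$ further classes represented by $\ga_{2,C_i}$. It then remains to show that, for a local $\L$-invariant cocycle on $\glb(n)$, the mixed values between $\snb(n)$ and $\slnb(n)$ are forced to vanish or to be coboundaries by the same recursion together with invariance of the induced pairing, so that the cocycle splits along \refE{glsplit}. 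Combining the two summands gives the $2N$-dimensional space with basis $\{\ga_{1,\w,C_i},\ga_{2,C_i}\}_{i=1}^N$, and the independence and non-coboundary statements (\refP{linind}, \refP{nonbound}, and the corresponding statement for the scalar cocycles $\ga_{2,C_i}$) confirm the dimension count.
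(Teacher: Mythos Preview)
Your overall architecture matches the paper's: the lower bound via \refP{linind}, \refP{nonbound}, \refP{linv}; the classification of $\L$-invariant bounded cocycles by recursion from \refE{allstr} down to level zero, yielding one symmetric invariant form $\psi_{\ga,s}$ per point $P_s$ (this is \refS{induction}, culminating in \refT{fixing} and \refP{lsimp}); and a separate argument with the Chevalley basis to handle the general bounded case (\refS{direct}). The $\gl(n)$ splitting along \refE{glsplit} is also treated as you describe.

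There is, however, a genuine gap in your second step. You propose to construct $\phi$ so that $\ga-\delta\phi$ is \emph{$\L$-invariant}. The paper says explicitly, at the opening of \refS{direct}, that it does not have a direct proof of this. What the root-space construction there actually produces is a \emph{normalized} cocycle in the sense of \refD{norc}, i.e.\ one satisfying the handful of vanishing conditions \refE{p1}; $\L$-invariance of the normalized cocycle is never verified. Instead one shows (\refT{bounduni}, via \refP{summary} and \refP{p51}) that normalized bounded cocycles are determined by the $N$ values $\ga(H^{\a_1}_{1,s},H^{\a_1}_{-1,s})$, hence $\dim\H^2_b(\gb,\C)\le N$. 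Only after comparing this with the $N$ independent $\L$-invariant classes from the first step does one conclude, \emph{a posteriori}, that every bounded class has an $\L$-invariant representative (\refC{uni}). If you attempt to check $\L$-invariance of $\ga-\delta\Phi$ directly you will find that the recursive definition \refE{bel} of $\Phi$ forces only the relations \refE{p1}, which are far fewer than $\L$-invariance demands; the higher-order tails in the almost-grading make a direct verification intractable, and this is precisely the point where the indirect dimension-counting argument is needed.

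Two minor corrections. In your first step, a bounded $\L$-invariant cocycle equals $\sum_i\lambda_i\ga_{1,\w,C_i}$ on the nose, not merely up to a coboundary: by \refT{fixing} two bounded $\L$-invariant cocycles with the same forms $\psi_{\ga,s}$ coincide. Also, such cocycles need not be local, since the individual $\ga_{1,\w,C_i}$ are only bounded from above. For part~(b), it is not locality that makes the scalar piece finite-dimensional; already boundedness from above together with $\L$-invariance on $\snb(n)\cong\A$ yields the $N$-dimensional space spanned by the $\ga_{2,C_i}$ (the cited result from \cite{Scocyc}), so your explanation of why (b) must say ``local'' is off.
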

\begin{proof}[Proof of the theorem]
Here we only outline the proof. The technicalities are postponed until
Sections \ref{S:induction} and \ref{S:direct}.

By Propositions \ref{P:lsimp} and \ref{P:lcom} it follows that
$\L$-invariant and bounded cocycles are necessarily linear
combinations of the claimed form. This proves the theorem for the
cohomology space $\H_{b,\L}(\gb,\C)$. For the scalar subalgebra
we are done since we included the $\L$-invariance into the
conditions of the theorem. For semi-simple algebras we have to
show that there is an $\L$-invariant representative in each local
cohomology class. But by \refT{bounduni} the space
$\H_{b}(\gb,\C)$ is at most N-dimensional. As by
\refP{nonbound} no non-trivial   
linear combination of the  cocycles $\ga_{1,\w,C_i}$  
is a coboundary, this space is exactly N-dimensional and
$[\ga_{1,\w,C_i}]$ for $i=1,\ldots ,N$ constitute a basis.
\end{proof}
We conclude the following.
\begin{corollary}\label{C:uni}
Let $\g$ be a simple classical Lie algebra and $\gb$ the
associated Lax operator  algebra. Let $\w$ be a fixed connection
form. Then in each $[\ga]\in \H_{b}(\gb,\C)$ there exists a
unique representative $\ga'$ which is bounded and $\L$-invariant
(with respect to $\w$). Moreover, $\ga'=\sum_{i=1}^Na_i\ga_{1,\omega,C_i}$, with
$a_i\in\C$.
\end{corollary}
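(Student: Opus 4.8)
The plan is to deduce the statement as a direct consequence of \refT{bounded}(a), splitting the claim into existence and uniqueness of the distinguished representative, with the genuine work already carried out upstream. Throughout I keep the connection form fixed as $\w=\w'$, so that the $\L$-invariance of the geometric cocycles guaranteed by \refP{linv}(b) is available.

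For existence I would argue as follows. Given a bounded class $[\ga]\in\H_{b}(\gb,\C)$, \refT{bounded}(a) tells us that the classes $[\ga_{1,\w,C_i}]$, $i=1,\ldots,N$, form a basis of the $N$-dimensional space $\H_{b}(\gb,\C)$. Hence there are unique scalars $a_i\in\C$ with $[\ga]=\sum_{i=1}^N a_i[\ga_{1,\w,C_i}]$, and I set $\ga'=\sum_{i=1}^N a_i\ga_{1,\w,C_i}$. By construction $\ga'$ represents $[\ga]$. It is bounded (from above, with upper bound zero) because each summand $\ga_{1,\w,C_i}$ was shown to be bounded over the circles $C_i$ around the points of $I$, and it is $\L$-invariant since each $\ga_{1,\w,C_i}$ is $\L$-invariant by \refP{linv}(b). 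This already produces a representative of the required form.

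The real content is uniqueness, and it is here that \refP{nonbound} enters. Suppose $\ga'$ and $\ga''$ are two bounded, $\L$-invariant representatives of the same class. By the final assertion of \refT{bounded}(a), every $\L$-invariant bounded cocycle is a linear combination of the $\ga_{1,\w,C_i}$, so I may write $\ga'=\sum_i a_i\ga_{1,\w,C_i}$ and $\ga''=\sum_i b_i\ga_{1,\w,C_i}$. Being cohomologous, their difference $\ga'-\ga''=\sum_i(a_i-b_i)\ga_{1,\w,C_i}$ is a coboundary. But \refP{nonbound} asserts that no non-trivial linear combination of the $\ga_{1,\w,C_i}$ is a coboundary; hence $a_i=b_i$ for all $i$ and $\ga'=\ga''$. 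The explicit expression $\ga'=\sum_{i=1}^N a_i\ga_{1,\w,C_i}$ is then exactly the representative built in the existence step, completing the proof.

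At the level of the corollary there is no further obstacle; the difficulty is entirely inherited from its two inputs — the spanning statement for $\L$-invariant bounded cocycles and the linear independence of the $[\ga_{1,\w,C_i}]$ — which are the hard results proved in Sections \ref{S:induction} and \ref{S:direct} and in \refP{nonbound}. The only point worth stressing is conceptual rather than technical: uniqueness is asserted \emph{within} the $\L$-invariant representatives, and a generic representative of $[\ga]$ need not be $\L$-invariant, which is precisely the warning preceding the statement.
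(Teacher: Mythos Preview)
Your argument is correct and is precisely the unpacking of ``We conclude the following'' that the paper leaves implicit: existence via the basis statement in \refT{bounded}(a) together with \refP{linv}(b), and uniqueness via the last clause of \refT{bounded}(a) combined with \refP{nonbound}. There is nothing to add; the paper itself supplies no separate proof of this corollary.
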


\medskip
\begin{proposition}\label{P:boundlinv}
$ $

\noindent (a) Let $\ga$ be a bounded  and $\L$-invariant cocycle
which is a coboundary, then $\ga= 0$.

\noindent (b) Let $\g$ be simple, then the cocycle $\ga_{1,\w',C_i}$
is $\L$-invariant with respect to $\w$, if and only if $\w=\w'$.
\end{proposition}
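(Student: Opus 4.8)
The plan is to prove (a) first and then obtain (b) from the structural mechanism behind (a) together with a residue computation; the ``if'' part of (b) is already \refP{linv}(b), so only the converse needs work.

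The heart of (a) is the following self-contained observation, which I would isolate as a lemma: \emph{every bounded (from above) and $\L$-invariant cocycle $\ga$ is concentrated in total degree zero}, i.e.\ $\ga(\gb_m,\gb_n)=0$ whenever $m+n\neq 0$. To prove it I would test $\L$-invariance against the ``degree field'' $E:=\sum_{s=1}^N e_{0,s}$. By \refP{almgrad}(b), applying $\nw_E$ to a homogeneous element of degree $m$ returns $m$ times it modulo $\gb_{(m+1)}$, so for $L\in\gb_m$, $L'\in\gb_n$ the invariance identity rearranges to
\begin{equation}
(m+n)\,\ga(L,L')=-\ga(R,L')-\ga(L,R'),\qquad R\in\gb_{(m+1)},\ R'\in\gb_{(n+1)}.
\end{equation}
The right-hand side only involves values of $\ga$ in total degree $\ge m+n+1$. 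Letting $R_1$ be the (finite, since $\ga$ is bounded) top total degree on which $\ga$ is non-zero, the identity at $m+n=R_1$ reads $R_1\,\ga(L,L')=0$, forcing $R_1=0$; a downward induction on total degree then kills every value with $m+n\le -1$ as well. This gives the claimed concentration and, incidentally, re-proves that the upper bound is $0$.

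With $\ga$ now concentrated in degree zero and a coboundary, $\ga=\delta\phi$, I would first dispose of the ``residue part''. Exactly as in \refP{nonbound}, pick $H$ in the Cartan subalgebra $\fh$ of (the simple part of) $\g$ with $\kappa(H,H)\neq 0$ and set $H_{(\pm1,p)}:=H_{0,p}\cdot A_{\pm1,p}$. Since $[H_{(-1,p)},H_{(1,p)}]=A_{-1,p}A_{1,p}\,[H_{0,p},H_{0,p}]=0$, the coboundary property gives $\ga(H_{(-1,p)},H_{(1,p)})=0$; on the other hand, concentration in degree zero collapses all cross terms and leaves $\ga(H_{-1,p},H_{1,p})$, which is (up to the fixed constant relating $\tr$ and $\kappa$) the coefficient of the $\ga_{1,\w,C_p}$-component. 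Hence that coefficient vanishes for every $p$. The remaining, ``off-Cartan'' part of the degree-zero values is where the real difficulty lies: because $\gb$ is not perfect, one cannot realise $[X,Y]\neq 0$ by elements that commute in $\gb$, so the coboundary relation alone does not detect it. To finish I would invoke the classification of \refS{induction}--\refS{direct} (\refP{lsimp}, \refP{lcom}): it shows that the only bounded $\L$-invariant cocycles are the linear combinations of the $\ga_{1,\w,C_i}$ (together with the $\ga_{2,C_i}$ in the $\gl(n)$ case). Combined with \refP{nonbound} (and the preceding $\gl$-analogue) that no non-trivial such combination is a coboundary, this forces $\ga=0$. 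The main obstacle is thus precisely this last step: ruling out a spurious ``diagonal'' degree-zero cocycle, which requires the full recursive analysis and is the reason the classification cannot be bypassed here.

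Finally I would deduce (b). Assume $\ga_{1,\w',C_i}$ is $\L$-invariant with respect to $\w$. By \refP{linv}(b) so is $\ga_{1,\w,C_i}$, hence their difference is $\L$-invariant; by \refP{cind}(b) this difference is the coboundary $\delta\psi_i$ with $\psi_i(L)=\res_{P_i}\tr((\w-\w')L)$. Since both $\w$ and $\w'$ are holomorphic at $P_i$ while $[\gb_m,\gb_n]$ has order $\ge m+n$ there, $\delta\psi_i(\gb_m,\gb_n)=\res_{P_i}\tr((\w-\w')[\gb_m,\gb_n])=0$ as soon as $m+n\ge 0$; in particular $\delta\psi_i$ is bounded from above. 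The concentration lemma then forces $\delta\psi_i$ into total degree zero, where it already vanishes, so $\delta\psi_i\equiv 0$, i.e.\ $\res_{P_i}\tr((\w-\w')[L,L'])=0$ for all $L,L'$. Writing $\w-\w'=\big(\sum_{j\ge 0}\eta_j\,z_i^{\,j}\big)\,dz_i$ near $P_i$ and using \refP{almprod} (with $\g$ simple, hence perfect) to produce, for every $Z\in\g$ and every $k\le -1$, a commutator in $[\gb,\gb]$ with leading term $Z\,z_i^{\,k}$, the residue conditions yield $\tr(\eta_j Z)=0$ for all $Z$ by induction on $j$; non-degeneracy of the trace form on the simple $\g$ gives $\eta_j=0$ for every $j$. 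Thus $\w-\w'$ vanishes to infinite order at $P_i$ and, being meromorphic, $\w=\w'$. The converse ``if'' is \refP{linv}(b).
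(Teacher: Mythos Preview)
Your ``concentration lemma'' is false as stated: a bounded $\L$-invariant cocycle need \emph{not} vanish for $m+n<0$. The downward induction breaks at $m+n=-1$, because the right-hand side $-\ga(R,L')-\ga(L,R')$ then sits in total degree $\ge 0$, and level zero is precisely where the cocycle is non-trivial; you have no vanishing there to feed into the next step. Concretely, the geometric cocycles $\ga_{1,\w,C_i}$ themselves are $\L$-invariant and bounded by zero but are generically non-zero on pairs of negative total degree (the higher-order terms of the almost-graded basis contribute to the residue). What your recursion with $E=\sum_s e_{0,s}$ actually proves is exactly \refP{zerobound}: the upper bound is zero, and a bounded $\L$-invariant cocycle vanishing at level zero vanishes identically.

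This does not derail your proof of (a): there you ultimately invoke the classification (\refP{lsimp}, \refP{lcom}) together with \refP{nonbound} and its $\gl(n)$ analogue, which \emph{is} the paper's argument; the concentration preamble is superfluous. For (b), however, your reasoning as written leans on the false lemma. It is easily repaired: once you have observed that $\delta\psi_i$ is bounded from above, $\L$-invariant, and vanishes already at level zero (since $\w-\w'$ is holomorphic at $P_i$), \refP{zerobound}(b) yields $\delta\psi_i\equiv 0$ directly. The paper takes the shorter route of simply applying part (a) to $\delta\psi_i$ (bounded, $\L$-invariant, coboundary $\Rightarrow$ zero). Your final residue step deducing $\w=\w'$ is correct and coincides with the paper's use of \refL{weak}.
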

\begin{proof}

(a) By \refT{bounded}  we get $\ga=\sum_{i=1}^N
(\a_i\ga_{1,\w,C_i}+\b_i\ga_{2,C_i})$, with all $\b_i=0$
for the case $\g$ is simple. 
The summands constitute a basis of the cohomology. 
Hence, $\ga$ can only be a coboundary if all coefficients
vanish.

\noindent (b) As $\ga_{1,\w,C_i}$ and  $\ga_{1,\w',C_i}$ are local and
$\L$-invariant with respect to $\w$ their difference
$\ga_{1,\w,C_i}-\ga_{1,\w',C_i}$ 
is also local and $\L$-invariant. By
\refP{cind} it is  a coboundary. Hence by part (a)
$\ga_{1,\w,C_i}-\ga_{1,\w',C_i}= 0$. The relation \refE{cobw} gives the
explicit expression for the left hand side. Assume $\w\ne\w'$. Let
$m$ be the order of the element
\begin{equation}
\theta=\w-\w'=(\theta_mz_i^m+O(z_i^m))dz_i
\end{equation}
at the point $P_i$. As  $\g$ is simple the trace form $\tr(A\cdot
B)$ is  nondegenerate and  we find
\begin{equation}
\hat\theta=\hat\theta_{-m-1}z_i^{-m-1}+O(z_i^{-m}),
\end{equation}
such that $\b=\tr(\theta_m\cdot\hat\theta_{-m-1})\ne 0$. By
\refL{weak} we get $\hat\theta=[L,L']+L''$  with $\ord_{P_i}(L'')\ge
-m$. Hence,
\begin{multline}
0\ne \b=\tr(\theta_m\cdot\hat\theta_{-m-1})=
\frac{1}{2\pi\i}\int_{C_i}\tr\left((\w-\w')\cdot(
[L,L']+L'')\right)
\\
=\frac{1}{2\pi\i}\int_{C_i}\tr\left((\w-\w')\cdot [L,L']\right)
=\ga_{1,\w,C_i}(L,L')-\ga_{1,\w',C_i}(L,L')=0
\end{multline}
which is a contradiction.
\end{proof}

\medskip 
After these results which are valid for bounded cocycles we will
deduce
the corresponding classification theorem for local cocycles. In some
sense this is the main theorem of this article.
It will show for example that for Lax operator algebras associated to
simple Lie algebras there is up to rescaling
and equivalence only one non-trivial almost-graded central 
extension.
 
Recall the relation for the separating cycle
\begin{equation}\label{E:sepdecomp}
[C_S]=\sum_{i=1}^N[C_i]=-\sum_{j=1}^M
[C^*_j],
\end{equation}
and the corresponding relation for the cocycle obtained by integration.

\begin{theorem}\label{T:main}
$ $

(a) If $\g$ is simple (i.e. $\gb=\sln(n), \so(n), \spn(2n)$) then
the space of local cohomology  classes is one-dimensional. If we
fix any connection form $\w$ then this space will be generated by
the class of $\ga_{1,\w,C_S}$. Every $\L$-invariant (with respect
to the connection $\w$) local cocycle is a scalar multiple of
$\gamma_{1,\w,C_S}$.

(b) For $\laxg=\bgl$ the space of local cohomology classes which
are $\L$-invariant having been restricted to the scalar subalgebra
is two-dimensional. If we fix any connection form $\w$ then the
space will  be generated by the classes of the cocycles
$\gamma_{1,\w,C_S}$ and $\gamma_{2,C_S}$. Every $\L$-invariant local
cocycle is a linear combination of $\gamma_{1,\w,C_S}$ and
$\gamma_{2,C_S}$.
\end{theorem}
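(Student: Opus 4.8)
The plan is to deduce \refT{main} from the classification of bounded cocycles in \refT{bounded} together with the symmetry between the sets $I$ and $O$. A local cocycle is in particular bounded from above, so every local cohomology class lies in $\H_{b}(\gb,\C)$, which by \refT{bounded} is spanned by the classes $[\ga_{1,\w,C_i}]$, $i=1,\ldots,N$, in the simple case. Since $[C_S]=\sum_{i=1}^N[C_i]$ in homology (see \refE{sepdecomp}), \refP{locg1} gives the identity of cocycles $\ga_{1,\w,C_S}=\sum_{i=1}^N\ga_{1,\w,C_i}$; by the boundedness theorem this cocycle is local, and by \refP{nonbound} its class is nonzero. Hence $\H_{loc}(\gb,\C)$ contains $[\ga_{1,\w,C_S}]$ and is at least one-dimensional, and it remains to prove $\dim\H_{loc}(\gb,\C)\le 1$.

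For the upper bound I would use that the whole set-up is symmetric under interchanging $I$ and $O$: a cocycle bounded from below for the $I$-grading is one bounded from above for the $O$-grading. Choosing the connection form $\w$ holomorphic on all of $A$ (its only forced poles sit at $W$), both \refT{bounded} and its mirror apply with the same $\w$: the unique $\L$-invariant representative of a bounded-above class has the shape $\sum_i a_i\,\ga_{1,\w,C_i}$, and that of a bounded-below class has the shape $\sum_j c_j\,\ga_{1,\w,C_j^*}$. Given a local class $[\ga]$, which is bounded on both sides, \refC{uni} and its mirror furnish two $\L$-invariant representatives $\ga'=\sum_i a_i\ga_{1,\w,C_i}$ and $\ga''=\sum_j c_j\ga_{1,\w,C_j^*}$ of the \emph{same} class. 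Their difference is a coboundary, so it vanishes on commuting pairs $L,L'$. Writing $\eta:=\tr(L\cdot\nw L')$ and recalling \refE{ga1}, this yields, for all commuting $L,L'\in\gb$,
\[
\sum_{i=1}^N a_i\,\res_{P_i}(\eta)=\sum_{j=1}^M c_j\,\res_{Q_j}(\eta).
\]
By \refP{locg1} the form $\eta$ has no poles off $A$, so the residue theorem supplies the second universal relation $\sum_i\res_{P_i}(\eta)+\sum_j\res_{Q_j}(\eta)=0$.

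The heart of the matter, and the step I expect to be the \emph{main obstacle}, is to show that these two are the only universal linear relations among the residues at the points of $A$; equivalently, that the residue vectors $\big(\res_{P_i}(\eta),\res_{Q_j}(\eta)\big)$ obtained from commuting pairs already span the hyperplane $\{\textstyle\sum=0\}$ in $\C^{N+M}$. To produce the needed elementary two-point configurations I would fix a Cartan subalgebra $\car\subseteq\g$: all $\car$-valued elements of $\gb$ commute pointwise, and for $L=Hf$, $L'=Hg$ with $f,g\in\A$ one computes, using $\tr(H[\w,H])=0$, that $\eta=\ka(H,H)\,f\,dg$ is a scalar meromorphic form on $\Sigma$ with poles only in $A$. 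A Riemann--Roch construction as in \refP{filtbas} then provides functions $f,g$ whose poles sit at any prescribed pair of points of $A$, realizing each elementary residue vector supported on two points with opposite values; these span the hyperplane. The delicate cases are the mixed pairs, with one point in $I$ and one in $O$, and the bookkeeping of the vanishing forced at $W$ (controlled by \refP{locg1}). Granting the spanning, the displayed relation holds on all of $\{\sum=0\}$ and must therefore be a multiple of the residue-theorem functional, forcing all $a_i$ equal to a common $\lambda$ and $c_j=-\lambda$. Thus $\ga'=\lambda\,\ga_{1,\w,C_S}$ and $[\ga]=\lambda[\ga_{1,\w,C_S}]$, which proves (a).

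For part (b) the decomposition $\glb(n)=\snb(n)\oplus\slnb(n)$ reduces the $\ga_1$-component to the simple case just treated, contributing $[\ga_{1,\w,C_S}]$. The remaining freedom is the scalar cocycle $\ga_{2,C}$; since $\tr(\nw L')=d\,\tr(L')$, it is carried entirely by the abelian subalgebra $\snb(n)\cong\A$, where commutativity is automatic and no coboundary correction is needed. Running the identical residue argument on the function algebra $\A$ with the form $\tr(L)\,d\,\tr(L')$ (again regular off $A$ by \refP{locg1}) forces the coefficients of the bounded-below presentation to be constant, so the $\ga_2$-part contributes exactly $[\ga_{2,C_S}]$. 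Hence the local $\L$-invariant cohomology for $\bgl$ is two-dimensional, spanned by $[\ga_{1,\w,C_S}]$ and $[\ga_{2,C_S}]$.
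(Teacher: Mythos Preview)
Your overall strategy matches the paper's: invoke \refT{bounded} with the roles of $I$ and $O$ swapped, obtain two presentations of the local class as $\sum_i a_i\,\ga_{1,\w,C_i}$ and as $\sum_j c_j\,\ga_{1,\w,C_j^*}$, and then force all coefficients to be constant by evaluating on well-chosen pairs. The implementation differs, though. The paper works directly with an $\L$-invariant local cocycle $\ga$, so that \refT{bounded} applied in both directions yields \emph{exact} equalities of cocycles (not merely of classes); hence $\sum_i a_i\,\ga_{1,\w,C_i}+\sum_j a_j^*\,\ga_{1,\w,C_j^*}=0$ holds pointwise, and one may test on \emph{non-commuting} pairs built by Riemann--Roch inside $\gb$ (poles concentrated at $P_1$ and $P_k$, holomorphic elsewhere on $A$). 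You instead allow a coboundary discrepancy between the two representatives and kill it by restricting to commuting test pairs.

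The genuine gap is in your construction of those commuting pairs. For nontrivial Tyurin data the elements $L=Hf$ with $H\in\car$ constant and $f\in\A$ are \emph{not} in $\gb$: even when $Hf$ is holomorphic at $\gamma_s$, the Lax condition demands that $\alpha_s$ be an eigenvector of the zeroth coefficient $L_{s,0}=H\cdot f(\gamma_s)$ (see \refE{gldef}, \refE{sodef}, \refE{spdef}), and for a generic $\alpha_s$ this fails for any nonscalar $H$. So your test elements do not lie in the algebra, and \refP{locg1} is of no help here since the issue is membership in $\gb$, not the pole structure of $\eta$. One can attempt a repair using $L=H_{0,k}\cdot f$ and $L'=H_{0,k}\cdot g$ (these are in $\gb$ via the $\A$-module structure and still commute; compare the proof of \refP{nonbound}), but then $\tr(H_{0,k}^2)$ is no longer constant and carries poles at $O$, so your clean two-point residue picture disintegrates and the spanning argument needs substantial rework. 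The paper's route sidesteps all of this: since no coboundary enters, commutativity is never needed, and the test pairs can be produced directly in $\gb$ by a Riemann--Roch count with arbitrary $X,Y\in\g$ satisfying $\tr(XY)\ne 0$.
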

\begin{proof}
Let $\ga$ be a local cocycle. This says it is bounded from above and
from below.
For simplicity we abbreviate in this proof
\begin{equation}
\ga_{1,i}:=\ga_{1,\w,C_i}\quad
\ga_{2,i}:=\ga_{2,C_i},\quad
\ga_{1,j}^*:=\ga_{1,\w,C^*_j},\quad
\ga_{2,j}^*:=\ga_{2,C^*_j}.
\end{equation}
If we switch the role of  $I$ and $O$ 
we get an inverted almost-grading.
Every bounded from
below cocycle of the original grading, will get bounded from above
with respect to the inverted grading. Hence we can employ
\refT{bounded} in both directions and obtain
for the same cocycle two representations
\begin{equation}\label{E:l1}
\ga=\sum_{i=1}^N a_i\ga_{1,i}
+\sum_{i=1}^N b_i\ga_{2,i}
=-\sum_{j=1}^M a^*_j\ga^*_{1,j}
-\sum_{j=1}^M b^*_j\ga^*_{2,j}
,\quad \text{with}\quad 
 a_i,  a^*_j, b_i, b_j^*\in\C.
\end{equation}
If either $N=1$ or $M=1$ then via \refE{sepdecomp} the 
cocycle is obtained via integration over a separating cycle.
Hence the statement.

Otherwise both $N,M>1$.
By \refP{linind} the type (1) and type (2) cocycles 
are linearly independent, hence can be treated 
independently also in this context.
First consider type (1).
 Note that from \refE{sepdecomp} we get the
relation that 
\begin{equation}\label{E:bal}
\sum_{i=1}^N\ga_{1,i}=-
\sum_{j=1}^M\ga_{1,j}^*.
\end{equation}
Hence, 
\begin{equation}\label{E:l2}
\ga_{1,1}^*=-\sum_{i=1}^N\ga_{1,i}
- \sum_{j=2}^M\ga_{1,j}^*.
\end{equation}
{}From \refE{l1} we get 
\begin{equation}
0=
\sum_{i=1}^N a_i\ga_{1,i}+
\sum_{j=1}^M a^*_j\ga^*_{1,j}.
\end{equation}
If we plug \refE{l2} into this relation
we obtain
\begin{equation}\label{E:l3}
0=
( a_1- a_1^*)\sum_{i=1}^N\ga_{1,i}
+\sum_{i=2}^N( a_i- a_1)\ga_{1,i}
+\sum_{j=2}^M( a_k^*- a_1^*)\ga^*_{1,j}.
\end{equation}
Fix a $k$.
We take $X,Y\in\gb$ such that $\tr(XY)\ne 0$. By Riemann-Roch 
there exists $L', L\in\gb$ such that around the point
$P_k$ we have
\begin{equation}
L(z_k)= Xz_k+O(z^2_k),
\qquad
L'(z_k)= Yz_k^{-1}+O(z^0_k),
\end{equation}
both holomorphic at the points in $O$ and at $P_l$, $l\ne 1,k$.
The elements might have pole orders of sufficiently high degree at
$P_1$ to guarantee existence. The weak singularities will not
disturb.
By construction
\begin{equation}
\begin{gathered}
\ga_{1,k}(L,L')\ne 0,\qquad
\ga_{1,l}(L,L')=0,\quad l=2,\ldots, N, l\ne k,
\\
\ga_{1,j}^*(L,L')=0,\quad j=1,\ldots, M.
\end{gathered}
\end{equation}
Hence
\begin{equation}
\sum_{i=1}^N\ga_{1,i}(L,L')=
-\sum_{j=1}^M\ga_{1,j}^*(L,L')=0.
\end{equation}
If we plug $(L,L')$ into \refE{l3},
all terms in \refE{l3} will vanish, with the only
exception
\begin{equation}
0=( a_k- a_1)\ga_{1,k}(L,L').
\end{equation}
This shows $ a_k- a_1$ for all $k$. (In a similar way we get
 $ a_j^*- a_1^*$ for all $j$.) 
In particular, our cocycle we started with 
(resp. the $\ga_1$ part of it) 
is a multiple of the
cocycle obtained by integration over the separating cycle.
This was the claim.
The proof for the $\ga_2$ part works completely the same if
we take $X=Y$ a nonzero scalar matrix.
\end{proof}

\medskip

As in the bounded case we obtain also for the local case the
following corollary.
\begin{corollary}\label{C:unil}
Let $\g$ be a simple classical Lie algebra and $\gb$ the
associated Lax operator  algebra. Let $\w$ be a fixed connection
form. Then in each $[\ga]\in \H_{loc}(\gb,\C)$ there exists a
unique representative $\ga'$ which is local and $\L$-invariant
(with respect to $\w$). Moreover, $\ga'=a\ga_{1,\omega}$, with
$a\in\C$.
\end{corollary}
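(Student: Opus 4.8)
The plan is to obtain \refC{unil} as an immediate consequence of \refT{main} combined with \refP{boundlinv}, mirroring exactly the argument already carried out for the bounded case in \refC{uni}. In essence there is nothing new to prove: the one-dimensionality and the identification of the generator are the hard content, and they are supplied by \refT{main}, while the rigidity needed for uniqueness is supplied by the coboundary-vanishing statement \refP{boundlinv}.

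For existence together with the explicit form, I would invoke \refT{main}~(a): the space $\H_{loc}(\gb,\C)$ is one-dimensional and generated by the class of $\ga_{1,\w,C_S}$. Thus any $[\ga]\in\H_{loc}(\gb,\C)$ satisfies $[\ga]=a\,[\ga_{1,\w,C_S}]$ for a uniquely determined scalar $a\in\C$, and I would take $\ga':=a\,\ga_{1,\w,C_S}$ as the candidate representative. Since $\ga_{1,\w,C_S}$ was already shown (for a separating cycle $C_S$ and with $\w=\w'$) to be both local and $\L$-invariant, with the invariance recorded in \refP{linv}~(b), the cocycle $\ga'$ is a local, $\L$-invariant representative of $[\ga]$ of precisely the asserted shape $\ga'=a\,\ga_{1,\omega}$.

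For uniqueness I would argue as in the proof of \refP{boundlinv}. Suppose $\ga'$ and $\ga''$ are two local and $\L$-invariant representatives of the same class. Their difference $\ga'-\ga''$ is then a coboundary; it is $\L$-invariant, being a difference of $\L$-invariant cocycles, and it is local. The decisive step is to observe that a local cocycle is in particular bounded from above, so \refP{boundlinv}~(a) applies to $\ga'-\ga''$ and forces it to vanish. Hence $\ga'=\ga''$, and the local $\L$-invariant representative in each class is unique.

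I do not expect a genuine obstacle here, since all the substantive work is already delivered by \refT{main} and \refP{boundlinv}. The single point that must be handled with care is the passage from \emph{local} to \emph{bounded from above}, which is exactly what licenses invoking the boundedness result \refP{boundlinv}~(a) on the difference of the two representatives; everything else is a direct reading off of the one-dimensionality statement and of the explicit generator.
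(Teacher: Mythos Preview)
Your proposal is correct and matches the paper's intended argument. The paper does not give a separate proof of \refC{unil}; it simply says ``As in the bounded case we obtain also for the local case the following corollary,'' i.e., one reruns the argument for \refC{uni} with \refT{main} in place of \refT{bounded}, exactly as you do, and the uniqueness step via \refP{boundlinv}(a) goes through because local cocycles are in particular bounded from above.
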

\section{Uniqueness of $\L$-invariant cocycles}
\label{S:induction}

\subsection{The induction step}
$ $

Recall from \refS{alm} the almost-graded structure 
of the Lax operator algebra $\gb$ and in
particular  the decomposition
$\laxg=\oplus_{n\in\Z}\laxg_n$ into subspaces of homogeneous
elements of  degree $n$. Also there the basis 
$\{L_{n,p}^u\mid u=1,\ldots,\dim\g, p=1,\ldots, N\}$
of the 
subspace $\laxg_n$ 
was introduced (see \refE{set}).

Let $\ga$ be an $\L$-invariant cocycle for the
algebra $\laxg$ which is bounded from above,
i.e. there exists an $R$ (independent of $n$ and $m$) such that
$\ga(\gb_n,\gb_m)\ne 0$ implies $n+m\le R$. Furthermore, we recall
that our connection $\omega$ needed to define the action of $\L$
on $\laxg$ is chosen to be holomorphic at the points in $I$.

For a pair $(L^u_{n,k},L^v_{m,t})$ of homogeneous elements  we call $n+m$
the {\it level} of the pair. We apply the technique  developed in
\cite{Scocyc}. We  will consider  cocycle values
$\ga(L^u_{n,k},L^v_{m,t})$ on pairs of level $l=n+m$ and will make
induction over the level. By the boundedness from above, the
cocycle values will vanish at all pairs of sufficiently high
level. It will turn out that everything will be fixed by the
values of the cocycle at level zero. Finally, we will show
that the cocycle  is a linear combination 
of the $N$ (resp. $2N$) basic cocycles as claimed in
\refT{bounded}.

For a cocycle $\ga$ evaluated for pairs of elements of level $l$
we will use the symbol $\equiv$ to denote that the expressions are
the same on both sides of an equation involving cocycle values
up to values of $\ga$  at
higher level. This has to be understood in the following strong
sense:
\begin{equation}
\sum \alpha^{(n,p,t)}_{(u,v)}\ga(L_{n,p}^u,L_{l-n,t}^v)\equiv 0,\qquad
 \alpha^{(n,p,t)}_{(u,v)}\in\C
\end{equation}
means a congruence modulo a linear combination of values of $\ga$
at pairs of basis elements of level $l'>l$. The coefficients of
that linear combination, as well as the  $\alpha^{(n,p,t)}_{(u,v)}$,
depend only on the structure of the  Lie algebra $\gb$ and do not
depend on $\ga$.

We will also use the same symbol $\equiv$ for equalities in  $\gb$
which are true modulo terms of higher degree compared to the terms
under consideration.

\medskip

By the $\L$-invariance we have
\begin{equation}
\ga(\nabla_{e_{k,r}}^{(\w)}L_{m,p}^u,L_{n,s}^v)
+ \ga(L_{m,p}^u,\nabla_{e_{k,r}}^{(\w)}L_{n,s}^v)=0.
\end{equation}
Using the almost-graded structure \refE{allstr} we obtain
(up to order $> (k+m+n)$) 
\begin{equation}\label{E:recform}
m\ga(L_{k+m,p}^u,L_{n,s}^{v})\,\delta_r^p+
n\ga(L_{m,p}^v,L_{n+k,s}^{v})\,\delta_r^s
\equiv 0,
\end{equation}
valid for all $n,m,k\in\Z$.

If in \refE{recform} all three indices $r,p$ and $s$ are different
then
the term on the left  hand side vanishes. 
If $r=p\ne s$ then we obtain
\begin{equation}
m\ga(L_{k+m,p}^u,L_{n,s}^{v})
\equiv 0.
\end{equation}
which is true for every $m$. Hence 
\begin{equation}\label{E:pds}
\ga(L_{l,p}^u,L_{n,s}^{v})
\equiv 0, \quad \text{for}\quad p\ne s\;.
\end{equation}
It remains $r=p=s$ and this yields
\begin{equation}\label{E:recforms}
m\ga(L_{k+m,s}^u,L_{n,s}^{v})+
n\ga(L_{m,s}^v,L_{n+k,s}^{v})
\equiv 0.
\end{equation}
\begin{proposition}\label{P:ln0}
Let $m+n\ne 0$ then at  level $m+n$ we have
\begin{equation}\label{E:ln0}
\ga(\laxg_m,\laxg_n)\equiv 0.
\end{equation}
\end{proposition}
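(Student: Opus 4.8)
The plan is to reduce the statement to the vanishing of $\ga$ on pairs of basis elements and then to exploit the relation \refE{recforms} that $\L$-invariance has already produced. Since $\ga$ is bilinear and $\laxg_m$, $\laxg_n$ are spanned by the basis elements $L^u_{m,p}$ and $L^v_{n,s}$ (with $u,v$ ranging over a basis of $\g$ and $p,s\in\{1,\ldots,N\}$), it suffices to show $\ga(L^u_{m,p},L^v_{n,s})\equiv 0$ for every such pair whenever $m+n\ne 0$. Throughout, $\equiv$ denotes congruence modulo cocycle values at strictly higher level, in the sense fixed above.

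First I would dispose of the off-diagonal case $p\ne s$: this is exactly \refE{pds}, which already gives $\ga(L^u_{m,p},L^v_{n,s})\equiv 0$. Hence all the content lies in the diagonal case $p=s$, i.e.\ in the values $\ga(L^u_{m,s},L^v_{n,s})$ where both elements are localized at the same point $P_s\in I$.

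For the diagonal case the key move is to specialize the free parameter $k$ in \refE{recforms} to $k=0$. Conceptually — and this is the heart of the matter — I would feed the degree-zero vector field $e_{0,s}$ into the $\L$-invariance identity: by \refE{allstr} the covariant derivative $\nabla^{(\w)}_{e_{0,s}}$ acts on $L^u_{m,s}$ as multiplication by the degree $m$ up to strictly higher degree, so $e_{0,s}$ plays the role of an Euler (grading) operator at $P_s$. Invariance of $\ga$ under this action, applied simultaneously in both slots, collapses the two cocycle values in \refE{recforms} onto the single pair $(L^u_{m,s},L^v_{n,s})$ and yields $(m+n)\,\ga(L^u_{m,s},L^v_{n,s})\equiv 0$. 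Since $m+n$ is a nonzero integer, hence invertible in $\C$, I can divide to obtain $\ga(L^u_{m,s},L^v_{n,s})\equiv 0$. Combining the two cases proves \refE{ln0}.

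The one point demanding care — the main, if modest, obstacle — is the bookkeeping of the congruence $\equiv$. I must check that setting $k=0$ keeps every error term genuinely at higher level: the correction in \refE{allstr} lies in $\gb_{(m+1)}$, so its pairing under $\ga$ with $L^v_{n,s}$ sits at level $\ge m+n+1$, strictly above the working level $m+n$, and is therefore legitimately absorbed into $\equiv$. Once this is verified no further estimates are needed; the argument is the almost-graded analogue of the classical grading-operator argument (as in Garland), now carried out only modulo higher level because the grading here is merely an almost-grading.
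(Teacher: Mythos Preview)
Your proof is correct and follows essentially the same route as the paper: dispose of the off-diagonal case $p\ne s$ via \refE{pds}, then set $k=0$ in \refE{recforms} to obtain $(m+n)\,\ga(L^u_{m,s},L^v_{n,s})\equiv 0$ and divide by the nonzero integer $m+n$. Your additional remarks on the Euler-operator interpretation of $e_{0,s}$ and the bookkeeping of the $\equiv$-congruence are fine elaborations but do not alter the underlying argument.
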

\begin{proof}
From \refE{pds} we conclude that only elements with
the same second index could contribute in level $m+n$.
We put $k=0$ in \refE{recforms} and obtain
\begin{equation}
(m+n)\ga(L_{m,s}^u,L_{n,s}^{v})
\equiv 0,\quad \forall u,v.
\end{equation}
Hence if $(m+n)\ne 0$ the claim follows.
\end{proof}
\begin{proposition}\label{P:zerodeg}
\begin{equation}\label{E:zerodeg}
\ga(\gb_m,\gb_0)\equiv 0,\qquad \forall m\in\Z.
\end{equation}
\end{proposition}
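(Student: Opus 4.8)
The plan is to separate the easy range $m\ne 0$ from the genuinely new case $m=0$, and to settle the latter by a single asymmetric specialization of the $\L$-invariance recursion already recorded in \refE{recforms}.

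For $m\ne 0$ there is nothing to do beyond citing \refP{ln0}: the pair $(\gb_m,\gb_0)$ sits at level $m\ne 0$, so $\ga(\gb_m,\gb_0)\equiv 0$ is exactly \refE{ln0}. Hence the whole weight of the statement is the case $m=0$, i.e. that two degree-zero homogeneous elements pair trivially modulo higher level. Here I would first use \refE{pds} to discard the mixed-index terms, since $\ga(L^u_{0,s},L^v_{0,t})\equiv 0$ already holds for $s\ne t$; it then remains to show $\ga(L^u_{0,s},L^v_{0,s})\equiv 0$ for a single common index $s$ and arbitrary $u,v$.

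For this I would apply $\L$-invariance not with a degree-zero field (which degenerates, see below) but with $e_{-1,s}$ on the pair $(L^u_{1,s},L^v_{0,s})$. By the fine structure \refE{allstr} of the $\L$-module, $\nw_{e_{-1,s}}L^u_{1,s}=L^u_{0,s}+(\text{higher order})$ carries the coefficient $1$, while $\nw_{e_{-1,s}}L^v_{0,s}$ has vanishing leading coefficient and so lies in higher filtration; pairing the latter against the degree-one element $L^u_{1,s}$ lands at level $\ge 1$ and is absorbed by $\equiv$. Thus $\L$-invariance collapses to $\ga(L^u_{0,s},L^v_{0,s})\equiv 0$, which is precisely the specialization $n=0$, $k=-1$ of \refE{recforms}: the second summand disappears because its coefficient $n$ is zero, while the surviving summand has the nonzero coefficient $1$.

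The one point demanding care---the main, if modest, obstacle---is the bookkeeping of which of the two terms of \refE{recforms} survives. One must arrange the shift $k$ and the degrees so that exactly one term reproduces the wanted pairing $\ga(\gb_0,\gb_0)$ with a nonzero coefficient while its partner is either killed by a zero coefficient or pushed to level $>0$. Using $e_{0,s}$ would be fatal: both leading coefficients would then vanish and the identity would collapse to $0\equiv 0$, yielding no information. It is exactly the asymmetry of $e_{-1,s}$ (raising the degree-one factor to degree zero with coefficient $1$, annihilating the degree-zero factor) that forces the conclusion. I note that, unlike the later steps pinning down the level-zero values, this proposition uses only $\L$-invariance and not the cocycle (Jacobi) identity; as a sanity check, in the classical graded model $\g\otimes\C[z,z^{-1}]$ the Kac-Moody cocycle indeed gives $\ga(Xz^0,Yz^0)=0\cdot\tr(XY)=0$, matching $\ga(\gb_0,\gb_0)=0$.
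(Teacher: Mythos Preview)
Your argument is correct and rests on the same specialization of \refE{recforms} as the paper: setting $m=1$ and $n=0$ kills the second summand and leaves $\ga(L^u_{k+1,s},L^v_{0,s})\equiv 0$. The paper simply leaves $k$ arbitrary here, which already covers all degrees at once, so your detour through \refP{ln0} for the nonzero-level cases is unnecessary but harmless.
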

\begin{proof}
We  evaluate \refE{recforms}
 for the values $m=1$ and $n=0$ and obtain the result.
\end{proof}
\begin{proposition}\label{P:zerobound}

\noindent (a) We have $\ga(\laxg_n,\laxg_m)=0$ if $n+m>0$, i.e.
the cocycle is bounded from above by zero.

\noindent (b) If  $\ga(\laxg_n,\laxg_{-n})=0$ then the cocycle
$\ga$ vanishes identically.
\end{proposition}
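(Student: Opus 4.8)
The plan is to bootstrap the congruences already obtained in \refP{ln0} and \refP{zerodeg} — which only assert vanishing modulo values at strictly higher level — into genuine equalities, using the boundedness from above as the anchor for a descending induction on the level. The essential structural fact I would exploit is that, by the definition of $\equiv$ together with the almost-grading constant $S$ of \refT{almgrad}, the congruence $\ga(\laxg_m,\laxg_n)\equiv 0$ expresses $\ga(\laxg_m,\laxg_n)$ as a \emph{finite} linear combination of cocycle values at pairs whose level lies in the range $(l,\,l+S]$, where $l=n+m$. Thus each level is tied only to finitely many strictly higher ones, which is exactly what makes a top-down induction well-founded.

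For part (a) I would argue by descending induction on the level $l=n+m>0$. Since $\ga$ is bounded from above by some $R$, every cocycle value at level $>R$ vanishes, and this furnishes the base of the induction. Assuming now that $\ga$ vanishes on all pairs of level $l'$ with $l'>l$, I apply \refP{ln0} (valid because $l\ne 0$): it gives $\ga(\laxg_m,\laxg_n)\equiv 0$, i.e.\ a combination of values at levels in $(l,\,l+S]$, all of which are themselves $>l>0$ and hence vanish by the inductive hypothesis. Therefore $\ga(\laxg_m,\laxg_n)=0$ exactly, which completes the step and shows that the upper bound may be taken to be zero. Note that the cross–second-index contributions are already absorbed into \refP{ln0}, which is stated at the level of the full subspaces, so no separate bookkeeping is needed there.

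For part (b) I would first invoke part (a) to kill all positive levels and the hypothesis $\ga(\laxg_n,\laxg_{-n})=0$ to kill level zero, leaving only the negative levels. These I treat by a second descending induction, running through $l=-1,-2,-3,\dots$ in that order. At each negative level the same congruence from \refP{ln0} (again valid since $l\ne 0$) writes $\ga(\laxg_m,\laxg_n)$ in terms of values at levels in $(l,\,l+S]$, every one of which is strictly greater than $l$; by the time level $l$ is reached, all such higher levels — positive, zero, or a negative level handled at an earlier step — are known to vanish, forcing $\ga(\laxg_m,\laxg_n)=0$. Since any fixed pair sits at a definite level and the chain back up to level zero has finite length $|l|$, this induction terminates for every pair and yields the vanishing of $\ga$ on all of $\laxg\times\laxg$.

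I do not expect a genuine obstacle here; the content is bookkeeping rather than a new idea, and the only points that deserve care are getting the \emph{direction} of both inductions right — both descend, anchored at the top by boundedness in (a) and by the already-established non-negative levels in (b), precisely because the defining congruence always relates a level to strictly higher ones — and making explicit that each congruence involves only finitely many higher-level terms, so that the recursion is finite at every step and no convergence issue can arise.
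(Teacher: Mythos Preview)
Your argument is correct and matches the paper's own proof in substance: both parts use \refP{ln0} together with the upper bound to run a descending induction on the level, first from the bound $R$ down to level $1$ for (a), and then through the negative levels for (b). The paper phrases (a) as a minimal-upper-bound contradiction rather than an explicit induction, but this is the same reasoning; your added remarks about the finiteness of the higher-level contributions (governed by $S$) make explicit what the paper leaves implicit.
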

\begin{proof}
The proof stays word by word the same as in \cite{SSlax}. But 
as it is one of the central arguments and 
for
the convenience of the reader we repeat the arguments.
If $\ga=0$ there is nothing to prove. Assume $\ga\ne 0$. As $\ga$
is bounded from above, there will be a minimal upper bound $l$,
such that above $l$ all cocycle values will vanish. Assume that
$l>0$, then by \refP{ln0} the values at level $l$ are
expressions of levels bigger than $l$. But the cocycle vanishes
there. Hence it vanishes at level $l$ too. This is a
contradiction which proves (a).

By induction, using again  \refP{ln0} we obtain that if the
cocycle vanishes at  level 0, it vanishes everywhere. This proves
(b).
\end{proof}
Combining Propositions \ref{P:zerodeg} and \ref{P:zerobound} we
obtain
\begin{corollary}\label{C:zerodeg}
\begin{equation}\label{E:zerodg}
\ga(\gb_m,\gb_0)= 0,\qquad \forall  m\ge 0.
\end{equation}
\end{corollary}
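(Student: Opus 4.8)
The plan is to split on the sign of $m$ and to use the two cited Propositions at their respective strengths: \refP{zerobound}(a) is a genuine vanishing statement above level zero, whereas \refP{zerodeg} only provides a congruence $\equiv$, that is, equality modulo cocycle values at strictly higher level. The whole content of the Corollary is that the higher-level correction terms permitted by that congruence are precisely the ones annihilated by \refP{zerobound}(a).

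First I would treat $m>0$. For $L\in\gb_m$ and $L'\in\gb_0$ the pair has level $m+0=m>0$, so \refP{zerobound}(a) applies verbatim and yields the honest equality $\ga(L,L')=0$; no congruence subtlety enters here.

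The remaining case is $m=0$, i.e. the level-zero pairing $\ga(\gb_0,\gb_0)$. Here \refP{zerodeg} gives $\ga(\gb_0,\gb_0)\equiv 0$. Unwinding the meaning of $\equiv$ fixed at the beginning of \refS{induction}, this asserts that every value $\ga(L,L')$ with $L,L'\in\gb_0$ equals a finite linear combination of cocycle values $\ga(L_{n,p}^u,L_{k,t}^v)$ at pairs whose level $n+k$ is strictly positive. By the case just handled---concretely by \refP{zerobound}(a)---each such higher-level value is zero, so the linear combination vanishes and we conclude $\ga(\gb_0,\gb_0)=0$, upgrading the congruence to strict equality.

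I do not expect a real obstacle; the only point requiring care is the bookkeeping of $\equiv$, namely checking that all correction terms hidden in the congruence of \refP{zerodeg} genuinely lie at level $>0$, so that \refP{zerobound}(a) is applicable to each summand. This is immediate from the convention recalled at the start of \refS{induction}, where $\equiv$ is taken modulo values at strictly higher level. Thus the Corollary is exactly the statement that the soft congruence of \refP{zerodeg} combines with the hard vanishing of \refP{zerobound}(a) to give vanishing on the nose.
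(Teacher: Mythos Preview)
Your argument is correct and is exactly the combination the paper indicates: \refP{zerobound}(a) kills all levels $>0$ outright, and for level zero \refP{zerodeg} gives $\ga(\gb_0,\gb_0)\equiv 0$ modulo values at strictly higher level, which then vanish by \refP{zerobound}(a). This is the same approach as the paper, just with the word ``combining'' unpacked.
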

\begin{proposition}
\begin{equation}\label{E:nm1}
\ga(L_{n,r}^u,L_{-n,s}^v)=n\cdot \ga(L_{1,r}^u,L_{-1,r}^v)\delta_r^s,
\end{equation}
\begin{equation}\label{E:p1m1}
\ga(L_{1,r}^u,L_{-1,s}^v)=\ga(L_{1,s}^v,L_{-1,r}^u)
\end{equation}
\end{proposition}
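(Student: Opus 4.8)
The plan is to derive both identities \refE{nm1} and \refE{p1m1} purely from the $\L$-invariance recursion \refE{recforms}, evaluated along the level-zero slice, where the congruence $\equiv$ becomes an exact equality. First I would dispose of the off-diagonal situation $r\ne s$. For \refE{nm1} the pair $(L_{n,r}^u,L_{-n,s}^v)$ has level $n+(-n)=0$, so \refE{pds} gives $\ga(L_{n,r}^u,L_{-n,s}^v)\equiv 0$, and since there are no nonzero cocycle values above level zero (by \refP{zerobound}(a)) the congruence upgrades to an honest vanishing; this produces the factor $\delta_r^s$. The same observation makes both sides of \refE{p1m1} vanish when $r\ne s$. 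Hence it suffices to treat the diagonal case, and from now on I fix a single point index $r$.

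For the diagonal case I would specialize \refE{recforms} (with the common second index taken to be $r$) to the level-zero slice by choosing $k=-m-n$, so that $k+m=-n$ and $n+k=-m$. The recursion then reads $m\,\ga(L_{-n,r}^u,L_{n,r}^v)+n\,\ga(L_{m,r}^u,L_{-m,r}^v)\equiv 0$, and because both surviving terms now sit exactly at level zero the higher-level corrections hidden in $\equiv$ vanish by \refP{zerobound}(a); the relation is therefore an exact equality. Writing $h^{u,v}(n):=\ga(L_{n,r}^u,L_{-n,r}^v)$ and using antisymmetry of the cocycle to rewrite $\ga(L_{-n,r}^u,L_{n,r}^v)=-h^{v,u}(n)$, I obtain the single clean relation $n\,h^{u,v}(m)=m\,h^{v,u}(n)$, valid for all integers $m,n$.

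From this relation both identities drop out by choosing small values. Setting $n=1$ gives $h^{u,v}(m)=m\,h^{v,u}(1)$; evaluating at $m=1$ forces $h^{u,v}(1)=h^{v,u}(1)$, which is precisely \refE{p1m1} in the diagonal case. Feeding this symmetry back in yields $h^{u,v}(m)=m\,h^{v,u}(1)=m\,h^{u,v}(1)$, i.e. the linear scaling $\ga(L_{n,r}^u,L_{-n,r}^v)=n\,\ga(L_{1,r}^u,L_{-1,r}^v)$, which together with the $\delta_r^s$ established above is exactly \refE{nm1}.

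The one genuinely delicate point — the step I expect to be the main obstacle — is the passage from the congruence $\equiv$ in \refE{recforms} to an exact equality: this is legitimate only because the pairs are evaluated at level exactly zero, so that \refP{zerobound}(a) annihilates all the higher-level correction terms tacitly carried along by $\equiv$. One must also keep careful track of which superscript occupies which argument when applying antisymmetry, as it is exactly this bookkeeping that generates the $u\leftrightarrow v$ interchange responsible for the symmetry \refE{p1m1}.
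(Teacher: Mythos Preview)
Your proof is correct and follows essentially the same route as the paper: both dispose of the off-diagonal case via \refE{pds} combined with \refP{zerobound}(a), and both extract the diagonal identities by specializing the $\L$-invariance recursion \refE{recforms} so that all surviving terms sit at level zero, where the congruence upgrades to an equality. The only cosmetic difference is the parametrization: the paper plugs in $m=1$, $n=-p$, $k=p-1$ to obtain \refE{nm1} directly and then reads off \refE{p1m1} by setting $n=-1$ and using antisymmetry, whereas you take the more symmetric choice $k=-m-n$, derive the functional relation $n\,h^{u,v}(m)=m\,h^{v,u}(n)$, and deduce first \refE{p1m1} and then \refE{nm1}.
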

\begin{proof}
Assume $s\ne r$ then all expressions are of positive level and vanishes
by  \refP{zerobound}, hence the statement is true.
For $r=s$ we take
in \refE{recforms}  the values $n=-p$, $m=1$ and $k=p-1$.
This yields the expression \refE{nm1} up to higher level terms.
But as the level is zero, the higher level terms vanish. 
Setting
$n=-1$ in \refE{nm1} we obtain \refE{p1m1}.
\end{proof}

Independently of
the structure of the Lie algebra $\g$, we obtained the following
results for every $\L$-invariant and bounded cocycle $\ga$:
\begin{enumerate}
\item
The cocycle is bounded from above by zero.
\item
The cocycle is uniquely given by its values at level zero.
\item
At level zero the cocycle is uniquely fixed by its values
$\ga(L_{1,s}^u,L_{-1,s}^v)$, for $u,v=1,\ldots,\dim\g$ and
$s=1,\dots ,N$.
\item
The other  cocycle values at level zero are given by
$\ga(L_{n,s}^u,L_{-n,r}^v)=0$ if $s\ne r$,
$\ga(L_{0,s}^u,L_{0,s}^v)=0$ and  $\ga(L_{n,s}^u,L_{-n,s}^v)$ given by
\refE{nm1}.
\end{enumerate}

Let $X\in\g$ then we denote as always by $X_{n,s}$, $s=1,\ldots, N$ 
the element in $\laxg$ with
leading term $Xz_s^n$ at $P_s$
and higher orders at the other points in $I$. 
We define for $s=1,\ldots, N$
the maps
\begin{equation}\label{E:cocar}
\psi_{\ga,s}:\g\times\g\to\C\qquad \psi_{\ga,s}(X,Y):=
\ga(X_{1,s},Y_{-1,s}).
\end{equation}
Obviously,
$\psi_{\ga,s}$ is a bilinear form on $\g$.
\begin{proposition}
(a) $\psi_{\ga,s}$ is symmetric, i.e.
$\psi_{\ga,s}(X,Y)=\psi_{\ga,s}(Y,X)$.
\newline
(b)  $\psi_{\ga,s}$ is invariant, i.e.
\begin{equation}
\psi_{\ga,s}([X,Y],Z)=\psi_{\ga,s}(X,[Y,Z]).
\end{equation}
\end{proposition}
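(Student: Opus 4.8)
The plan is to obtain both properties from the cocycle identity \refE{cohcoc}, combined with the fine structure of the almost-grading \refP{almprod} and the two vanishing facts already proved for an $\L$-invariant bounded cocycle: that $\ga$ is bounded from above by zero (\refP{zerobound}) and that $\ga(\gb_0,\gb_0)=0$ (the case $m=0$ of \refC{zerodeg}). The symmetry (a) costs nothing new. By bilinearity it suffices to test on a basis $X^u,X^v$ of $\g$, where $\psi_{\ga,s}(X^u,X^v)=\ga(L_{1,s}^u,L_{-1,s}^v)$. Reading \refE{p1m1} with both marked-point indices equal to $s$ gives exactly $\ga(L_{1,s}^u,L_{-1,s}^v)=\ga(L_{1,s}^v,L_{-1,s}^u)$, i.e. $\psi_{\ga,s}(X^u,X^v)=\psi_{\ga,s}(X^v,X^u)$, whence symmetry for all $X,Y$.

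For the invariance (b) the idea is to feed a suitably chosen triple into \refE{cohcoc} so that the three cyclic pairings all land at level zero, where the cocycle is controlled. I would evaluate the cocycle identity on the triple $(X_{1,s},Y_{0,s},Z_{-1,s})$, with the degree-$0$ element $Y_{0,s}$ placed in the middle slot; the three brackets then carry degrees $1$, $-1$ and $0$ at $P_s$. Applying \refP{almprod} I record
\[
[X_{1,s},Y_{0,s}]=[X,Y]_{1,s}+L_1,\quad
[Y_{0,s},Z_{-1,s}]=[Y,Z]_{-1,s}+L_2,\quad
[Z_{-1,s},X_{1,s}]=[Z,X]_{0,s}+L_3,
\]
with $L_1\in\gb_{(2)}$, $L_2\in\gb_{(0)}$ and $L_3\in\gb_{(1)}$.

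The remaining step is to evaluate the three cocycle terms and discard the corrections $L_1,L_2,L_3$. Pairing the components of $L_1$ against $Z_{-1,s}$, and those of $L_3$ against $Y_{0,s}$, gives only values at level $\ge 1$, which vanish by \refP{zerobound}; pairing $L_2$ against $X_{1,s}$ likewise yields level $\ge 1$ and vanishes. Hence the first term equals $\ga([X,Y]_{1,s},Z_{-1,s})=\psi_{\ga,s}([X,Y],Z)$; the second equals $\ga([Y,Z]_{-1,s},X_{1,s})=-\psi_{\ga,s}(X,[Y,Z])$ by antisymmetry; and the third is $\ga([Z,X]_{0,s},Y_{0,s})$, a pairing of two degree-$0$ elements, hence zero by \refC{zerodeg}. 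The cocycle identity then collapses to $\psi_{\ga,s}([X,Y],Z)-\psi_{\ga,s}(X,[Y,Z])=0$, which is the claim.

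The single point demanding genuine care — and the only real obstacle — is verifying that the higher-order corrections $L_i$ never contribute. This is precisely where boundedness from above \emph{by zero} is indispensable: without it the level-$\ge 1$ pairings need not vanish and the telescoping breaks. It is also the reason the middle element is taken in degree $0$; a naive choice such as $(X_{1,s},Y_{-1,s},\,\cdot\,)$ would not place all three cyclic brackets at a controllable level, whereas the present choice produces one genuine level-zero pairing per term together with corrections safely pushed above level zero.
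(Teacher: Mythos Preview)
Your proof is correct and follows essentially the same route as the paper: part (a) via \refE{p1m1}, and part (b) via the cocycle identity applied to the triple $(X_{1,s},Y_{0,s},Z_{-1,s})$, with the higher-order corrections killed by \refP{zerobound} and the degree-zero pairing killed by \refC{zerodeg}. The only cosmetic difference is that the paper starts from $\psi_{\ga,s}([X,Y],Z)=\ga([X,Y]_{1,s},Z_{-1,s})$ and replaces $[X,Y]_{1,s}$ by $[X_{1,s},Y_{0,s}]$ before invoking the cocycle condition, while you write out all three cyclic terms at once and track each correction $L_i$ separately; the content is identical.
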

\begin{proof}
First we have by \refE{p1m1}
\begin{equation*}
\psi_{\ga,s}(X,Y)=\ga(X_{1,s},Y_{-1,s})= \ga(Y_{1,s},X_{-1,s})
=\psi_{\ga}(Y,X).
\end{equation*}
This is the symmetry. Furthermore, using
$[X_{1,s},Y_{0,s}]\equiv{[X,Y]}_{1,s}$, 
the fact that the
cocycle vanishes for positive level, and by the cocycle condition
we obtain
\begin{multline*}
\psi_{\ga,s}([X,Y],Z)=\ga({[X,Y]}_{1,s},Z_{-1,s})=
\ga([X_{1,s},Y_{0,s}],Z_{-1,s})=
\\
-\ga([Y_{0,s},Z_{-1,s}],X_{1,s}) -\ga([Z_{-1,s},X_{1,s}],
Y_{0,s}).
\end{multline*}
The last term vanishes due to \refC{zerodeg}. Hence
\begin{equation*}
\psi_{\ga,s}([X,Y],Z)=\ga(X_{1,s},[Y_{0,s},Z_{-1,s}])=
\ga(X_{1,s},{[Y,Z]}_{-1,s})= \psi_{\ga,s}(X,[Y,Z]).
\end{equation*}
\end{proof}

As the cocycle $\ga$ is fixed by the values
$\ga(L_{1,s}^u,L_{-1,s}^v)$, $s=1,\ldots,N$ 
 and they are fixed by the bilinear maps
$\psi_{\ga,s}$ we proved:
\begin{theorem}\label{T:fixing}
Let $\gamma$ be an $\L$-invariant cocycle for $\laxg$ which is
bounded from above.
Then $\ga$ it is bounded from above 
 by zero and  is completely fixed by the
associated symmetric and invariant bilinear forms $\psi_{\ga,s}$,
$s=1,\ldots, N$ on
$\g$ defined via \refE{cocar}.
\end{theorem}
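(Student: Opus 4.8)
The plan is to assemble this statement directly from the structural facts already proved for an $\L$-invariant, bounded-from-above cocycle $\ga$, organizing them into three successive reductions: first that the cocycle is bounded above by zero, then that it is determined by its values at level zero, and finally that the level-zero data is encoded in the bilinear forms $\psi_{\ga,s}$.

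First I would record that $\ga$ is bounded from above by zero, which is exactly \refP{zerobound}(a): if $\ga$ vanishes above some minimal level $l$, then by \refP{ln0} each value at a level strictly between $0$ and $l$ is congruent to a combination of values at strictly higher level, which already vanish, forcing $l \le 0$. Next I would show that $\ga$ is completely determined by its restriction to level zero, i.e.\ by the values $\ga(\laxg_n, \laxg_{-n})$. The cleanest route is to apply \refP{zerobound}(b) to the difference of two $\L$-invariant bounded cocycles sharing the same level-zero values: such a difference is again an $\L$-invariant bounded cocycle vanishing at level zero, hence vanishes identically, so the two original cocycles coincide. Equivalently, the recursion \refE{recforms} coming from $\L$-invariance expresses each negative-level value in terms of higher-level ones, and a downward induction anchored by the vanishing above level zero propagates the level-zero data to every level.

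Then I would pin down the level-zero values themselves using the three ingredients already in place. By \refE{pds} every mixed pairing $\ga(L_{n,p}^u, L_{-n,s}^v)$ with $p \ne s$ vanishes; by \refC{zerodeg} the degree-zero pairings $\ga(\gb_0, \gb_0)$ vanish; and by \refE{nm1} the surviving same-point values satisfy $\ga(L_{n,s}^u, L_{-n,s}^v) = n\,\ga(L_{1,s}^u, L_{-1,s}^v)$. Consequently every level-zero value is an integer multiple of some $\ga(L_{1,s}^u, L_{-1,s}^v)$, which by the definition \refE{cocar} equals $\psi_{\ga,s}(X^u, X^v)$. Since the $X^u$ form a basis of $\g$ and each $\psi_{\ga,s}$ is bilinear---indeed symmetric and invariant by the preceding proposition---the finitely many forms $\psi_{\ga,1}, \ldots, \psi_{\ga,N}$ carry all the information contained in $\ga$, which is the assertion of the theorem.

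The main obstacle, granting the earlier propositions, is not any single computation but making sure the middle step is a genuine \emph{determination} rather than a mere list of congruences: one must verify that the downward induction from the top nonvanishing level actually terminates and that \refP{zerobound}(b) (or the explicit formula \refE{nm1}) solves for each value instead of only constraining it modulo higher levels. Boundedness from above is precisely the hypothesis that makes this induction well-founded, so the whole argument hinges on invoking it at the right moment.
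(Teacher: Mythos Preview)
Your proposal is correct and follows essentially the same route as the paper: the theorem is stated as a summary of the preceding chain of propositions, and the paper's proof amounts to collecting \refP{zerobound}, \refE{pds}, \refC{zerodeg}, and \refE{nm1} into the enumerated list just before the theorem. Your additional remark that one may phrase the determination step via the difference of two cocycles (applying \refP{zerobound}(b)) is a clean variant of the same idea and not a genuine departure.
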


\subsection{Simple Lie algebras $\g$}
$ $

By \refT{fixing} the $\L$-invariant 
cocycle $\ga$ is completely given by 
fixing the $N$-tuple $(\psi_{\ga,1},\psi_{\ga,2},\ldots,\psi_{\ga,N})$
of symmetric invariant bilinear forms $\psi_{\ga,s}$.
For a finite-dimensional simple Lie algebra every such form is a
multiple of the Cartan-Killing form $\kappa$. 
Hence the space of bounded cocycles is at most $N$-dimensional.
Our geometric cocycles
$\gamma_{1,\omega,C_i}$, see \refE{ga1}, for $i=1,\ldots,N$ 
are $\L$-invariant and
bounded  cocycles.
They are linearly independent, see \refP{linind}.
 Hence, we obtain that every bounded and
$\L$-invariant cocycle is a 
linear combination of the 
$\gamma_{1,\omega,C_i}$. 
Moreover,  they are a basis of the space of $\L$-invariant
and bounded cocycles.
By \refP{nonbound} they stay linearly independent 
after passing to cohomology 
and we obtain
\begin{proposition}\label{P:lsimp}
Let $\g$ be simple, then
\begin{equation}
\dim\H_{b,\L}(\gb,\C)=N,
\end{equation}
and this cohomology space is generated by the classes of
$\gamma_{1,\omega,C_i}$, $i=1,\ldots, N$. 
\end{proposition}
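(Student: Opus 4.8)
The plan is to combine the reduction already achieved in \refT{fixing} with the representation-theoretic rigidity of simple Lie algebras, postponing the passage from cocycles to cohomology classes to the very end. First I would let $\ga$ be an arbitrary $\L$-invariant cocycle on $\gb$ that is bounded from above. By \refT{fixing} such a $\ga$ is in fact bounded by zero and is completely determined by the $N$-tuple of symmetric invariant bilinear forms $\psi_{\ga,s}$ on $\g$, $s=1,\ldots,N$, defined in \refE{cocar}. Thus the assignment $\ga\mapsto(\psi_{\ga,1},\ldots,\psi_{\ga,N})$ is an injective linear map from the space $Z$ of bounded $\L$-invariant cocycles into the $N$-fold product of the space of symmetric invariant bilinear forms on $\g$.

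Next I would invoke the standard structure fact that for a finite-dimensional simple Lie algebra $\g$ every symmetric invariant bilinear form is a scalar multiple of the Cartan--Killing form $\kappa$. Hence the target of the above injection is $N$-dimensional, so $\dim Z\le N$. To obtain the reverse inequality I would exhibit $N$ explicit elements of $Z$, namely the geometric cocycles $\ga_{1,\omega,C_i}$ of \refE{ga1}: these were shown to be $\L$-invariant and bounded from above for the circles $C_i$, so they lie in $Z$, and by \refP{linind} they are linearly independent. Consequently $Z$ is exactly $N$-dimensional with basis $\{\ga_{1,\omega,C_i}\}_{i=1}^N$; in particular every bounded $\L$-invariant cocycle is a linear combination of the $\ga_{1,\omega,C_i}$.

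Finally I would pass to cohomology. By definition $\H_{b,\L}(\gb,\C)$ is precisely the image of $Z$ under the canonical projection of $2$-cocycles onto $\H^2(\gb,\C)$, so it is spanned by the $N$ classes $[\ga_{1,\omega,C_i}]$ and is therefore at most $N$-dimensional. The only place where something beyond structure theory and \refT{fixing} is needed, and the step I would flag as the main obstacle, is to show that these classes remain linearly independent after quotienting by coboundaries, equivalently that no nontrivial combination of the $\ga_{1,\omega,C_i}$ is a coboundary. This is exactly the content of \refP{nonbound}. Here I would be careful to invoke \refP{nonbound} directly rather than \refP{boundlinv}, since the proof of the latter runs through \refT{bounded} and hence through the present proposition, which would introduce a circularity. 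Combining the upper bound $\dim\H_{b,\L}(\gb,\C)\le N$ with the linear independence of the $N$ classes yields $\dim\H_{b,\L}(\gb,\C)=N$ with the $[\ga_{1,\omega,C_i}]$ forming a basis, as claimed.
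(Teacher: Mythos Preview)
Your proof is correct and follows essentially the same route as the paper: reduce via \refT{fixing} to the $N$-tuple of symmetric invariant bilinear forms on $\g$, use simplicity to bound $\dim Z\le N$, realize $N$ linearly independent bounded $\L$-invariant cocycles $\ga_{1,\omega,C_i}$ via \refP{linind}, and then pass to cohomology using \refP{nonbound}. Your explicit remark about invoking \refP{nonbound} rather than \refP{boundlinv} to avoid circularity is a nice clarification that the paper leaves implicit.
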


 \subsection{The case of $\laxgl(n)$}
$ $

We have the direct decomposition, as Lie algebras,
$\bgl=\laxs(n)\oplus\laxsl(n)$. Let $\ga$ be a cocycle of
$\laxgl(n)$ and denote by $\ga'$ and $\ga''$ its restriction to
$\laxs(n)$ and  $\laxsl(n)$ respectively.
As in \cite{SSlax} we obtain using   \refL{weak}
\begin{proposition}
\begin{equation}
\ga(x,y)=0,\quad \forall x\in \laxs(n),\ y\in \laxsl(n).
\end{equation}
\end{proposition}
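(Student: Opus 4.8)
The plan is to use two structural facts. First, the scalar part $\laxs(n)$ is \emph{central} in $\laxgl(n)$: a scalar matrix commutes pointwise with every matrix, so $[\laxs(n),\laxgl(n)]=0$. Second, $\laxsl(n)$ is itself the Lax operator algebra attached to the simple Lie algebra $\sln(n)$ (for $n\ge 2$; for $n=1$ the statement is vacuous), so \refL{weak} applies verbatim inside it, with all commutators and the remainder staying in $\laxsl(n)$ and its induced almost-grading.

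First I would evaluate the cocycle identity \refE{cohcoc} on a triple $(a,b,x)$ with $a,b\in\laxsl(n)$ and $x\in\laxs(n)$. Centrality gives $[b,x]=[x,a]=0$, so the three-term identity collapses to $\ga([a,b],x)=0$. Hence $\ga$ already vanishes on every pair whose first entry is a commutator of elements of $\laxsl(n)$ and whose second entry lies in $\laxs(n)$.

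Next, fix $y\in\laxsl(n)$ and $x\in\laxs(n)$, and let $m\in\Z$ be arbitrary. By \refL{weak} we may write $y=\sum_{s=1}^l[y^{(s,1)},y^{(s,2)}]+y_m$ with $y^{(s,1)},y^{(s,2)}\in\laxsl(n)$ and $y_m$ of degree $\ge m$. Bilinearity together with the previous step yields $\ga(y,x)=\ga(y_m,x)$. It remains to kill this remainder by boundedness from above. Decompose the fixed element $x$ into its finitely many homogeneous components and let $d$ be the smallest degree occurring. Every homogeneous component of $y_m$ has degree $\ge m$, so each pairing contributing to $\ga(y_m,x)$ sits at level $\ge m+d$. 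Since $\ga$ is bounded from above (by \refP{zerobound}, in fact by zero), choosing $m$ with $m+d>0$ forces $\ga(y_m,x)=0$, and therefore $\ga(y,x)=0$; antisymmetry then gives the stated vanishing of $\ga(x,y)$.

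The argument is short, and the one point that requires care — the genuine obstacle — is that \refL{weak} represents $y$ as a sum of commutators only \emph{up to} a tail of arbitrarily high degree, not exactly. The proof goes through precisely because boundedness from above lets this tail be pushed beyond the level at which $\ga$ can be nonzero; without a boundedness hypothesis the conclusion would not follow from \refL{weak} alone. One should also confirm, as noted above, that \refL{weak} and the degree bookkeeping are carried out inside $\laxsl(n)$ with the grading it inherits from $\laxgl(n)$, so that all the elements produced remain in $\laxsl(n)$.
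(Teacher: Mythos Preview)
Your proof is correct and is essentially the approach the paper has in mind: the paper itself defers to \cite{SSlax} and signals only ``using \refL{weak}'', and your argument (centrality of $\laxs(n)$, the cocycle identity giving $\ga([a,b],x)=0$, then \refL{weak} plus boundedness to dispose of the tail) is exactly how that lemma is meant to be deployed here. One small remark: you need only boundedness from above, which is the standing hypothesis throughout Section~\ref{S:induction}; invoking \refP{zerobound} for the sharp bound $0$ is fine in context (the cocycle there is also $\L$-invariant) but is more than necessary --- any finite upper bound $R$ suffices, and you can simply take $m > R - d$.
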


Hence we can decompose the cocycle as 
$\ga=\ga'\oplus\ga''$. If $\ga$ is bounded/local and/or $\L$-invariant the
same is true for $\ga'$ and $\ga''$.

First we consider the algebra $\laxs(n)$. It is isomorphic to
$\A$, the isomorphism is given by
\begin{equation}
L\mapsto  \frac 1n\;\tr(L).
\end{equation}
In \cite[Thm. 5.7]{Scocyc} it was shown that 
the space of $\L$-invariant 
cocycles for $\A$ bounded from above is $N$-dimensional
and a basis is given by 
\begin{equation}
\gamma_i(f,g)=\cinc{C_i}fdg=\res_{P_i}(fdg),\quad \i=1,\ldots,N.
\end{equation}
Note that as $\A$ is abelian there do not exist non-trivial
coboundaries.
We obtain
\begin{equation}
\ga'(L,M)=\sum_{i=1}^N \alpha_i
\res_{P_i}(\tr(L)\cdot \tr(dM)) =
\sum_{i=1}^N\a_i\ga_{2,C_i}(L,M),
\end{equation}
by Definition \refE{g2}.

For the cocycle $\ga''$ of $\laxsl(n)$ we use \refP{lsimp} and
obtain $\ga''=
\sum_{i=1}^N\beta_i\gamma_{1,\omega,C_i}$. Altogether we showed
\begin{proposition}\label{P:lcom}
\begin{equation}
\dim\H_{b,\L}(\laxgl(n),\C)=2N.
\end{equation}
A basis is given by the classes of $\gamma_{1,\omega,C_i}$ and
$\gamma_{2,C_i}$, $i=1,\ldots, N$. 
\end{proposition}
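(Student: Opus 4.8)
The plan is to reduce the computation to the Lie-algebra direct sum $\bgl=\laxs(n)\oplus\laxsl(n)$, whose two summands have already been understood, and then to assemble the answer. First I would invoke the preceding vanishing statement $\ga(x,y)=0$ for $x\in\laxs(n)$ and $y\in\laxsl(n)$: it shows that any cocycle $\ga$ of $\bgl$ decomposes as $\ga=\ga'\oplus\ga''$, where $\ga'$ and $\ga''$ are its restrictions to $\laxs(n)$ and $\laxsl(n)$. Since both summands are graded subspaces that are stable under the $\L$-action, boundedness and $\L$-invariance of $\ga$ are inherited by $\ga'$ and $\ga''$, and conversely any pair of bounded $\L$-invariant cocycles on the two ideals glues back to one on $\bgl$.

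For the scalar ideal I would use the isomorphism $\laxs(n)\cong\A$ given by $L\mapsto\frac1n\tr(L)$ together with \cite[Thm.~5.7]{Scocyc}, which says that the bounded $\L$-invariant cocycles of $\A$ form an $N$-dimensional space spanned by the residue cocycles $(f,g)\mapsto\res_{P_i}(f\,dg)$. Transporting these back and comparing with \refE{g2} identifies $\ga'$ with a combination $\sum_{i=1}^N\alpha_i\,\gamma_{2,C_i}$. For the traceless ideal I would apply \refP{lsimp} verbatim, since $\laxsl(n)$ is the Lax operator algebra of the simple Lie algebra $\sln(n)$; this gives $\ga''=\sum_{i=1}^N\beta_i\,\gamma_{1,\omega,C_i}$. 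Combining the two, and noting that $\gamma_{2,C}$ vanishes on $\laxsl(n)$ while $\gamma_{1,\omega,C}$ and $\gamma_{2,C}$ are proportional on the abelian $\laxs(n)$, one sees that every bounded $\L$-invariant cocycle of $\bgl$ is a linear combination of the $2N$ cocycles $\gamma_{1,\omega,C_i}$ and $\gamma_{2,C_i}$, so the space is at most $2N$-dimensional.

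It then remains to prove cohomological independence of these $2N$ classes. Here I would suppose $\sum_i\alpha_i\gamma_{2,C_i}+\sum_i\beta_i\gamma_{1,\omega,C_i}$ is a coboundary $\phi([\cdot,\cdot])$ and restrict it to the two ideals in the right order. Restricting to $\laxsl(n)$, where every $\gamma_{2,C_i}$ vanishes, exhibits $\sum_i\beta_i\gamma_{1,\omega,C_i}$ as a coboundary of the simple algebra $\laxsl(n)$, whence all $\beta_i=0$ by \refP{nonbound}. The leftover $\sum_i\alpha_i\gamma_{2,C_i}$ is then a coboundary whose restriction to the abelian ideal $\laxs(n)$ must vanish identically; since the corresponding residue cocycles are independent on $\A$, all $\alpha_i=0$. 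This gives $\dim\H_{b,\L}(\bgl,\C)=2N$ with the stated basis (the cocycle-level independence is already recorded in \refP{linind}).

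The substantive difficulty is not in this assembly but is already discharged upstream --- in \refP{lsimp}, which rests on the full induction of \refS{induction}, and in \cite[Thm.~5.7]{Scocyc}. The only point specific to $\gl(n)$ is the coincidence up to scale of the type (1) and type (2) cocycles on the scalar part, which forces the order of the two restriction arguments above; the vanishing of the cross terms handles the rest, so I expect this step to be routine rather than an obstacle.
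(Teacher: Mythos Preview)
Your proof is correct and follows essentially the same route as the paper: split $\ga$ along $\bgl=\laxs(n)\oplus\laxsl(n)$, invoke \cite[Thm.~5.7]{Scocyc} on the scalar part and \refP{lsimp} on the traceless part, then assemble. You are in fact slightly more explicit than the paper about two points it leaves implicit --- the proportionality of $\gamma_{1,\omega,C_i}$ and $\gamma_{2,C_i}$ on the scalar ideal, and the order-of-restriction argument for cohomological independence --- but these are exactly the details the paper treats as routine.
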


\medskip
In this section we showed those parts of  \refT{bounded}
which deal with $\L$-invariant cocycles. 
In fact we showed the complete theorem  under the additional
assumption that our cohomology classes are  $\L$-invariant.
For the scalar part this is the best what could be expected.
Without $\L$-invariance there will be much more non-trivial
cohomology classes for the scalar algebra, see \cite{Scocyc} for
more information.
In the next section we will present a way how to get rid of this
condition for the simple Lie algebras.
\section{The simple case in general}
\label{S:direct}
In this section the 
Lax operator algebra $\gb$ is always based on a finite
simple classical Lie algebra.
As explained in the previous section if we put 
$\L$-invariance in the assumption then \refT{bounded} would 
have been 
proved. One way to complete the general proof is to 
 to show  that after
cohomological changes every bounded cocycle has also a bounded
$\L$-invariant representing  it.
In fact, we will do this. But unfortunately, we do not have 
a direct proof. Instead, 
by a quite different approach we will show  that
for the  simple Lie algebra case 
the space of bounded  cohomology classes (of the Lax operator
algebras)
is at most $N$-dimensional without assuming  $\L$-invariance a
priori. Combining this result with the result of the last section
that the space of
$\L$-invariant bounded cohomology classes is $N$-dimensional we see
that in the simple case each bounded cohomology class is
automatically $\L$-invariant. Moreover, we showed there that it
has a unique $\L$-invariant representing cocycle which is given as
linear combination  of $\ga_{1,\w,C_i}$, $i=1,\ldots,N$.

The theorem we are heading for is
\begin{theorem}\label{T:bounduni}
Let $\g$ be a simple classical Lie algebra over $\C$ and $\gb$ the
associated Lax operator algebra with its almost-grading. Every
bounded cocycle on  $\gb$ is cohomologous 
to a  distinguished  cocycle which is bounded from above by zero. 
The space of distinguished cocycles is at most $N$-dimensional.
\end{theorem}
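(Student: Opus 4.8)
The plan is to prove both assertions by passing to the associated graded Lie algebra of $\gb$ with respect to the filtration $\gb_{(m)}$. By \refP{almprod} the induced bracket on the graded object is $[X_{k,s},Y_{m,p}]=[X,Y]_{k+m,s}\delta_s^p$, so $\operatorname{gr}\gb$ is canonically a direct sum of $N$ mutually commuting copies of the classical current (loop) algebra $\g\otimes\C[t,t^{-1}]$, one copy attached to each point $P_s\in I$, with $t^k$ playing the role of degree $k$. For $\g$ simple each copy is perfect and, by Garland \cite{Gar}, its second cohomology with values in $\C$ is one-dimensional and concentrated in degree zero. These two structural facts drive the whole argument, and they replace the $\L$-invariance used in \refS{induction}.

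First I would establish the reduction to distinguished cocycles. Let $\ga$ be bounded from above by some $R$; if $R\le 0$ there is nothing to do. For $R>0$ the top-level part of $\ga$ is well defined on $\operatorname{gr}\gb$, since the higher-order corrections in \refP{almprod} land in levels $>R$ and are annihilated by boundedness, and the top-level part of the cocycle identity shows it is a homogeneous cocycle of degree $R$ on $\operatorname{gr}\gb$. Feeding two elements of one copy and one of another into the cocycle identity, and using that the copies commute and each loop algebra is perfect, forces the blocks pairing distinct copies to vanish. On each diagonal copy the degree-$R$ part of $H^2(\g\otimes\C[t,t^{-1}],\C)$ vanishes because $R\neq 0$, so the top-level cocycle is a coboundary $\delta\bar\phi$ of a degree-$R$ potential. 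Lifting $\bar\phi$ to a linear form $\phi$ on $\gb$ supported in $\gb_R$ and replacing $\ga$ by $\ga-\delta\phi$ lowers the upper bound by one; iterating finitely often yields a cohomologous cocycle bounded from above by zero, the desired distinguished representative.

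Second, to bound the space of distinguished cocycles by $N$, I would run a recursion in the spirit of \cite{Scocyc} and \cite{Saff}, but driven by the cocycle identity together with the root-space decomposition $\g=\car\oplus\bigoplus_\al\g_\al$. For $X\in\g_\al$, $Y\in\g_\be$ and a Cartan element $h$, inserting $h_{0,s}$, $X_{n,s}$, $Y_{m,s}$ into the cocycle identity and keeping leading terms yields, modulo higher level, $(\al+\be)(h)\,\ga(X_{n,s},Y_{m,s})\equiv-\ga([X,Y]_{n+m,s},h_{0,s})$, together with analogous relations across different points and roots. For $n+m>0$ the right-hand side vanishes by distinguishedness, so choosing $h$ with $(\al+\be)(h)\neq0$ kills all values attached to mismatched points or to root pairs of nonzero weight sum and reduces every value to one at level zero; a further application pins the level-zero values down to the $N$ bilinear forms $\psi_s(X,Y)=\ga(X_{1,s},Y_{-1,s})$. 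Exactly as in \refT{fixing}, the cocycle identity forces each $\psi_s$ to be symmetric and $\g$-invariant, hence, for $\g$ simple, a scalar multiple of the Cartan--Killing form $\kappa$. Thus a distinguished cocycle is determined by $N$ scalars and the space has dimension at most $N$.

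The main obstacle is the third step: without $\L$-invariance the clean recursion \refE{recform} is unavailable, and the identical control must be extracted purely from the cocycle identity and the internal root structure, all while tracking the weak-singularity corrections that separate $\gb$ from an honest sum of loop algebras. Here \refL{weak} takes over the role of perfectness, letting one rewrite arbitrary elements as sums of commutators modulo higher order and thereby close the recursion; I expect verifying that these corrections never obstruct the reductions to be the most delicate part of the argument.
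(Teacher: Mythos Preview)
Your first step---reducing a bounded cocycle to one bounded from above by zero by peeling off the top level on the associated graded, using that $\operatorname{gr}\gb\cong\bigoplus_{s=1}^N\g\otimes\C[t,t^{-1}]$ and that the degree-$R$ part of $\H^2$ of each loop summand vanishes for $R\ne 0$---is correct and is a clean alternative to the paper's explicit construction of the potential $\Phi$ in \refE{bel}. Both arguments land on a cocycle bounded by zero; yours does so abstractly via Garland, the paper does so by writing down $\Phi$ in terms of Chevalley generators.

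The gap is in your second step, and it stems from identifying ``distinguished'' with ``bounded from above by zero''. That space of cocycles is \emph{not} at most $N$-dimensional: for any $k\le 0$ and any nonzero linear form $\phi$ supported on $\gb_k$, the coboundary $\delta\phi$ is bounded by zero (since $[\gb_n,\gb_m]\subset\gb_{(n+m)}$ has no $\gb_k$-component once $n+m>0$), and these coboundaries are linearly independent as $k$ varies. For such a $\delta\phi$ with $k<0$ one even has $\psi_s(X,Y)=\delta\phi(X_{1,s},Y_{-1,s})=0$ for all $s$, yet $\delta\phi\ne 0$; so the forms $\psi_s$ do \emph{not} determine a cocycle that is merely bounded by zero. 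Your recursion via the cocycle identity with $h_{0,s}$ only relates values at a given level to other values at the \emph{same} level (plus higher), in particular to values of the shape $\ga(Z_{l,s},h_{0,s})$; it never moves you from level $l<0$ up to level $0$ the way the $\L$-invariance relation \refE{recform} does, and your appeal to \refT{fixing} is circular because that theorem assumes $\L$-invariance.

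What the paper actually does is take ``distinguished'' to mean \emph{normalized} in the sense of \refD{norc}: after a further, carefully chosen coboundary one forces $\ga(H_{0,s}^\a,E_{n,s}^{\pm\a})=0$ and $\ga(E_{0,s}^{\a_i},E_{n,s}^{-\a_i})=0$ for all $n$, see \refP{p1}. These are precisely the ``one argument of degree zero'' values that your own recursion produces on the right-hand side and cannot otherwise eliminate. With them killed, the purely cocycle-identity recursion of \refP{summary} \emph{does} close, level by level, and pins everything to the $N$ numbers $\ga(H_{1,s}^{\a_1},H_{-1,s}^{\a_1})$. So the missing ingredient is not a sharper recursion but an additional normalization beyond ``bounded by zero''; once you insert that, your root-space argument goes through and matches the paper's.
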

\begin{remark}
What we will show is  the following. 
Every cocycle  bounded from above is
cohomologous to a cocycle which is fixed by its value at  $N$
special pair of elements in $\gb$ (namely by
$\ga(H^\a_{1,s},H^{\a}_{-1,s})$ for one fixed simple root $\a$, see
below for the notation). Besides the structure of
$\g$ we only use the almost-gradedness of $\gb$ with leading terms
given in \refE{chevlax}.
\end{remark}

The presentation is quite similar to \cite{SSlax}. Those proofs
which are completely of the same structure will not be repeated
here.

First we need  to recall some facts about the Chevalley
generators of $\g$. Choose a root space decomposition
$\g=\fh\oplus_{\alpha\in\Delta}\g^\alpha$. As usual $\Delta$
denotes the set of all roots $\alpha\in\fh^*$. Furthermore, let
$\{\alpha_1,\alpha_2,\ldots,\alpha_p\}$ be a set of simple roots
($p=\dim\fh$). With respect to this basis, the root system splits
into positive and negative roots, $\Delta_+$ and $\Delta_-$
respectively. With $\a$  a positive root, $-\a$ is a negative root
and vice versa. For $\a\in\Delta$ we have $\dim\g^\alpha=1$.
Certain elements $E^\a\in\g^\a$ and $H^\a\in \fh$, $\a\in\Delta$
can be fixed so that for every positive root $\a$
\begin{equation}\label{E:chev}
[E^\a,E^{-\a}]=H^\a,\qquad [H^\a,E^{\a}]=2E^{\a},\qquad
[H^\a,E^{-\a}]=-2E^{-\a}.
\end{equation}
We use also $H^i:=H^{\a_i}$, $i=1,\ldots,p$ for the elements
assigned to the simple roots. A vector space basis, the Chevalley
basis, of $\g$ is given by $\{E^\a, \alpha\in\Delta;\  H^i, 1\le
i\le p\}$.

We denote by $(\,\, ,\,\,)$ the inner product on $\fh^*$ induced by
the Cartan-Killing form of $\g$. The following relations hold
\begin{equation}\label{E:chevfull}
\begin{aligned}\
[H^\a,H^\b]&=0,\quad
\\
[H^\a,E^{\pm\b}]&=\pm2\frac{(\b,\a)}{(\b,\b)}E^{\pm \a},
\\
[H,E^{\a}]&=\a(H)E^\a,\quad H\in\fh,
\\
[E^\a,E^{\b}]&=
\begin{cases}
H^\a, &\a\in\Delta_+,\ \b=-\a,
\\
-H^\a, &\a\in\Delta_-,\ \b=-\a,
\\
\pm(r+1)E^{\a+\b},&\a,\ \b,\ \a+\b\in\Delta,
\\
0,&\text{otherwise.}
\end{cases}
\end{aligned}
\end{equation}
Here $r$ is the largest nonnegative integer such that $\a-r\b$
still is a root.

\medskip
As in the other parts of this article, we denote by $E_{n,s}^\a$,
$H_{n,s}^\a$ the unique elements in $\laxg_n$ (i.e. of degree $n$) 
for which the
expansions at $P_s$ start with $E^\a z_s^n$ and $H^\a z_s^n$ respectively,
and at the Points $P_i\in I$, $i\ne s$ it is of higher order.

The following elements form a basis of $\laxg$:
\begin{equation}\label{E:laxb}
\{\;E_{n,s}^\a, \alpha\in\Delta;\  H_{n,s}^i, 
1\le i\le p\mid\ n\in\Z, s=1,\ldots, N\;\}.
\end{equation}
The structure equations, up to higher degree terms, are
\begin{equation}\label{E:chevlax}
\begin{aligned}\
[H_{n,s}^\a,H_{m,r}^\b]&\equiv0,\quad
\\
[H_{n,s}^\a,E_{m,r}^{\pm\b}]
&\equiv\pm2\frac{(\b,\a)}{(\b,\b)}E_{n+m,r}^{\pm\b}
\,\delta_r^s,
\\
[H_{n,s},E_{m,r}^{\a}]&\equiv\a(H)E_{n+m,r}^\a
\,\delta_r^s
,\quad H\in\fh,
\\
[E_{n,s}^\a,E_{m,r}^{\b}]&\equiv
\begin{cases}
H_{n+m,s}^\a
\,\delta_r^s
, &\a\in\Delta_+,\ \b=-\a,
\\
-H_{n+m,s}^\a
\,\delta_r^s,
 &\a\in\Delta_-,\ \b=-\a,
\\
\pm(r+1)E_{n+m,s}^{\a+\b}
\,\delta_r^s,
,&\a,\ \b,\ \a+\b\in\Delta,
\\
0,&\text{otherwise.}
\end{cases}
\end{aligned}
\end{equation}
Recall that the symbol $\equiv$ denotes equality up to elements of
degree higher than the sum of the degrees of the elements under
consideration. Here,  the elements not written down  are elements
of degree $>n+m$. Also recall that by the almost-gradedness there
exists a $S$, independent of $n$ and $m$, such that only elements
of degree $\le n+m+S$ appear.

\medskip

Let $\ga'$ be a  cocycle for $\laxg$ which is bounded from above.
For the elements in $\g$ we get
\begin{equation}
E^{\pm\a}=\pm 1/2[H^\a,E^{\pm\a}], \quad H^i=[E^{\alpha_i},
E^{-\alpha_i}], \ i=1,\ldots, p.
\end{equation}
Consequently, for $\laxg$ we obtain
\begin{equation}\label{E:ehn}
\begin{aligned}
E_{n,s}^{\pm\a}&=\pm 1/2[H_{0,s}^\a,E_{n,s}^{\pm\a}]+Y(n,s,\a),
\\
H_{n,s}^i&=[E_{0,s}^{\alpha_i}, E_{n,s}^{-\alpha_i}]+Z(n,s,i), \ i=1,\ldots, p.
\end{aligned}
\end{equation}
where $Y(n,s,\a)$ and $Z(n,s,i)$ are sums of elements of degree
between $n+1$ and $n+S$. Fix a number  $M\in Z$ such that the
cocycle $\ga'$ vanishes for all levels $\ge M$. We define a linear
map $\Phi:\laxg\to\C$ by (descending) induction on the degree of
the basis elements \refE{laxb}. First
\begin{equation}\label{E:ehni}
\Phi(E_{n,s}^{\a}):=\Phi(H_{n,s}^i):=0,\qquad \a\in\Delta,\
i=1,\ldots,p,
\ s=1,\ldots, N
\quad n\ge M.
\end{equation}
Next we define inductively ($\a\in\Delta_+$, $s=1,\ldots, N$)
\begin{equation}
\begin{aligned}\label{E:bel}
\Phi(E_{n,s}^{\pm \a})&:=\pm 1/2\ga'(H_{0,s}^\a,E_{n,s}^{\pm
a})+\Phi(Y(n,s,\pm\a)),
\\
\Phi(H_{n,s}^i)&:=\ga'(E_{0,s}^{\a_i},E_{n,s}^{-\a_i})+\Phi(Z(n,s,i)).
\end{aligned}
\end{equation}
The cocycle  $\ga=\ga'-\delta\Phi$ is cohomologous to the original
cocycle $\ga'$. As $\ga'$ is bounded from above, and,  by
definition, $\Phi$ is also bounded from above, the cocycle $\ga$
is  bounded from above too.

By the construction of $\Phi$ we have
$\Phi([H_{0,s}^\a,E_{n,s}^{\pm\a}]=\ga'(H_{0,s}^\a,E_{n,s}^{\pm\a})$ and
$\Phi([E_{0,s}^{\a_i},E_{n,s}^{-\a_i}])=\ga'(E_{0,s}^{\a_i},E_{n,s}^{-\a_i})$.
Hence
\begin{proposition}\label{P:p1}
\begin{equation}\label{E:p1}
\begin{gathered}
\ga(H_{0,s}^\a,E_{n,s}^{\pm\a})=0,\quad
\ga(E_{0,s}^{\a_i},E_{n,s}^{-\a_i})=0,
\\
\a\in\Delta_+,\
i=1,\ldots,p,\ s=1,\ldots, N,\quad n\in\Z.
\end{gathered}
\end{equation}
\end{proposition}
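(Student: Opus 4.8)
The plan is to compute $\ga$ directly on the two families of pairs via the relation $\ga=\ga'-\delta\Phi$ and reduce everything to the two bracket identities already isolated just above the statement. First I would fix the coboundary convention used in the paper, namely $\delta\Phi(L,L')=\Phi([L,L'])$, so that $\ga(L,L')=\ga'(L,L')-\Phi([L,L'])$ for all $L,L'\in\gb$. With this in hand the claim \refE{p1} is equivalent to the two identities
\begin{equation*}
\Phi([H_{0,s}^{\a},E_{n,s}^{\pm\a}])=\ga'(H_{0,s}^{\a},E_{n,s}^{\pm\a}),
\qquad
\Phi([E_{0,s}^{\a_i},E_{n,s}^{-\a_i}])=\ga'(E_{0,s}^{\a_i},E_{n,s}^{-\a_i}),
\end{equation*}
which are exactly the two equalities stated in the paragraph preceding the proposition. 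Thus the whole proof amounts to verifying these, after which subtraction yields $\ga=0$ on the indicated pairs.

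Second, I would verify the first identity by a short substitution. From the almost-graded Chevalley relations \refE{chevlax} (originating in \refE{chev}) the expansion \refE{ehn} rearranges to $[H_{0,s}^{\a},E_{n,s}^{\pm\a}]=\pm2\,(E_{n,s}^{\pm\a}-Y(n,s,\pm\a))$, where $Y(n,s,\pm\a)$ collects the higher-degree correction terms. Applying the linear map $\Phi$ and inserting the inductive definition \refE{bel} of $\Phi(E_{n,s}^{\pm\a})$, the contributions $\mp2\,\Phi(Y(n,s,\pm\a))$ produced by the bracket cancel exactly against the $\pm2\,\Phi(Y(n,s,\pm\a))$ built into \refE{bel}, leaving precisely $\ga'(H_{0,s}^{\a},E_{n,s}^{\pm\a})$. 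The second identity follows by the same mechanism, rewriting \refE{ehn} as $[E_{0,s}^{\a_i},E_{n,s}^{-\a_i}]=H_{n,s}^i-Z(n,s,i)$, applying $\Phi$, and using the definition of $\Phi(H_{n,s}^i)$ so that the two $\Phi(Z(n,s,i))$ terms cancel.

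Finally, I would note why these manipulations are legitimate. The map $\Phi$ is well defined by descending induction on degree: the correction terms $Y(n,s,\pm\a)$ and $Z(n,s,i)$ are finite sums of basis elements of degree strictly larger than $n$, on which $\Phi$ is already specified, with the induction anchored by the base case \refE{ehni} above the level $M$ where $\ga'$ vanishes. The main (and essentially only) obstacle is the bookkeeping caused by this self-referential structure: in each identity the term $\Phi(Y)$ (respectively $\Phi(Z)$) appears twice---once from applying $\Phi$ to the bracket through \refE{ehn}, once from the definition \refE{bel}---and one must check that these two occurrences cancel with the correct coefficient. Since \refE{bel} is engineered precisely to produce this cancellation, the verification is a direct computation and poses no genuine difficulty.
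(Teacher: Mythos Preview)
Your proof is correct and follows exactly the same approach as the paper: the paper simply asserts that ``by the construction of $\Phi$'' the two identities $\Phi([H_{0,s}^\a,E_{n,s}^{\pm\a}])=\ga'(H_{0,s}^\a,E_{n,s}^{\pm\a})$ and $\Phi([E_{0,s}^{\a_i},E_{n,s}^{-\a_i}])=\ga'(E_{0,s}^{\a_i},E_{n,s}^{-\a_i})$ hold, whence the proposition is immediate from $\ga=\ga'-\delta\Phi$. You have merely unpacked the phrase ``by construction'' into the explicit substitution and cancellation, which is exactly what the paper intends.
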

\begin{definition}\label{D:norc}
A cocycle $\ga$ is called \emph{normalized} if it fulfills
\refE{p1}. 
\end{definition}

By the above construction we
showed that every cocycle bounded from above is
cohomologous to a normalized one, which is also bounded from
above. In the following we assume that our cocycle is already
normalized.

\begin{proposition}\label{P:summary}
Let $\a_1$ be a fixed simple root, $\alpha$ and $\beta$ arbitrary
roots
 and $\ga$ a normalized cocycle, then for all $s,r=1,\ldots, N$,  
 $n,m\in\Z$
we have
\noindent
\begin{equation}
\begin{aligned}
\ga(E_{m,s}^\a,H_{n,r})&\equiv 0, \qquad\qquad  H\in\fh,\  \a\in\Delta
\\
\ga(E_{m,s}^\a,E_{n,r}^\b)&\equiv 0, \qquad \qquad \a,\b\in\Delta, \
\b\ne-\a,
\\
\ga(E_{m,r}^\a,E_{n,s}^{-\a})&\equiv
u\ga(H_{m,r}^{\a_1},H_{n,r}^{\a_1})\,
\delta_r^s, \qquad
\a\in\Delta,
\\
\ga(H_{m,r}^\a,H_{n,s}^{\b})&\equiv
t\ga(H_{m,r}^{\a_1},H_{n,r}^{\a_1})
\delta_r^s,, \qquad
\a,\b\in\Delta_+,
\end{aligned}
\end{equation}
with $u,t\in\C$.
\begin{equation}\label{E:hn0}
\ga(H_{n,r}^{\a_1},H_{0,s}^{\a_1})\equiv 0,
\end{equation}
\begin{equation}\label{E:hnrec}
\ga(H_{n+1,s}^{\a_1},H_{l-(n+1),r}^{\a_1})\equiv
\left(\ga(H_{n-1,s}^{\a_1},H_{l-(n-1),s}^{\a_1})+ 2\ga(H_{1,s}^{\a_1},
H_{l-1,s}^{\a_1)}\,
\right)\delta_r^s.
\end{equation}
For a simple root $\a_1$ and for a level $l\ne 0$ we have
\begin{equation}\label{E:hnz}
\ga(H_{n,r}^{\a_1},H_{l-n,s}^{\a_1})\equiv 0.
\end{equation}
\end{proposition}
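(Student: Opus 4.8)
The plan is to follow the Garland-type recursive scheme of \cite{SSlax}, exploiting throughout that $\ga$ is normalized (\refP{p1}, i.e. it satisfies \refE{p1}) and, by \refP{zerobound}, is already bounded from above by zero. The only genuinely new feature compared with the two-point situation is the presence of the Kronecker factors $\delta_r^s$ in the structure equations \refE{chevlax}: a bracket of a degree-$n$ element based at $P_s$ with a degree-$m$ element based at $P_r$ is, modulo higher degree, nonzero only when $r=s$. Consequently, whenever two second indices disagree the cocycle identity \refE{cohcoc} produces only expressions of level strictly above the one under consideration, which vanish by boundedness; this is what accounts for every $\delta_r^s$ in the statement and lets me treat each point $P_s$ separately.

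First I would establish the four reduction congruences. Writing each Chevalley generator as a bracket as in \refE{ehn} (namely $E^{\pm\a}=\pm\frac12[H^\a,E^{\pm\a}]$ and $H^\a=[E^\a,E^{-\a}]$), I apply \refE{cohcoc} and substitute the leading terms of \refE{chevlax}. For an $E$--$H$ pair the two surviving cyclic terms are either normalized values \refE{p1} or a Cartan--Cartan bracket that vanishes, giving the first congruence; for an $E$--$E$ pair with $\b\ne-\a$ a short induction on the height of $\a+\b$, again via \refE{cohcoc} and \refE{chevlax}, drives the value to zero. For $\b=-\a$ the relation $H^\a\equiv[E^\a,E^{-\a}]$ turns an $E$--$E$ value into a Cartan value, and the inner-product data in \refE{chevfull} (uniqueness of the invariant symmetric form on simple $\g$) expresses the Cartan value for any root in terms of the one for the fixed simple root $\a_1$, producing the constants $u,t$. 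This reduces all of $\ga$ to the single family $\ga(H_{m,s}^{\a_1},H_{n,s}^{\a_1})$.

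Next I handle the base case \refE{hn0} and the recursion \refE{hnrec}. For \refE{hn0} I write $H_{0,s}^{\a_1}\equiv[E_{0,s}^{\a_1},E_{0,s}^{-\a_1}]$, feed it into $\ga(H_{n,s}^{\a_1},H_{0,s}^{\a_1})$ and apply \refE{cohcoc}: two of the three cyclic terms vanish by normalization \refE{p1} and the third by \refP{zerobound}, so the value is $\equiv 0$. The recursion \refE{hnrec} is the \emph{crux}. Writing $H_{n+1,s}^{\a_1}\equiv[E_{1,s}^{\a_1},E_{n,s}^{-\a_1}]$, substituting into $\ga(H_{n+1,s}^{\a_1},H_{m,s}^{\a_1})$ and applying \refE{cohcoc} together with $[H^{\a_1},E^{\pm\a_1}]=\pm2E^{\pm\a_1}$, one converts the result into $E$--$E$ values, reconverts these to Cartan values by the reduction congruences just proved, and collects terms. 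The delicate point is the bookkeeping of the cyclic terms, the factors $\pm2$, and the ``modulo higher level'' convention, and this is exactly where I expect the main difficulty: a sign or index slip changes the coefficient in \refE{hnrec}, and the whole final vanishing depends on that coefficient being $2$.

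Finally, \refE{hnz} follows by solving \refE{hnrec}. For $l>0$ the value already vanishes by \refP{zerobound}, so only $l<0$ needs work. Fixing the level $l$ and the point $P_s$ and setting $f(n):=\ga(H_{n,s}^{\a_1},H_{l-n,s}^{\a_1})$, relation \refE{hnrec} reads $f(n+1)-f(n-1)=2f(1)$, whose general solution is $f(n)=f(1)\,n+d+e(-1)^n$. The base value \refE{hn0} gives $f(0)\equiv0$, i.e. $d+e=0$, while evaluating the general form at $n=1$ forces $d-e=0$; hence $d=e=0$ and $f(n)\equiv f(1)\,n$. The antisymmetry of $\ga$ gives $f(n)=-f(l-n)$, so $f(1)\,n\equiv -f(1)\,(l-n)$, that is $f(1)\,l\equiv0$; since $l\ne0$ this forces $f(1)\equiv0$ and therefore $f\equiv0$, which is \refE{hnz}.
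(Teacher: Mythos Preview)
Your overall plan is the one the paper intends: it simply refers to \cite{SSlax} and notes that the two-point arguments transfer verbatim once a second index $s$ is added and the Kronecker factors $\delta_r^s$ from \refE{chevlax} are tracked. So the approach matches.

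There is, however, a genuine gap. You invoke \refP{zerobound} several times to assert that a normalized cocycle is bounded from above by zero. That proposition lives in \refS{induction} and is proved under the hypothesis that $\ga$ is \emph{$\L$-invariant}; in \refS{direct} no $\L$-invariance is assumed---that is precisely the point of the section. Boundedness by zero for a normalized cocycle is established only in the proposition immediately \emph{after} \refP{summary}, and its proof uses \refE{hnz}. Hence appealing to it inside the proof of \refP{summary} is circular.

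Concretely this bites in two places. In your argument for \refE{hn0}, writing $H_{0,s}^{\a_1}\equiv[E_{0,s}^{\a_1},E_{0,s}^{-\a_1}]$ and applying the cocycle identity leaves, besides one term killed by \refE{p1}, the value $\ga(E_{n,s}^{\a_1},E_{0,s}^{-\a_1})$ at the same level $n$; this is not a normalization value and is not disposed of by \refP{zerobound}. A clean fix is to run the cocycle identity on the triple $(H_{0,s}^{\a_1},E_{m,s}^{\a_1},E_{n-m,s}^{-\a_1})$: the two $E$--$E$ terms cancel by antisymmetry and one is left with $\ga(H_{n,s}^{\a_1},H_{0,s}^{\a_1})\equiv 0$ directly. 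In the final step for \refE{hnz} your shortcut ``for $l>0$ use \refP{zerobound}'' is likewise unjustified, but harmless, since the recursion argument you give afterwards uses only $l\ne 0$ and covers both signs. Remove all appeals to \refP{zerobound} and work purely with the ``$\equiv$ modulo higher level'' convention; the downward induction then starts at the given bound $M$, exactly as in \cite{SSlax} and as the paper indicates.
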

\begin{proof}
In the two point case the statement of the proposition
consists of a chain of individual statement which were
proved in \cite{SSlax}. In fact, the proofs presented
there remain valid if one just adds in all relations there
for the Lie algebra elements $Y_n$ the second index to obtain 
$Y_{n,s}$.
By the almost-graded structure, resp. 
its fine structure \refE{alalg1} for the expressions $[Y_{n,s},Z_{m,r}]$
in the relations only terms involving $s=r$ will contribute on the
level under considerations. If $s\ne r$ they will contribute
only to higher level. 
Hence, all relations there can be read with respect to all the second
indices the same up to higher level.
Hence, the proof is completely  analogous.
\end{proof} 
\begin{proposition}
Let $\ga$ be a normalized cocycle. Then
\newline
(a) it vanishes for levels greater than zero, i.e. 
\begin{equation}
\ga(\gb_n,\gb_n)=0, \quad\text{for} \ n+m>0.
\end{equation}
(b) All levels $l<0$ are fixed by the level zero.
\end{proposition}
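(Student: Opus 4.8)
The plan is to adapt the descending-induction argument of \refP{zerobound}, using the reduction relations of \refP{summary} in place of the $\L$-invariance exploited there. For (a), assume $\ga\ne 0$ (otherwise there is nothing to show). Since $\ga$ is bounded from above there is a largest level $L$ at which $\ga$ does not vanish; the claim is that $L\le 0$. Suppose instead $L>0$. Every pair of basis elements \refE{laxb} of level $L$ is covered by the first four congruences of \refP{summary}: an $E$--$H$ pair, or an $E^\a$--$E^\b$ pair with $\b\ne-\a$, satisfies $\ga\equiv 0$ outright, while an $E^\a$--$E^{-\a}$ pair or an $H$--$H$ pair is congruent to a scalar multiple of a fundamental value $\ga(H^{\a_1}_{m,r},H^{\a_1}_{L-m,r})$. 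Because $L\ne 0$, relation \refE{hnz} gives $\ga(H^{\a_1}_{m,r},H^{\a_1}_{L-m,r})\equiv 0$ as well. Reading $\equiv$ literally, each cocycle value at level $L$ therefore equals a fixed linear combination of cocycle values at levels strictly above $L$, and these all vanish by the maximality of $L$. Hence $\ga$ vanishes on all basis pairs of level $L$, contradicting the choice of $L$. So $L\le 0$, which is precisely $\ga(\gb_n,\gb_m)=0$ whenever $n+m>0$, and in particular $\ga$ is bounded from above by zero.

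For (b) I would run a descending induction on the level with level $0$ as base. Part (a) furnishes the anchor: all values above level $0$ are zero. Fix $l<0$ and assume inductively that every cocycle value at each level $l'$ with $l<l'\le 0$ has been expressed through the level-zero data. By the same first four congruences of \refP{summary}, it suffices to determine the fundamental values $\ga(H^{\a_1}_{n,r},H^{\a_1}_{l-n,s})$, as every other basis pair of level $l$ is congruent to a multiple of one of these. Since $l\ne 0$, relation \refE{hnz} yields $\ga(H^{\a_1}_{n,r},H^{\a_1}_{l-n,s})\equiv 0$, so this value equals a linear combination of cocycle values at levels strictly greater than $l$. Those higher-level values are, by the induction hypothesis together with (a), already fixed by the level-zero values; hence so is the value at level $l$. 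Carrying the induction downward over all $l<0$ shows that the cocycle below level zero is completely determined by its restriction to level zero.

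The combinatorial bookkeeping is routine once \refP{summary} is available; the single point requiring care is the correct handling of the congruence symbol $\equiv$, which denotes equality only modulo cocycle values at strictly higher level. Consequently no assertion follows from one relation in isolation: the real content is that boundedness from above provides an exact anchor -- vanishing above the top level for (a), vanishing above level $0$ for (b) -- and the descending induction propagates this exactness one level at a time, converting the congruences into genuine equalities. This is exactly the mechanism of \refP{zerobound}, so I expect the final write-up to consist essentially of the two short inductions sketched above.
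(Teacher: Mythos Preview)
Your proposal is correct and follows essentially the same approach as the paper's proof: reduce every level-$l$ cocycle value via the congruences of \refP{summary} to higher levels and to the fundamental values $\ga(H^{\a_1}_{n,r},H^{\a_1}_{l-n,r})$, then use \refE{hnz} for $l\ne 0$ to push these to higher level as well, and run a descending induction anchored at the upper bound (for (a)) and at level zero (for (b)). The paper states this more tersely, but your expanded treatment of the induction and of the meaning of $\equiv$ is exactly the intended mechanism.
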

\begin{proof}
By the 
propositions above we showed that the expressions at level $l$ of
the cocycle can be reduced to expressions of  levels $> l$ and
values $\ga(H_{n,r}^\a,H_{l-n,r}^\a)$. As long as the level is $\ne 0$,  by
\refE{hnz} also these values can be expressed by higher level.
Hence by induction, starting with the upper bound of the cocycle,
we obtain that the upper bound for the level of the cocycle values
is equal to zero. Also it follows that the values at levels $l<0$
are fixed by induction by the values at level zero.
\end{proof}
 Hence it
remains to consider the level zero. 
\begin{proposition}\label{P:p51}
Let $\a$ be a simple root. At level $l=0$ the cocycle values 
for $s=1,\ldots, N$ are
given by the relations
\begin{equation}\label{E:cat0}
\ga(H_{n,s}^{\a},H_{-n,r}^\a)= n\cdot
  \ga(H_{1,s}^{\a},H_{-1,s}^\a)
\,\delta_s^r,\qquad
\ga(H_{0,r}^{\a},H_{0,s}^{\a})=0.
\end{equation}
\end{proposition}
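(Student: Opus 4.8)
The plan is to read off \refP{p51} directly from the relations already assembled in \refP{summary}, specialized to level $l=0$. The one structural input I would invoke first is the proposition immediately preceding \refP{p51}: a normalized cocycle vanishes on every pair of level $>0$. Consequently, once I restrict to pairs whose degrees sum to zero, every correction term hidden in the symbol $\equiv$ (which by definition lives at levels strictly above the level under consideration) is forced to vanish, so on level $0$ each congruence $\equiv$ becomes an honest equality $=$. This upgrade is what turns the recursive relations of \refP{summary} into exact identities. Since the fixed simple root $\a_1$ there was arbitrary, I may apply \refE{hnrec}, \refE{hn0} and \refE{hnz} with $\a_1$ replaced by the given simple root $\a$ throughout.

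First I would dispose of the off-diagonal and base cases. For $s\ne r$, the fourth relation of \refP{summary} (taken with $\b=\a$) gives $\ga(H_{n,s}^{\a},H_{-n,r}^{\a})\equiv t\,\ga(H_{n,s}^{\a},H_{-n,s}^{\a})\,\delta_s^r$ for some $t\in\C$, and the factor $\delta_s^r$ annihilates it; at level $0$ the congruence is an equality, so $\ga(H_{n,s}^{\a},H_{-n,r}^{\a})=0$ whenever $s\ne r$. This accounts for the Kronecker $\delta_s^r$ in the first asserted identity and for the case $r\ne s$ of the second. The remaining value $\ga(H_{0,s}^{\a},H_{0,s}^{\a})$ vanishes immediately by antisymmetry of $\ga$ (equivalently, it is the case $n=0$ of \refE{hn0}), which settles the second identity completely.

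It then remains to treat the diagonal values $s=r$. Here I would set $c_n:=\ga(H_{n,s}^{\a},H_{-n,s}^{\a})$ and specialize \refE{hnrec} to $l=0$, $r=s$, which, after replacing $\equiv$ by $=$, reads $c_{n+1}=c_{n-1}+2c_1$. Together with the base value $c_0=0$ (from \refE{hn0} at $n=0$), a two-step induction on $n$, separating even and odd indices with base data $c_0=0$ and $c_1$, yields $c_n=n\,c_1$ for every $n\ge 0$. To reach negative $n$ I would use antisymmetry: for $p>0$ one has $c_{-p}=\ga(H_{-p,s}^{\a},H_{p,s}^{\a})=-\ga(H_{p,s}^{\a},H_{-p,s}^{\a})=-c_p=-p\,c_1$, so $c_n=n\,c_1$ holds for all $n\in\Z$. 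Recalling that $c_1=\ga(H_{1,s}^{\a},H_{-1,s}^{\a})$, this is precisely the first asserted identity in the diagonal case.

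Since all the genuine content, namely the recursion and the vanishing statements, is already packaged in \refP{summary}, I expect no real obstacle: the proposition is essentially a bookkeeping corollary. The only point that needs care, and the one I would state explicitly, is the justification that $\equiv$ may be replaced by $=$ once the level equals $0$; this is exactly where the boundedness-from-above-by-zero of normalized cocycles, established in the preceding proposition, is used. Without it the recursion would determine the level-$0$ values only modulo higher levels rather than exactly, and the closed form $c_n=n\,c_1$ could not be concluded.
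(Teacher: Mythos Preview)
Your proposal is correct and follows essentially the same approach as the paper: specialize \refE{hnrec} to $l=0$, observe that the vanishing of normalized cocycles at positive levels upgrades $\equiv$ to $=$, and then solve the resulting exact recursion. The paper's own proof is extremely terse (it just says ``Now the claimed expression follows''), whereas you spell out the induction $c_{n+1}=c_{n-1}+2c_1$, the base case $c_0=0$, and the extension to negative $n$ via antisymmetry; your handling of the off-diagonal case via the $\delta_r^s$ in \refP{summary} is likewise just an explicit unpacking of what the paper leaves implicit.
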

\begin{proof}
If we set the value $l=0$ in \refE{hnrec}  we obtain the relation
\begin{equation}
\ga(H_{n+1,s}^\a,H_{-(n+1),r}^\a)\equiv 
\left(
\ga(H_{n-1,s}^\a,H_{-(n-1),s}^\a)+
2\ga(H_{1,s}^\a,H_{-1,s}^\a)
\right)\cdot\delta_r^s.
\end{equation}
As all cocycle values of level $l>0$ are vanishing we can
replace $\equiv$ by $=$.
Now the claimed expression follows.
\end{proof}

\begin{proof}[Proof of \refT{bounduni}]
After adding a suitable coboundary we might replace the given
$\ga$ by a normalized one. 
Using \refP{p1}, \ref{P:summary}, and \ref{P:p51}
everything depends
only on the values $\ga(H_{1,s}^{\a},H_{-1,s}^\alpha)$, $s=1,\ldots,
N$
 for one (fixed)
simple root. This proves that there are at most $N$ linearly
independent
normalized cocycles.
\end{proof}

\begin{proposition}
If a normalized cocycle $\ga$ is a coboundary then it vanishes
identically.
\end{proposition}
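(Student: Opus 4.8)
The plan is to imitate the commutator trick of \refP{nonbound}, since that is essentially the only mechanism available for forcing a coboundary to vanish on a prescribed pair: manipulating the cocycle identity alone merely relates the determining values $\ga(H^{\a_1}_{1,s},H^{\a_1}_{-1,s})$ to one another (and, via \refP{summary}, to the $E$--$E$ values) and never pins them down to $0$. By the remark following \refT{bounduni} a normalized cocycle $\ga$ is completely determined by the $N$ numbers $\ga(H^{\a_1}_{1,s},H^{\a_1}_{-1,s})$, $s=1,\ldots,N$, for the fixed simple root $\a_1$. Hence it suffices to prove that each of these vanishes whenever $\ga$ is a coboundary, i.e.\ $\ga(L,L')=\phi([L,L'])$ for some linear form $\phi$ on $\gb$.

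First I would construct, for each $s$, a pair of elements of $\gb$ that bracket to zero yet carry the value $\ga(H^{\a_1}_{1,s},H^{\a_1}_{-1,s})$. Using that $\gb$ is an $\A$-module (\refS{Amod}), set $H_{(n,s)}:=A_{n,s}\cdot H^{\a_1}_{0,s}\in\gb$, where $A_{n,s}$ is the Krichever--Novikov function and $H^{\a_1}_{0,s}$ is the degree-zero element of \refE{locex}. Since both $H_{(1,s)}$ and $H_{(-1,s)}$ are the single matrix-valued function $H^{\a_1}_{0,s}$ rescaled by scalar functions, their point-wise commutator is $A_{1,s}A_{-1,s}\,[H^{\a_1}_{0,s},H^{\a_1}_{0,s}]=0$ identically, exactly as in \refP{nonbound}. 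Consequently, for the coboundary, $\ga(H_{(1,s)},H_{(-1,s)})=\phi([H_{(1,s)},H_{(-1,s)}])=\phi(0)=0$.

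Next I would identify $\ga(H_{(1,s)},H_{(-1,s)})$ with the determining value. By the fine structure of the $\A$-action (\refS{Amod}) one has $H_{(1,s)}=H^{\a_1}_{1,s}+L$ with $L\in\gb_{(2)}$ and $H_{(-1,s)}=H^{\a_1}_{-1,s}+L'$ with $L'\in\gb_{(0)}$. Expanding $\ga(H_{(1,s)},H_{(-1,s)})$ bilinearly produces $\ga(H^{\a_1}_{1,s},H^{\a_1}_{-1,s})$ together with the cross terms $\ga(H^{\a_1}_{1,s},L')$, $\ga(L,H^{\a_1}_{-1,s})$ and $\ga(L,L')$; decomposing $L,L'$ into homogeneous components, every such term pairs elements whose total degree is positive (respectively $\ge1$, $\ge1$, $\ge2$), and a normalized cocycle vanishes on all pairs of positive total degree (shown above). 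Thus $\ga(H_{(1,s)},H_{(-1,s)})=\ga(H^{\a_1}_{1,s},H^{\a_1}_{-1,s})$, and combining with the previous paragraph yields $\ga(H^{\a_1}_{1,s},H^{\a_1}_{-1,s})=0$ for every $s$; since these values determine $\ga$, it vanishes identically.

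The step I expect to require the most care is the degree bookkeeping behind the identity $\ga(H_{(1,s)},H_{(-1,s)})=\ga(H^{\a_1}_{1,s},H^{\a_1}_{-1,s})$: one must check that membership of the correction terms in $\gb_{(2)}$ and $\gb_{(0)}$ really forces each cross term to sit at strictly positive level (so the established vanishing of $\ga$ above level zero applies), and that $A_{n,s}\cdot H^{\a_1}_{0,s}$ genuinely lies in $\gb$ with the claimed leading term rather than merely in the ambient space of matrix-valued functions. Both follow from the $\A$-module statements of \refS{Amod}, but they are the places where the argument could break, so I would verify them explicitly before assembling the conclusion.
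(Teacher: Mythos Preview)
Your proposal is correct and follows essentially the same approach as the paper: define $H_{(n,s)}:=A_{n,s}\cdot H^{\a_1}_{0,s}$, use that $[H_{(1,s)},H_{(-1,s)}]=0$ so the coboundary vanishes on this pair, and then invoke the vanishing of a normalized cocycle at positive levels to identify $\ga(H_{(1,s)},H_{(-1,s)})$ with the determining value $\ga(H^{\a_1}_{1,s},H^{\a_1}_{-1,s})$. Your write-up is in fact more explicit about the degree bookkeeping than the paper's own proof.
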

\begin{proof}
As explained above, a normalized cocycle is fixed by the values
$\ga(H_{1,s}^{\a},H_{-1,s}^\alpha)$. 
We set
\begin{equation}
H_{(1,s)}^\a:=H_{0,s}^\a A_{1,s}\equiv
H^\a_{1,s},\quad \text{ and }\quad 
H_{(-1,s)}^\a:=H_{0,s}^\a A_{-1,s}\equiv H^\a_{-1,s}. 
\end{equation}
Hence
\begin{equation}
[H_{(1,s)}^\a,H_{(-1,s)}^\a]=[H_{0,s}^\a,H_{0,s}^\a]A_{1,s}A_{-1,s}=0.
\end{equation}
As the cocycle vanishes for positive levels, and as
$\ga=\delta\phi$ is assumed to be a coboundary we get
\begin{equation}
\ga(H_{1,s}^{\a},H_{-1,s}^\alpha)= \ga(H_{(1,s)}^{\a},H_{(-1,s)}^\alpha)
=\phi([H_{(1,s)}^\a,H_{(-1,s)}^\a])=\phi(0)=0.
\end{equation}
Hence, all cocycle values are zero, as claimed.
\end{proof}

\appendix
\section{Example  $\gl(n)$}\label{S:example}
In this appendix 
we will reproduce  as an illustration for the reader the proof
that the product of two Lax operators for the algebra $\gl(n)$ 
is again a Lax operator. This means that the equations
\refE{gldef} are fulfilled for their product. 
Hence, $\glb(n)$ will be closed under
commutator too.
This result is due to Krichever and Sheinman  \cite{KSlax}.
In a similar manner the other cases are treated (but now only for the
commutators). Furthermore,  it is shown that the connection 
operators $\nabla_{e}^{(\omega)}$  act indeed on $\gb$.
The original proofs (involving 
partly tedious calculations) can be found in \cite{KSlax}, \cite{SSlax}, 
\cite{SheBo}.

The singularities at the points in $A$ are not  
bounded. Hence, they will not create problems and the proofs need  only to 
consider the weak singular points. Consequently, the statements are
also true in the multi-point case.

\bigskip

We start with two elements $L'$ and $L''$ with corresponding  expansions 
\refE{glexp} and examine their product $L=L'L''$.
 For this we have to consider each point  
$\gamma_s$  (with local coordinate $w_s$) of
 the weak singularities with $\a_s\ne 0$ 
separately. 
Taking into account only those parts which might contribute
we obtain for $L$
\begin{multline}
L=\frac {L'_{s,-1}L''_{s,-1}}{w_s^2}
+
\frac {L'_{s,-1}L''_{s,0}
+L'_{s,0}L''_{s,-1}}{w_s^1}
\\
+\left({L'_{s,-1}L''_{s,1}}+
{L'_{s,0}L''_{s,0}}+
{L'_{s,1}L''_{s,-1}}\right)+ O(w_s^1).
\end{multline}
By expanding  the first numerator
we get
\begin{equation}
L'_{s,-1}L''_{s,-1}=\al_s{\beta_s'}^t\al_s{\beta_s''}^t=0
\end{equation}
as ${\beta_s'}^t\al_s=0 $ by \refE{gldef}.
Hence, there is no pole of order two appearing.

Next we consider the expression which comes with pole order one.
\begin{equation}
L_{s,-1}=
L'_{s,-1}L''_{s,0}+L'_{s,0}L''_{s,-1}
= \alpha_s{\beta_s'}^tL''_{s,0}+
L_{s,0}'\alpha_s{\beta_s''}^t.
\end{equation}
As by the conditions $L'_{s,0}\alpha_s=\kappa_s'\alpha_s$
we can write
\begin{equation}
L_{s,-1}=\alpha_s\beta_s^t,\qquad
\text{with}\quad 
\beta_s^t={\beta_{s}'}^tL_{s,0}''+\kappa_s'{\beta_s''}^t.
\end{equation}
For the trace condition we obtain
\begin{equation}
\tr(L_{s,-1})=({\beta_{s}'}^tL_{s,0}''+\kappa_s'{\beta_s''}^t)
\alpha_s=
\kappa_s''{\beta_s'}^t\alpha_s+
\kappa_s'{\beta_s''}^t\alpha_s=0.
\end{equation}
Hence, we have the required form.

Finally we have to verify that $\alpha_s$ is an eigenvector of
$L_{s,0}$. First we note that $L_{s,-1}''\alpha_s=0$ and 
$L_{s,0}'L_{s,0}''\alpha_s=\kappa_s'\kappa_s''\alpha_s$. Also
\begin{equation}
L_{s,-1}'L_{s,1}''\alpha_s=\alpha_s({\beta_s'}^tL_{s,1}''\alpha_s)
=({\beta_s'}^tL_{s,1}''\alpha_s)\alpha_s.
\end{equation}
Hence, indeed $\alpha_s$ is an eigenvector with eigenvalue
${\beta_s'}^tL_{s,1}''\alpha_s+\kappa_s'\kappa_s''$. This shows the
claim
that $L\in\glb(n)$.


\end{document}